\crefname{lem}{Lemma}{Lemmas}
\crefname{thm}{Theorem}{Theorems}
\crefname{cor}{Corollary}{Corollaries}
\crefname{prop}{Proposition}{Propositions}
\crefname{conj}{Conjecture}{Conjectures}
\crefname{open}{Open Problem}{Open Problems}
\setlist[itemize]{topsep=0ex,itemsep=0ex,parsep=0.25ex}
\setlist[enumerate]{topsep=0ex,itemsep=0ex,parsep=0.25ex}
\newcommand{\defn}[1]{\textcolor{Maroon}{\emph{#1}}}
\newcommand{\WW}{\mathcal{W}}
\newcommand{\GG}{\mathcal{G}}
\newcommand{\OO}{\mathcal{O}}
\newcommand{\NN}{\mathbb{N}}
\newcommand{\RR}{\mathbb{R}}
\newcommand{\ZZ}{\mathbb{Z}}
\def\NAT@spacechar{~}
\renewcommand{\baselinestretch}{1.1}
\DeclarePairedDelimiter{\abs}{\lvert}{\rvert}
\DeclarePairedDelimiter{\floor}{\lfloor}{\rfloor}
\DeclarePairedDelimiter{\set}{\{}{\}}
\renewcommand{\le}{\leqslant}
\renewcommand{\geq}{\geqslant}
\renewcommand{\leq}{\leqslant}
\renewcommand{\emptyset}{\varnothing}
\renewcommand{\epsilon}{\varepsilon}
\DeclareMathOperator{\dist}{dist}
\DeclareMathOperator{\tw}{tw}
\DeclareMathOperator{\pw}{pw}
\DeclareMathOperator{\CR}{cr}
\DeclareMathOperator{\rad}{rad}
\newcommand{\hajos}{h_{\textnormal{top}}}
\renewcommand{\hajos}{h'}
\renewcommand{\thefootnote}{\fnsymbol{footnote}}
\theoremstyle{plain}
\newtheorem{thm}{Theorem}
\newtheorem{lem}[thm]{Lemma}
\newtheorem*{claim}{Claim}
\newtheorem{cor}[thm]{Corollary}
\newtheorem{prop}[thm]{Proposition}
\begin{document}

\title{\bf\Large Treewidth, Circle Graphs and Circular Drawings}
\author{%
Robert Hickingbotham\footnotemark[1] \qquad
Freddie Illingworth\footnotemark[2] \\
Bojan Mohar\footnotemark[3] \qquad
David R. Wood\footnotemark[1]}
\date{}
\maketitle
\begin{abstract}
A circle graph is an intersection graph of a set of chords of a circle. We describe the unavoidable induced subgraphs of circle graphs with large treewidth. This includes examples that are far from the `usual suspects'. Our results imply that treewidth and Hadwiger number are linearly tied on the class of circle graphs, and that the unavoidable induced subgraphs of a vertex-minor-closed class with large treewidth are the usual suspects if and only if the class has bounded rank-width. Using the same tools, we also study the treewidth of graphs $G$ that have a circular drawing whose crossing graph is well-behaved in some way. In this setting, we show that if the crossing graph is $K_t$-minor-free, then $G$ has treewidth at most $12t-23$ and has no $K_{2,4t}$-topological minor. On the other hand, we show that there are graphs with arbitrarily large Hadwiger number that have circular drawings whose crossing graphs are $2$-degenerate.
\end{abstract}

\footnotetext[0]{\today. MSC classification: 05C83 graph minors, 
05C10 geometric and topological aspects of graph theory,
05C62 geometric and intersection graph representations}

\footnotetext[1]{School of Mathematics, Monash University, Melbourne, Australia  (\textsf{\{\href{mailto:robert.hickingbotham@monash.edu}{robert.hickingbotham},\allowbreak \href{mailto:david.wood@monash.edu}{david.\allowbreak wood}\}@monash.edu}). Research of R.H.\ is supported by an Australian Government Research Training Program Scholarship. Research of D.W\ is supported by the Australian Research Council. }

\footnotetext[2]{Mathematical Institute, University of Oxford, United Kingdom (\textsf{\href{mailto:illingworth@maths.ox.ac.uk}{illingworth@maths.ox.ac.uk}}). Research supported by EPSRC grant EP/V007327/1.}

\footnotetext[3]{Department  of Mathematics, Simon Fraser University, Burnaby, BC, Canada (\textsf{\href{mailto:mohar@sfu.ca}{mohar@sfu.ca}}). Research supported in part by the NSERC Discovery Grant R611450 (Canada).}

 \newpage
\renewcommand{\thefootnote}{\arabic{footnote}}

\section{Introduction}

This paper studies the treewidth of graphs that are defined by circular drawings. Treewidth is the standard measure of how similar a graph is to a tree, and is of fundamental importance in structural and algorithmic graph theory; see \citep{Reed03,HW17,Bodlaender98} for surveys. The motivation for this study is two-fold. See \cref{SectionPrelim} for definitions omitted from this introduction. 

\subsection{Theme \#1: Circle Graphs}

A \defn{circle graph} is the intersection graph of a set of chords of a circle. Circle graphs form a widely studied graph class~\citep{GKMW23,KK97,dF84,DM21,Davies22a,DGS14,Kloks96} and there have been several recent breakthroughs concerning them. In the study of graph colourings, \citet{DM21} showed that circle graphs are quadratically $\chi$-bounded improving upon a previous longstanding exponential upper bound. \citet{Davies22a} further improved this bound to $\chi(G) \in \OO(\omega(G)\log\omega(G))$, which is best possible. Circle graphs are also fundamental to the study of vertex-minors and are conjectured to lie at the heart of a global structure theorem for vertex-minor-closed graph classes (see \citep{McCarty21}). To this end, \Citet*{GKMW23} recently proved an analogous result to the excluded grid minor theorem for vertex-minors using circle graphs. In particular, they showed that a vertex-minor-closed graph class has bounded rankwidth if and only if it excludes a circle graph as a vertex-minor. For further motivation and background on circle graphs, see \citep{Davies22,McCarty21}.

Our first contribution essentially determines when a circle graph has large treewidth. 

\begin{restatable}{thm}{circlelargetw}\label{circlelargetw}
     Let $t\in\NN$ and let $G$ be a circle graph with treewidth at least $12t + 2$. Then $G$ contains an induced subgraph $H$ that consists of\/ $t$ vertex-disjoint cycles $(C_1, \dotsc, C_t)$ such that for all $i<j$ every vertex of $C_i$ has at least two neighbours in $C_j$. Moreover, every vertex of $G$ has at most four neighbours in any $C_i$ $(1\le i\le t)$.
\end{restatable}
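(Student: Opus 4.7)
The plan is to work with a chord representation $\mathcal{D}$ of $G$ and extract the cycles inductively. Because the domination condition is ``every vertex of the outer cycle $C_i$ has at least two neighbours in the inner cycle $C_j$,'' the natural order is to peel off the cycles in reverse $C_t, C_{t-1}, \dots, C_1$: once $C_{i+1}, \dots, C_t$ have been fixed, we choose $C_i$ from the current subgraph so that every vertex of $C_i$ has at least two neighbours in each already-chosen $C_j$, while leaving enough treewidth for subsequent rounds.

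The key intermediate lemma I aim to establish is the following: if $H$ is an induced subgraph of $G$ with $\tw(H) \geq 12s + 2$, and $D_1, \dots, D_k$ are pairwise vertex-disjoint induced cycles in $G - V(H)$, then $H$ contains an induced cycle $C$ and an induced subgraph $H' \subseteq H - V(C)$ satisfying (i) $\tw(H') \geq 12(s-1) + 2$, (ii) every vertex of $V(C)$ has at least two neighbours in each $D_\ell$, and (iii) every vertex of $V(G)$ has at most four neighbours in $C$. Applying this lemma $t$ times with $s = t, t-1, \dots, 1$, starting from $H = G$ with no previously chosen cycles, yields $C_t, C_{t-1}, \dots, C_1$ with all the required properties: condition (ii) at step $i$ supplies the $\geq 2$ neighbours from $C_i$ into each $C_j$ with $j > i$, while condition (iii) supplies the four-neighbour cap for every vertex of $G$ against every $C_i$.

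For the lemma itself, large treewidth of $H$ translates, via a bramble or tangle argument tailored to chord diagrams, into a dense interleaving region of $\mathcal{D}$ through which many chords of $H$ pass. Inside this region I extract an induced cyclic sequence of chords to serve as $C$, chosen so that each chord of $C$ sits inside an arc that is crossed many times by each $D_\ell$ (giving (ii)), and so that $C$ is ``thin'' enough that no chord of the whole diagram can cross more than four chords of $C$ (giving (iii)); the remainder $H'$ is the induced subgraph on the chords packed inside the enclosed sub-region, which automatically have two crossings with $C$. The treewidth of $H'$ stays at least $12(s-1)+2$ after three trimmings (passing to an induced cycle, restricting to $D_\ell$-compatible chords, thinning), each of which costs a bounded additive factor; these three losses are what produce the coefficient $12$.

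The principal obstacle is balancing (ii) and (iii) simultaneously: chords that cross the previously chosen $D_\ell$'s many times tend to be long transversal chords, which are precisely those that risk being crossed too many times by outside chords, conflicting with thinness. Resolving this tension inside a single dense region of the chord diagram — and, crucially, showing that the treewidth loss at each step remains a constant $12$ independent of the number $k$ of previously chosen cycles — is the technical heart of the argument and is where the specific combinatorics of chord diagrams (rather than general intersection representations) must be exploited.
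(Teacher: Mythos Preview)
Your key intermediate lemma is false as stated. You assume only that $D_1,\dots,D_k$ are pairwise vertex-disjoint induced cycles in $G-V(H)$, with no structural relationship between $H$ and the $D_\ell$. But then nothing forces any chord of $H$ to cross any chord of any $D_\ell$: in the chord diagram, take a tiny arc $A$ disjoint from every chord of $H$, and let $D_1$ be three pairwise crossing chords with all endpoints in $A$. Then $D_1$ is an induced triangle in $G-V(H)$, yet no vertex of $H$ (and hence no vertex of any cycle $C\subseteq H$) has even one neighbour in $D_1$. What you actually need to carry through the induction is an invariant such as ``each $D_\ell$ encircles $H$'' (every chord of $H$ crosses at least two chords of $D_\ell$), and you have to show this invariant is re-established for the new pair $(H',C)$. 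Your proposal gestures at this with ``chords packed inside the enclosed sub-region,'' but never states or proves the invariant, and the acknowledged ``principal obstacle'' of reconciling (ii) and (iii) with a loss of only $12$ in treewidth, independently of $k$, is left entirely open.

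The paper's route is different and avoids the iterative treewidth bookkeeping altogether. It passes through the \emph{map graph} $M_D$ of the circular drawing $D$ realising $G\cong X_D$: first, a general bound $\tw(X_D)\le 6\rad(M_D)+7$ (obtained via the planarisation and the classical radius--treewidth inequality for planar graphs) converts $\tw(G)\ge 12t+2$ into $\rad(M_D)\ge 2t$. Then one fixes a face at distance $\ge 2t$ from the outer face, picks a point $p$ in it, and works with the rays $R_\theta$ from $p$. A set of chords is \emph{dominant} if it meets every $R_\theta$; a minimally dominant set is shown in one stroke to (a) induce a cycle in $X_D$, (b) be crossed at most four times by any chord of $G$, and (c) meet each ray at most twice. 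One then greedily peels off minimally dominant sets $E_1,\dots,E_t$, each consisting of chords maximal among those remaining; the ``$\ge 2$ neighbours'' condition from $C_i$ into $C_j$ for $i<j$ falls out because $E_i$'s chords are maximal and $E_j$ is dominant. The counting that drives the iteration is ``edges crossing each ray drops by at most $2$,'' not ``treewidth drops by at most $12$,'' and the factor $12$ appears only once, in the treewidth--radius conversion. This is precisely the unifying geometric concept your proposal is missing.
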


Observe that in \cref{circlelargetw} the subgraph $H$ has a $K_t$-minor obtained by contracting each of the cycles $C_i$ to a single vertex, implying that $H$ has treewidth at least $t - 1$. Moreover, since circle graphs are closed under taking induced subgraphs, $H$ is also a circle graph. We now highlight several consequences of \Cref{circlelargetw}.

First, \Cref{circlelargetw} describes the unavoidable induced subgraphs of circle graphs with large treewidth. Recently, there has been significant interest in understanding the induced subgraphs of graphs with large treewidth \citep{LR22,ACV22,ACHS22a,ACHS22b,AACHSV22,ACDHRSV21,ACHS22,PSTT21,ST21,BBDEGHTW,AACHS22}. To date, most of the results in this area have focused on graph classes where the unavoidable induced subgraphs are the following graphs, the \defn{usual suspects}: a complete graph $K_t$, a complete bipartite graph $K_{t, t}$, a subdivision of the ($t \times t$)-wall, or the line graph of a subdivision of the ($t \times t$)-wall (see \cite{ST21} for definitions). Circle graphs do not contain subdivisions of large walls nor the line graphs of subdivisions of large walls and there are circle graphs of large treewidth that do not contain large complete graphs nor large complete bipartite graphs (see \cref{CircleGraphstwBounded}). To the best of our knowledge this is the first result to describe the unavoidable induced subgraphs of the large treewidth graphs in a natural hereditary class when they are not the usual suspects. Later we show that the unavoidable induced subgraphs of graphs with large treewidth in a vertex-minor-closed class $\GG$ are the usual suspects if and only if $\GG$ has bounded rankwidth (see \cref{boundedrankwidth}). 

Second, the subgraph $H$ in \Cref{circlelargetw} is an explicit witness to the large treewidth of $G$ (with only a multiplicative loss). Circle graphs being $\chi$-bounded says that circle graphs with large chromatic number must contain a large clique witnessing this. \Cref{circlelargetw} can therefore be considered to be a treewidth analogue to the $\chi$-boundedness of circle graphs. We also prove an analogous result for circle graphs with large pathwidth (see \cref{CircleGraphLargePW}).

Third, since the subgraph $H$ has a $K_t$-minor, it follows that every circle graph contains a complete minor whose order is at least one twelfth of its treewidth. This is in stark contrast to the general setting where there are $K_5$-minor-free graphs with arbitrarily large treewidth (for example, grids). \Cref{circlelargetw} also implies the following relationship between the treewidth, Hadwiger number and Haj\'{o}s number of circle graphs (see \cref{SectionCircleStructure})\footnote{For a graph class $\GG$, two graph parameters $\alpha$ and $\beta$ are \defn{tied on $\GG$} if there exists a function $f$ such that $\alpha(G)\leq f(\beta(G)) \text{ and } \beta(G)\leq f(\alpha(G))$ for every graph $G\in \GG$. Moreover, $\alpha$ and $\beta$ are \defn{quadratically/linearly tied on $\GG$} if $f$ may be taken to be quadratic/linear.}.

\begin{restatable}{thm}{Tied}\label{Tied}
    For the class of circle graphs, the treewidth and Hadwiger number are linearly tied. Moreover, the Haj\'{o}s number is quadratically tied to both of them. Both `linear' and `quadratic' are best possible.
\end{restatable}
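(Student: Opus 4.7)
The plan is to combine \cref{circlelargetw} with the universal inequalities $\hajos(G) \le h(G) \le \tw(G)+1$ (the first because every topological minor is a minor, the second because $\tw(K_t)=t-1$), and to certify sharpness via $K_{n,n}$. For the linear tie between $\tw$ and $h$, I would take a circle graph $G$ with $\tw(G) \ge 12t+2$: \cref{circlelargetw} supplies an induced subgraph $H$ consisting of $t$ vertex-disjoint cycles $C_1,\dotsc,C_t$ with every vertex of $C_i$ having at least two neighbours in $C_j$ for $i<j$. Contracting each $C_i$ to a point produces a $K_t$-minor of $G$, so $h(G)\ge t$, and rearranging yields $\tw(G) \le 12\,h(G) + O(1)$.

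For the quadratic tie involving $\hajos$, I would push the same structure through a Kővári--Sós--Turán-style routing to build a $K_s$-subdivision with $s = \Omega(\sqrt{t})$. Pick $s$ maximal with $s+\binom{s}{2}\le t$, take $v_i \in C_i$ for $i\le s$ as branch vertices, and assign each pair $\{i,j\}\subseteq[s]$ to a distinct cycle $C_{ij}$ among $C_{s+1},\dotsc,C_t$; since both $i$ and $j$ are less than the index of $C_{ij}$, each of $v_i$ and $v_j$ has at least two neighbours in $C_{ij}$ and so can be joined by a path whose internal vertices lie entirely in $C_{ij}$. Distinct pairs are routed through disjoint cycles, so the $\binom{s}{2}$ paths are internally disjoint, giving $\hajos(G)\ge s=\Omega(\sqrt{\tw(G)})$ and hence $h(G) = O(\hajos(G)^2)$.

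Sharpness of the linear tie is immediate from $h(G)\le\tw(G)+1$. For the quadratic tie I would take the family $K_{n,n}$, which is a circle graph (place two groups of $n$ chords around a circle so that chords from different groups always cross while chords from the same group never do), with $\tw(K_{n,n})=\Theta(h(K_{n,n}))=\Theta(n)$, whereas $\hajos(K_{n,n}) = \Theta(\sqrt{n})$: a $K_s$-subdivision with $a$ branch vertices on one part of the bipartition needs at least $\binom{a}{2}+\binom{s-a}{2}$ internal vertices on the opposite part, forcing $s=O(\sqrt{n})$. The main obstacle in the whole argument is verifying the $K_s$-subdivision routing; once the highway assignment is fixed, the internal-disjointness is pure bookkeeping because branch vertices lie outside every highway cycle and distinct pairs use distinct highways.
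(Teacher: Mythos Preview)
Your proposal is correct and follows essentially the same approach as the paper. The paper packages the argument slightly differently, citing \cref{CrossingGraphParams} and \cref{RadiusHajos} (which go through $\rad(M_D)$ as an intermediary), but the substance is identical: contract the cycles from the structure in \cref{circlelargetw} to obtain the $K_t$-minor, and for the Haj\'{o}s bound route each pair of branch vertices $v_i\in C_i$, $v_j\in C_j$ through its own dedicated cycle $C_{\phi(ij)}$ of higher index, exactly as you describe. The sharpness witness $K_{n,n}$ is also the same.
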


\subsection{Theme \#2: Graph Drawing}

The second thread of this paper aims to understand the relationship between circular drawings of graphs and their crossing graphs. A \defn{circular drawing} (also called \defn{convex} drawing) of a graph places the vertices on a circle with edges drawn as straight line segments. Circular drawings are a well-studied topic; see \citep{KN19,ST99,GK06} for example. The \defn{crossing graph} of a drawing $D$ of a graph $G$ has vertex-set $E(G)$ where two vertices are adjacent if the corresponding edges cross. Circle graphs are precisely the crossing graphs of circular drawings. If a graph has a circular drawing with a well-behaved crossing graph, must the graph itself also be well-behaved? Graphs that have a circular drawing with no crossings are exactly the outerplanar graphs, which have treewidth at most 2. Put another way, outerplanar graphs are those that have a circular drawing whose crossing graph is $K_2$-minor-free. Our next result extends this fact, relaxing `$K_2$-minor-free' to `$K_t$-minor-free'.

\begin{restatable}{thm}{twHadwiger}\label{twHadwiger}
    For every integer $t \geq 3$, if a graph $G$ has a circular drawing where the crossing graph has no $K_t$-minor, then $G$ has treewidth at most $12t - 23$.
\end{restatable}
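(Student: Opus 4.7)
The plan is to argue by contradiction: suppose $G$ has a circular drawing $D$ whose crossing graph $H$ has no $K_t$-minor, yet $\tw(G) \geq 12t-22$. The goal is to produce a $K_t$-minor in $H$.

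The first observation is that $H$ is itself a circle graph, since it is the intersection graph of the chords in $D$. Hence \cref{circlelargetw} is applicable to $H$ directly; the difficulty is that it would require $\tw(H) \geq 12t+2$, which we cannot access from $\tw(G) \geq 12t-22$ alone. To bridge this, I would introduce an auxiliary circle graph $H^+$ on vertex set $V(G) \cup E(G)$, built from $D$ by attaching, at the position of each $v \in V(G)$, a short ``vertex chord'' $c_v$ whose endpoints straddle the attachment points of the edges incident to $v$. A direct geometric check shows that $H^+$ is a circle graph with the following properties:
\begin{itemize}
    \item $H^+$ induces $H$ on $E(G)$;
    \item the set of vertex chords is independent in $H^+$;
    \item each edge chord for $e = uv$ has exactly $c_u$ and $c_v$ as its vertex-chord neighbours in $H^+$.
\end{itemize}
In particular the bipartite incidence edges form a subdivision of $G$ inside $H^+$, so $G$ is a topological minor of $H^+$ and $\tw(H^+) \geq \tw(G) \geq 12(t-2)+2$.

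Applying \cref{circlelargetw} to the circle graph $H^+$ with parameter $t-2$ yields $t-2$ vertex-disjoint cycles $C_1,\dots,C_{t-2}$ in $H^+$, where each vertex of $C_i$ has at least two neighbours in $C_j$ for $i<j$, and every vertex of $H^+$ has at most four neighbours in any $C_i$. Contracting the cycles yields a $K_{t-2}$-minor of $H^+$; the remaining task is to \emph{amplify} this to a $K_t$-minor lying in $H$. Since vertex chords are independent in $H^+$ and each edge chord meets only two of them, each cycle $C_i$ can be rerouted — replacing the excursions $e_1\!-\!c_v\!-\!e_2$ through a vertex chord by paths through $H$ that exist because $e_1$ and $e_2$ share the endpoint $v$ and pass through the ``crossing fan'' at $v$ in $D$ — to produce $t-2$ connected branch sets in $H$. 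The ``$\leq 4$ neighbours in any $C_i$'' clause is then used to carve out of $V(G)$'s neighbourhoods two further branch sets in $H$, disjoint from the $t-2$ cores and adjacent in $H$ to each of them, completing a $K_t$-minor of $H$.

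The main obstacle is precisely this last amplification: constructing two additional branch sets in $H$ that are disjoint from the $t-2$ cores and have the correct adjacency pattern. This is where the $24 = 2\cdot 12$-point slack between the naive bound $12t+1$ and the target bound $12t-23 = 12(t-2)+1$ must be recovered, and it is where the structural information in \cref{circlelargetw} — especially the bounded-overlap clause — is expected to be used in an essential way.
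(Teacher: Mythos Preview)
Your rerouting step has a concrete gap, and it is not repairable along the lines you suggest. When a cycle $C_i$ in $H^+$ traverses an excursion $e_1\!-\!c_v\!-\!e_2$, the edges $e_1$ and $e_2$ share the endpoint $v$ in $G$ and therefore do \emph{not} cross in $D$; they are not adjacent in $H$, and there is no ``crossing fan at $v$'' inside $H$ to route through. Worse, an entire cycle of $H^+$ can alternate $c_{v_1}\!-\!e_1\!-\!c_{v_2}\!-\!e_2\!-\!\cdots$ with the $e_i$ pairwise non-crossing (this is exactly what a crossing-free cycle of $G$ looks like inside $H^+$), so its intersection with $H$ is an independent set. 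Connected branch sets in $H^+$ simply need not project to connected sets in $H$. The amplification from $K_{t-2}$ to $K_t$ that you flag as the main obstacle is therefore not the only one; the passage from $H^+$ back to $H$ already fails.

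The paper avoids all of this by introducing a different intermediate object: the \emph{map graph} $M_D$ of the drawing. Two short lemmas do all the work. First, for any drawing $D$ one has $\tw(G) \leq 6\,\rad(M_D)+7$ (\cref{TreewidthMapUB}), proved by bounding the treewidth of the planarisation $P_D$ via the classical radius bound for planar graphs and then lifting back to $G$. Second, if $\rad(M_D)$ is large then the Cycles lemma (\cref{Cycles}) builds the nested-cycle structure directly in $X_D$, and contracting gives a large clique minor; contrapositively, $\rad(M_D)\leq 2\,h(X_D)-3$. Chaining yields
\[
\tw(G)\ \leq\ 6\bigl(2\,h(X_D)-3\bigr)+7\ =\ 12\,h(X_D)-11,
\]
so if $X_D$ is $K_t$-minor-free then $h(X_D)\leq t-1$ and $\tw(G)\leq 12t-23$.

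The conceptual point is that $\rad(M_D)$ is the common currency linking $\tw(G)$ and $h(X_D)$. \cref{circlelargetw} is itself proved by these same two ingredients (run the other way), so invoking it as a black box on an auxiliary circle graph $H^+$ and then attempting to transfer the minor back into $H$ is a detour through the theorem's own proof that loses exactly the information you need.
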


\cref{twHadwiger} says that $G$ having large treewidth is sufficient to force a complicated crossing graph in every circular drawing of $G$. A topological $K_{2, 4t}$-minor also suffices.

\begin{restatable}{thm}{NoSubdivision}\label{NoK2kSubdivision}
    If a graph $G$ has a circular drawing where the crossing graph has no $K_t$-minor, then $G$ contains no $K_{2,4t}$ as a topological minor.
\end{restatable}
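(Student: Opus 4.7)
The plan is to prove the contrapositive: a $K_{2,4t}$ topological minor in $G$ forces a $K_t$-minor in the crossing graph. Fix branch vertices $u, v$ and $4t$ internally-disjoint $u$-$v$ paths $P_1, \dots, P_{4t}$. In the drawing, $u$ and $v$ split the circle into arcs $A_1, A_2$, and the segment $\overline{uv}$ divides the disk into half-disks $D_1, D_2$. At most one path can be the edge $uv$; discarding it, each remaining path $P_i$ has a first intermediate vertex $x_i$ and a last intermediate vertex $y_i$, and thus first edge $e_i = u x_i$ and last edge $f_i = y_i v$.

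I would first classify each path by its type $(A(x_i), A(y_i)) \in \{A_1, A_2\}^2$; pigeonhole yields at least $t$ paths of one common type. In the \emph{same-arc} case (say $x_i, y_i \in A_1$ for $t$ paths), both $e_i$ and $f_j$ are chords of $D_1$, and $e_i$ crosses $f_j$ in the drawing if and only if $y_j$ precedes $x_i$ along $A_1$. Sorting the selected paths by $p(x_i)$ and pairing branch sets of the form $\{e_k, f_{k-1}\}$ yields a half-graph/staircase structure in the crossing graph: each branch is internally crossing, and for $k < \ell$ the chord $e_\ell$ crosses $f_{k-1}$, producing the desired $K_t$-minor by the standard analysis.

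In the \emph{different-arc} case (say $x_i \in A_1$ and $y_i \in A_2$ for $t$ paths), the edges $e_i \in D_1$ and $f_j \in D_2$ lie in disjoint half-disks and never cross, so the first and last edges alone are insufficient. However, every such path must contain at least one edge $c_i$ crossing the segment $\overline{uv}$, since the path switches between the two half-disks. The $K_t$-minor is then built by combining crossings among the $c_i$'s (chords between $A_1$ and $A_2$) with their crossings against the first edges $e_j \in D_1$ and last edges $f_j \in D_2$ of the selected paths: the chord $c_i$ crosses $e_j$ whenever $c_i$'s $A_1$-endpoint precedes $x_j$ on $A_1$, and symmetrically $c_i$ crosses $f_j$ whenever $c_i$'s $A_2$-endpoint precedes $y_j$ on $A_2$.

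The hard part will be the different-arc case: among the $c_i$'s alone two chords cross only when their $A_1$- and $A_2$-endpoint orderings agree, so an Erd\H{o}s--Szekeres argument on just the $c_i$'s loses a $\sqrt t$ factor. The resolution is to simultaneously use all three families of chords ($c_i$'s, $e_j$'s, $f_j$'s) together with the cyclic position information on both arcs, assembling them into $t$ disjoint, pairwise-adjacent, internally-connected branch sets in the crossing graph. The combinatorial book-keeping of this simultaneous analysis on both sides of $\overline{uv}$ is the crux.
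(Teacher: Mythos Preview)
Your same-arc case has a real gap. You claim that sorting the $t$ selected paths by $p(x_i)$ and taking branch sets $\{e_k, f_{k-1}\}$ yields a half-graph, with $e_\ell$ crossing $f_{k-1}$ whenever $k \le \ell$. But $e_\ell = u x_\ell$ crosses $f_{k-1} = y_{k-1} v$ only when $y_{k-1}$ precedes $x_\ell$ on $A_1$, and nothing in your setup forces this. Concretely, take internally-disjoint paths $P_i = (u, x_i, y_i, v)$ with all intermediates on $A_1$ in the order $u, x_1, \dots, x_t, y_1, \dots, y_t, v$: then no $e_i$ crosses any $f_j$, so your branch sets are pairwise non-adjacent and not even internally connected. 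The crossing graph here \emph{does} contain $K_t$---the middle edges $x_i y_i$ pairwise cross---but your construction never examines the interior of the paths. First and last edges alone are simply not enough information.

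The missing idea, which also resolves your admittedly incomplete different-arc case, is to use the full edge set of each path via an interval-covering argument. The paper pairs up paths that lie entirely in one arc and shows (\cref{K22ab}) that some subset of their edges has intervals covering the whole arc, hence induces a connected subgraph of $X_D$ containing an edge incident to $u$ and one incident to $v$; triples of paths that may wander between arcs are handled by a separate $K_{2,3}$-subdivision lemma (\cref{K23ab}). Paths are classified not by where their first and last intermediates lie but by whether they contain a \emph{vertical} edge (one joining the two arcs): such an edge topologically separates $u$ from $v$, so every branch set built from the other paths must cross it, and this is what delivers the pairwise adjacencies you were trying to manufacture from the $c_i$'s.
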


Outerplanar graphs are exactly those graphs that have treewidth at most 2 and exclude a topological $K_{2,3}$-minor. As such, \cref{twHadwiger,NoK2kSubdivision} extends these structural properties of outerplanar graphs to graphs with circular drawings whose crossing graphs are $K_t$-minor-free. We also prove a product structure theorem for such graphs, showing that every graph that has a circular drawing whose crossing graph has no $K_t$-minor is isomorphic to a subgraph of $H \boxtimes K_{\OO(t^3)}$ where $\tw(H)\leq 2$ (see \cref{ProductStructureCircular}).

In the other direction, we consider sufficient conditions for a graph $G$ to have a circular drawing whose crossing graph has no $K_t$-minor. By \cref{twHadwiger,NoK2kSubdivision}, $G$ must have bounded treewidth and no $K_{2,4t}$-topological minor. While these conditions are necessary, we show that they are not sufficient, but that bounded treewidth with bounded maximum degree is; see \cref{TopologicalMinorCounterExample} and \cref{twDegree} in \cref{SectionCircularBoundedtw} for details.

In addition, we show that the assumption in \cref{twHadwiger} that the crossing graph has bounded Hadwiger number cannot be weakened to bounded degeneracy. In particular, we construct  graphs with arbitrarily large complete graph minors that have a circular drawing whose crossing graph is $2$-degenerate (\cref{2Degen}). This result has applications to the study of general (non-circular) graph drawings, and in particular, leads to the solution of an open problem asked by \citet{HW21b}.

Our proofs of \cref{circlelargetw,twHadwiger,Tied} are all based on the same core lemmas proved in \cref{SectionTools}. The results about circle graphs are in \cref{SectionCircleStructure}, while the results about graph drawings are in \cref{SectionCircularStructure}.

\section{Preliminaries}\label{SectionPrelim}

\subsection{Graph Basics}\label{SectionGraphs}

We use standard graph-theoretic definitions and notation;  see \citep{Diestel5}. 

For a tree $T$, a \defn{$T$-decomposition} of a graph $G$ is a collection $\WW = (W_x \colon x \in V(T))$ of subsets of $V(G)$ indexed by the nodes of $T$ such that
(i) for every edge $vw \in E(G)$, there exists a node $x \in V(T)$ with $v,w \in W_x$; and 
(ii) for every vertex $v \in V(G)$, the set $\set{x \in V(T) \colon v \in W_x}$ induces a (connected) subtree of $T$. 
Each set $W_x$ in $\WW$ is called a \defn{bag}. 
The \defn{width} of $\WW$ is $\max\set{\abs{W_x} \colon x \in V(T)} - 1$. 
A \defn{tree-decomposition} is a $T$-decomposition for any tree $T$. 
The \defn{treewidth $\tw(G)$} of a graph $G$ is the minimum width of a tree-decomposition of $G$. 

A \defn{path-decomposition} of a graph $G$ is a $T$-decomposition where $T$ is a path. The \defn{pathwidth $\pw(G)$} of a graph $G$ is the minimum width of a path-decomposition of $G$.

A graph $H$ is a \defn{minor} of a graph $G$ if $H$ is isomorphic to a graph obtained from a subgraph of $G$ by contracting edges. The \defn{Hadwiger number $h(G)$} of a graph $G$ is the maximum integer $t$ such that $K_t$ is a minor of $G$. 

A graph~$\tilde{G}$ is a \defn{subdivision} of a graph~$G$ if~$\tilde{G}$ can be obtained from~$G$ by replacing each edge~${vw}$ by a path~$P_{vw}$ with endpoints~$v$ and~$w$ (internally disjoint from the rest of~$\tilde{G}$). A graph~$H$ is a \defn{topological minor} of~$G$ if a subgraph of~$G$ is isomorphic to a subdivision of~$H$. The \defn{Haj\'{o}s number $\hajos(G)$} of $G$ is the maximum integer $t$ such that $K_t$ is a topological minor of $G$. A graph~$G$ is \defn{$H$-topological minor-free} if~$H$ is not a topological minor of~$G$. 

It is well-known that for every graph $G$,
\begin{equation*}
    \hajos(G) \leq h(G) \leq \tw(G) + 1.
\end{equation*}

A \defn{graph class} is a collection of graphs closed under isomorphism. A \defn{graph parameter} is a real-valued function $\alpha$ defined on all graphs such that $\alpha(G_1)=\alpha(G_2)$ whenever $G_1$ and $G_2$ are isomorphic.

\subsection{Drawings of Graphs}\label{SectionGraphDrawing}

A \defn{drawing} of a graph $G$ is a function $\phi$ that maps each vertex $v\in V(G)$ to a point $\phi(v)\in\RR^2$ and maps each edge $e=vw\in E(G)$ to a non-self-intersecting curve $\phi(e)$ in $\RR^2$ with endpoints $\phi(v)$ and $\phi(w)$, such that:
\begin{itemize}
    \item $\phi(v)\neq \phi(w)$ for all distinct vertices $v$ and $w$;
    \item $\phi(x)\not\in \phi(e)$ for each edge $e = vw$ and each vertex $x\in V(G)\setminus\set{v,w}$;
    \item each pair of edges intersect at a finite number of points: $\phi(e) \cap \phi(f)$ is finite for all distinct edge $e, f$; and
    \item no three edges internally intersect at a common point: for distinct edges $e, f, g$ the only possible element of $\phi(e) \cap \phi(f) \cap \phi(g)$ is $\phi(v)$ where $v$ is a vertex incident to all of $e, f, g$.
\end{itemize}

A \defn{crossing} of distinct edges $e = uv$ and $f = xy$ is a point in $(\phi(e)\cap \phi(f))\setminus\set{\phi(u),\phi(v),\phi(x),\phi(y)}$; that is, an internal intersection point. A \defn{plane graph} is a graph $G$ equipped with a drawing of $G$ with no crossings. 

The \defn{crossing graph} of a drawing $D$ of a graph $G$ is the graph $X_D$ with vertex set $E(G)$, where for each crossing between edges $e$ and $f$ in $D$, there is an edge of $X_D$ between the vertices corresponding to $e$ and $f$. Note that $X_D$ is actually a multigraph, where the multiplicity of $ef$ equals the number of times $e$ and $f$ cross in $D$. In most drawings that we consider, each pair of edges cross at most once, in which case $X_D$ has no parallel edges. 

Numerous papers have studied graphs that have a drawing whose crossing graph is well-behaved in some way. Here we give some examples. The \defn{crossing number $\CR(G)$} of a graph $G$ is the minimum number of crossings in a drawing of $G$; see the surveys~\citep{Schaefer22,PT00,Szekely04} or the monograph~\citep{Schaefer18}. Obviously, $\CR(G)\leq k$ if and only if $G$ has a drawing $D$ with $\abs{E(X_D)} \leq k$.  \Citet{Tutte63a} defined the \defn{thickness} of a graph $G$ to be the minimum number of planar graphs whose union is $G$; see \cite{Hobbs69,MOS98} for surveys. Every planar graph can be drawn with its vertices at prespecified locations~\citep{Halton91,PW01}. It follows that a graph $G$ has thickness at most $k$ if and only if $G$ has a drawing $D$ such that  $\chi(X_D)\leq k$. A graph is \defn{$k$-planar} if $G$ has a drawing $D$ in which every edge is in at most $k$ crossings; that is, $X_D$ has maximum degree at most $k$; see~\citep{PachToth97,GB07,DMW17,DEW17} for example. More generally, \citet{EG17} defined a graph $G$ to be \defn{$k$-degenerate crossing} if $G$ has a drawing $D$ in which $X_D$ is $k$-degenerate. \Citet{GapPlanar18} defined a graph $G$ to be \defn{$k$-gap-planar} if $G$ has a drawing $D$ in which each crossing can be assigned to one of the two involved edges and each edge is assigned at most $k$ of its crossings. This is equivalent to saying that every subgraph of $X_D$ has average degree at most $2k$. It follows that every $k$-degenerate crossing graph is $k$-gap-planar, and every $k$-gap-planar graph is a $2k$-degenerate crossing graph~\citep{HW21c}. 

A drawing is \defn{circular} if the vertices are positioned on a circle and the edges are straight line segments. A theme of this paper is to study circular drawings $D$ in which $X_D$ is well-behaved in some way. Many papers have considered properties of $X_D$ in this setting. The \defn{convex crossing number} of a graph $G$ is the minimum number of crossings in a circular drawing of $G$; see \citep{Schaefer22} for a detailed history of this topic. Obviously, $G$ has convex crossing number at most $k$ if and only if $G$ has a circular drawing $D$ with $\abs{E(X_D)} \leq k$. The \defn{book thickness} (also called \defn{page-number} or \defn{stack-number}) of a graph $G$ can be defined as the minimum, taken over all circular drawings $D$ of $G$, of $\chi(X_D)$. This parameter is widely studied; see~\citep{DujWoo07,DEHMW22,BBKR17,MBKPRU20,Yann89,Yann20} for example.

\section{Tools}\label{SectionTools}

In this section, we introduce two auxiliary graphs that are useful tools for proving our main theorems. 

For a drawing $D$ of a graph $G$, the \defn{planarisation}, $P_D$, of $D$ is the plane graph obtained by replacing each crossing with a dummy vertex of degree~4. Note that $P_D$ depends upon the drawing $D$ (and not just upon $G$). \Cref{fig:graphplanar} shows a drawing and its planarisation.

\begin{figure}[ht]
    \centering
    \begin{subfigure}[t]{0.4\textwidth}
        \centering
        \includegraphics{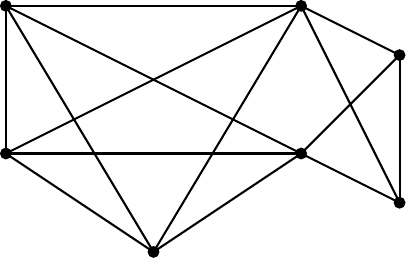}
        \caption{Drawing $D$ of a graph $G$.}
    \end{subfigure}
    \begin{subfigure}[t]{0.4\textwidth}
        \centering
        \includegraphics{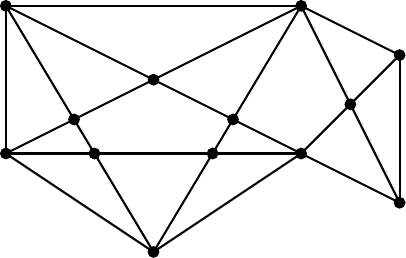}
        \caption{Planarisation $P_D$.}        
    \end{subfigure}
    \caption{A drawing and its planarisation.}\label{fig:graphplanar}
\end{figure}

For a drawing $D$ of a graph $G$, the \defn{map graph}, $M_D$, of $D$ is obtained as follows. First let $P_D$ be the planarisation of $D$. The vertices of $M_D$ are the faces of $P_D$, where two vertices are adjacent in $M_D$ if the corresponding faces share a vertex. If $G$ is itself a plane graph, then it is already drawn in the plane and so we may talk about the map graph, $M_G$, of $G$. Note that all map graphs are connected graphs. \Cref{fig:map} shows the map graph $M_D$ for the drawing $D$ in \cref{fig:graphplanar}.

\begin{figure}[ht]
    \centering
    \includegraphics{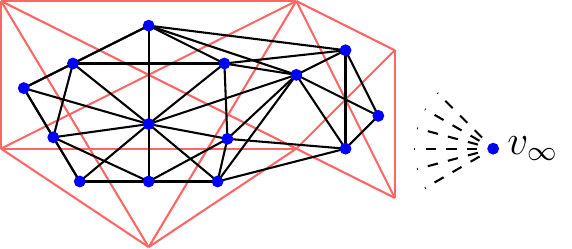}
    \caption{Map graph $M_D$. $v_{\infty}$ is the vertex corresponding to the outer face: it is adjacent to all vertices except the central vertex of degree 10.}\label{fig:map}
\end{figure}

The \defn{radius} of a connected graph $G$, denoted \defn{$\rad(G)$}, is the minimum non-negative integer $r$ such that for some vertex $v \in V(G)$ and for every vertex $w\in V(G)$ we have $\dist_G(v,w)\leq r$.

In \cref{SectionToolsGeneral}, we show that the radius of the map graph $M_D$ acts as an upper bound for the treewidths of $G$ and $X_D$. In \cref{SectionToolsCircular}, we show that if $D$ is a circular drawing and the map graph $M_D$ has large radius, then $X_D$ contains a useful substructure. Thus the radius of $M_D$ provides a useful bridge between the treewidth of $G$, the treewidth of $X_D$, and the subgraphs of $X_D$.%

\subsection{Map Graphs with Small Radii}\label{SectionToolsGeneral}

Here we prove that for any drawing $D$ of a graph $G$, the radius of $M_D$ acts as an upper bound for both the treewidth of $G$ and the treewidth of $X_D$.

\begin{thm}\label{TreewidthMapUB}
    For every drawing $D$ of a graph $G$,
    \begin{align*}
            \tw(G) \leq 6 \rad(M_{D}) + 7 \quad  \textnormal{ and } \quad
            \tw(X_D) \leq 6 \rad(M_D) + 7.
        \end{align*}
\end{thm}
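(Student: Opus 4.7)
The plan is to bound both $\tw(G)$ and $\tw(X_D)$ via the treewidth of a planar auxiliary graph of radius $O(\rad(M_D))$, and then apply the classical Bodlaender/Robertson--Seymour bound that every planar graph $H$ satisfies $\tw(H) \leq 3 \rad(H) + 1$. Write $\rho = \rad(M_D)$ and fix a face $f_0$ of $P_D$ with $\ecc_{M_D}(f_0) = \rho$.

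First I will enhance $P_D$ to a plane graph $P^*$ by placing, inside each face $f$ of $P_D$, a ``face-dummy'' vertex $v_f$ joined to every boundary vertex of $f$; since each $v_f$ is embedded in its own face, $P^*$ remains planar. I will then verify that $\ecc_{P^*}(v_{f_0}) \leq 2\rho + 1$: for any $v \in V(P_D)$ lying on some face $f(v)$ with $d_{M_D}(f_0, f(v)) = \ell(v)$, take a shortest $M_D$-path $f_0 = g_0, g_1, \dots, g_{\ell(v)} = f(v)$ and pick vertices $u_i$ shared by $g_{i-1}$ and $g_i$; the walk $v_{f_0} \to u_1 \to v_{g_1} \to u_2 \to \cdots \to v_{f(v)} \to v$ in $P^*$ has length $2\ell(v) + 1 \leq 2\rho + 1$, and an analogous walk reaches face-dummies. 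The planar-radius-to-treewidth bound then yields $\tw(P^*) \leq 3(2\rho + 1) + 1 = 6\rho + 4 \leq 6\rho + 7$.

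To transfer this bound to $G$ and $X_D$, I will exhibit each as a minor of a planar graph derived from $P^*$. For $X_D$, I will modify $P_D$ by splitting each crossing vertex $c$ between $G$-edges $e,f$ into two adjacent copies $c_e, c_f$ (with a local rerouting that keeps the graph planar), yielding $\widetilde P$. Then $X_D$ (modulo isolated vertices, which do not affect treewidth) is a minor of $\widetilde P$: for each $e \in E(G)$, taking $B_e = \{c_e : c \text{ is a crossing on } e\}$ gives a connected path in $\widetilde P$ via the segments of $e$ between consecutive crossings, the $B_e$'s are pairwise disjoint, and a crossing of $e,f$ at $c$ realises the required $X_D$-adjacency via the edge $c_e c_f$. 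Running the same enhancement-and-radius argument on $\widetilde P$ gives $\tw(X_D) \leq 6\rho + 7$. For $G$: since ``uncrossing'' is not a standard minor operation, I will use a further modification of the planar proxy (e.g.\ enriching $\widetilde P$ by local edges at split crossings that realise $G$ as a topological minor) so that the same radius-to-treewidth argument applies.

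The main obstacle will be twofold. For $X_D$, splitting a crossing could, a priori, cause a two-step detour in $M_{\widetilde P}$ whenever an $M_D$-path goes through $c$, giving only the naive bound $\tw(X_D) \leq 12\rho + O(1)$; to recover $6\rho + 7$ I will exploit that each face of $P_D$ corresponds to a face of $\widetilde P$ still touching one of each split pair, so $M_D$-walks lift to $M_{\widetilde P}$-walks of essentially the same length. For $G$, realising $G$ as a minor of a planar proxy is delicate because suppressing a degree-$4$ crossing into a pair of non-crossing edges is not itself a minor operation, so one must choose the enrichment so that the minor relation holds without inflating the radius.
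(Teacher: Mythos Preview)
There is a fundamental gap in your transfer step: neither $G$ nor $X_D$ need be planar, so neither can be realised as a minor (or topological minor) of \emph{any} planar proxy, no matter how you enrich it. For $G$ this is immediate (take $G=K_5$ with an arbitrary drawing); for $X_D$, five pairwise-crossing chords of a circle already give $X_D\cong K_5$. Concretely, your $\widetilde P$ cannot be both planar and contain $X_D$ as a minor. At a crossing vertex $c$ of $P_D$ the four neighbours occur in cyclic order $e_1,f_1,e_2,f_2$, alternating between the two crossing edges; hence no planar local replacement can produce a vertex $c_e$ adjacent to both $e_1,e_2$ \emph{and} a vertex $c_f$ adjacent to both $f_1,f_2$ together with an edge $c_ec_f$. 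Any genuinely planar split (say $c_e$ adjacent to $e_1,f_1$ and $c_f$ adjacent to $e_2,f_2$) destroys the connectivity of at least one branch set $B_e$. The issue you flag for $G$ as ``delicate'' is therefore an outright obstruction, not a technicality.

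The paper avoids this by never seeking a planar host for $G$ or $X_D$. It first proves $\tw(P_D)\le 3\rad(M_D)+3$ via a plane triangulation $H\supseteq P_D$ with $\rad(H)\le \rad(M_D)+1$; this is a sharper version of your face-dummy idea, obtained by fanning each face from an \emph{existing} boundary vertex chosen to minimise a distance label, rather than inserting a fresh vertex (your construction only gives radius $2\rho+1$, which would lose a factor of $2$). It then transfers to $G$ and $X_D$ by editing a tree-decomposition of $P_D$ \emph{bag by bag}: each dummy crossing-vertex is replaced, in every bag containing it, by two objects---one designated endpoint from each of the two crossing edges (to obtain a tree-decomposition of $G$), or the two crossing edges themselves, viewed as vertices of $X_D$ (to obtain a tree-decomposition of $X_D$). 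This substitution is not a graph-minor operation at all; it is a direct surgery on the bags that one checks preserves both tree-decomposition axioms, and it at most doubles bag sizes, yielding $\tw(G),\tw(X_D)\le 2\tw(P_D)+1\le 6\rad(M_D)+7$.
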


\Citet[Prop.~8.5]{WT07} proved that if a graph $G$ has a circular drawing $D$ such that whenever edges $e$ and $f$ cross, $e$ or $f$ crosses at most $d$ edges, then $G$ has treewidth at most $3d + 11$. This assumption implies $\rad(M_D) \leq \floor{d/2} + 1$ and so the first inequality of \cref{TreewidthMapUB} generalises this result.

It is not surprising that treewidth and radius are related for drawings. A classical result of \citet[(2.7)]{RS-III} says that $\tw(G)\leq 3\rad(G) + 1$ for every connected planar graph $G$. Several authors improved this bound as follows. 

\begin{lem}[\citep{Bod88,DMW17}]\label{PlanarRadius}
    For every connected planar graph $G$,
    \begin{equation*}
        \tw(G) \leq 3\,\rad(G).
    \end{equation*}
\end{lem}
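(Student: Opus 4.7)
The plan is to build a tripod-style tree decomposition from a BFS tree rooted at a central vertex. Let $r = \rad(G)$ and fix a central vertex $v$ with $\ecc(v) = r$. Run BFS from $v$ to obtain a spanning tree $T$ rooted at $v$; for every $u \in V(G)$, the $v$-to-$u$ path $P_u$ in $T$ has at most $r+1$ vertices. Since adding edges can only increase treewidth and keeps the graph planar, I would first triangulate $G$ to a maximal planar graph $G'$ (keeping $T$ as a spanning tree) and bound $\tw(G')$ instead.

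Index the tree decomposition by the triangular faces of $G'$: for each face $f$ with vertices $a,b,c$, set
\begin{equation*}
B_f := V(P_a) \cup V(P_b) \cup V(P_c).
\end{equation*}
Each $V(P_x)$ has at most $r+1$ vertices and all three share the root $v$, so $\abs{B_f} \le 3(r+1) - 2 = 3r+1$, giving width at most $3r$. For the tree structure, I would use the standard fact that if $T$ is a spanning tree of a plane graph then the dual edges of $E(G') \setminus E(T)$ form a spanning tree $T^*$ of the planar dual $(G')^*$; the nodes of $T^*$ are precisely the faces of $G'$.

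Verification of the tree decomposition axioms then splits as follows. The edge condition is automatic, since any edge $uw$ of $G'$ is incident to some triangular face $f$, and $u, w \in B_f$. The subtree condition is the delicate part: for a fixed $x \in V(G)$, observe that $x \in B_f$ exactly when some vertex of $f$ lies in the subtree $T_x \subseteq T$ rooted at $x$. The aim is to show that the set of such faces induces a connected subtree of $T^*$. Because $V(T_x)$ induces a connected subgraph of $T$ (hence a topologically connected closed region in the planar embedding once one thickens $T$), the faces incident to $V(T_x)$ form a topological annular neighbourhood of that region, and the restriction of $T^*$ to this set is connected by the cotree/cocycle correspondence between $T$ and $T^*$. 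The main obstacle is making this last topological argument precise: one must exploit the planar embedding to show that the specific dual spanning tree $T^*$ (consisting of non-$T$ edges) connects, inside the set of faces touching $T_x$, the faces adjacent through edges whose duals lie in $T^*$. Once this is established, the tripod decomposition has width at most $3r$, yielding $\tw(G) \le \tw(G') \le 3\rad(G)$.
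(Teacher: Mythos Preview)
The paper does not prove this lemma; it is quoted from \citep{Bod88,DMW17} without proof, so there is no in-paper argument to compare against. Your tripod/BFS-tree construction is precisely the standard proof in those references, and the outline is correct, including the bag-size count.

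You are right that the connectivity axiom is the only non-obvious step, and your topological sketch points in the right direction but is imprecise as written. A cleaner way to finish: for a non-tree edge $e = uw$, removing the dual edge $e^*$ from $T^*$ separates the faces inside the fundamental cycle $C_e$ (the cycle $e$ closes in $T$) from those outside. Suppose $F_x = \{f : V(f) \cap V(T_x) \neq \emptyset\}$ meets both sides. Since $T_x$ is a connected subgraph of the plane graph $G'$ with vertices on both sides of the Jordan curve $C_e$, it has a vertex $p$ on $C_e$, hence on the tree path between $u$ and $w$. As $x$ is an ancestor of $p$, a short case check (depending on whether $x$ lies between $p$ and the LCA of $u,w$, or above that LCA) gives $u \in T_x$ or $w \in T_x$; then both faces incident to $e$ lie in $F_x$, so $e^*$ is internal to $T^*[F_x]$. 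Hence no edge of $T^*$ separates $F_x$, and $F_x$ is connected. This replaces the ``annular neighbourhood'' hand-wave with a direct combinatorial argument and completes your proof.
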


We now prove that if a planar graph $G$ has large treewidth, then the map graph of any plane drawing of $G$ has large radius. For a plane graph $G$, we say a graph $H$ is a \defn{triangulation} of $G$ if $H$ is a plane supergraph of $G$ on the same vertex set and where each face is a triangle.

\begin{lem}\label{PlanarRadiusMap}
    Let $G$ be a plane graph with map graph $M_G$. Then there is a plane triangulation $H$ of $G$ with $\rad(H) \leq \rad(M_G) + 1$. In particular,
    \begin{equation*}
        \tw(G) \leq 3 \,\rad(M_G) + 3.
    \end{equation*}
\end{lem}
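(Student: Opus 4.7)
The plan is to build a plane triangulation $H$ of $G$ on the same vertex set whose radius is at most $\rad(M_G)+1$, and then feed $H$ into \cref{PlanarRadius}.

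Let $r = \rad(M_G)$ and fix a central face $f^*$ of $M_G$, that is, a face of $G$ at distance at most $r$ in $M_G$ from every other face. Pick any vertex $v^*$ on the boundary of $f^*$. For each face $f$ of $G$ write $d(f) = \dist_{M_G}(f^*,f) \leq r$, and for each face $f \neq f^*$ choose a neighbour $f^-$ of $f$ in $M_G$ with $d(f^-) = d(f)-1$ together with a vertex $u_f$ shared by $f$ and $f^-$ (such a vertex exists by the definition of edges in $M_G$); set $u_{f^*} := v^*$.

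I would then construct $H$ by triangulating each face $f$ of $G$ from its chosen apex $u_f$: add the edges from $u_f$ to every other vertex of the boundary of $f$ and insert further chords so that the interior of $f$ is cut into triangles. When the boundary of $f$ is a simple cycle this is the standard fan triangulation from $u_f$; when $G$ is not $2$-connected so that $u_f$ or other vertices repeat on the boundary walk, the same fan-from-one-corner construction still yields a valid planar triangulation on $V(G)$. This is the main technical point that needs to be written out carefully, and it is the only real obstacle in the proof; it is a standard but fiddly planar-graph construction.

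A short induction on $d(f)$ now shows that for every face $f$ and every vertex $v$ on the boundary of $f$, $\dist_H(v,v^*) \leq d(f)+1$. The base $f=f^*$ is immediate from the construction, since every boundary vertex of $f^*$ is either $v^*$ itself or adjacent to $v^*$ in $H$. For the step with $d(f)=k \geq 1$, the chosen apex $u_f$ lies on the boundary of $f^-$ with $d(f^-)=k-1$, so inductively $\dist_H(u_f,v^*) \leq k$; and every boundary vertex of $f$ is adjacent in $H$ to $u_f$, giving distance at most $k+1$ from $v^*$. Every vertex of $G$ lies on the boundary of some face, so $\rad(H) \leq r+1 = \rad(M_G)+1$.

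Finally, applying \cref{PlanarRadius} to the connected plane triangulation $H$ yields $\tw(H) \leq 3\rad(H) \leq 3\rad(M_G)+3$. Since $H$ is a supergraph of $G$ on the same vertex set, $\tw(G) \leq \tw(H) \leq 3\rad(M_G)+3$, as required. (Small cases with $|V(G)| < 3$, where no plane triangulation exists, are handled trivially since then $\tw(G) \leq |V(G)|-1 \leq 2$.)
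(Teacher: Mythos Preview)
Your proposal is correct and takes essentially the same approach as the paper: fix a central face and a root vertex on it, fan-triangulate each face from a suitably chosen apex, and bound the radius by tracking distances back to the root. The only cosmetic difference is how the apex of a face is selected---the paper picks the boundary vertex of minimum ``$\rho$-value'' (the smallest $M_G$-distance over all incident faces) and argues each vertex gets a neighbour of strictly smaller $\rho$, whereas you fix a BFS-parent $f^-$ for each face and take a vertex shared with $f^-$, then induct on $d(f)$; these are equivalent bookkeeping choices.
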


\begin{proof}
    Let $F_0$ be a face of $G$ such that every vertex in $M_G$ has distance at most $\rad(M_G)$ from $F_0$. For each face $F$ of $G$, let $\dist_0(F)$ be the distance of $F$ from $F_0$ in $M_G$. 
    
    Fix a vertex $v_0$ of $G$ in the boundary of $F_0$, and set $\rho(v_0) \coloneqq -1$. For every other vertex $v$ of $G$, let
    \begin{equation*}
        \rho(v) = \min\set{\dist_0(F) \colon v \text{ is on the boundary of face } F}.
    \end{equation*}
    Note that $\rho$ takes values in $\set{-1, 0, \dotsc, \rad(M_G)}$. 
    
    We now construct a triangulation $H$ of $G$ such that every vertex $v \neq v_0$ is adjacent (in $H$) to a vertex $u$ with $\rho(u) < \rho(v) $. In particular, the distance from $v$ to $v_0$ in $H$ is at most $\rho(v) + 1 \leq \rad(M_G) + 1$, and so $H$ has the required radius.
    For each face $F$, let $v_F$ be a vertex of $F$ with smallest $\rho$-value. Note that $v_{F_0} = v_0$. Triangulate $G$ as follows. First, consider one-by-one each face $F$. For every vertex $v$ of $F$ that is not already adjacent to $v_F$, add the edge $vv_F$. Finally, let $H$ be obtained by triangulating the resulting graph. 
    
    Consider any vertex $v \neq v_0$. Let $F$ be a face on whose boundary $v$ lies and with $\rho(v) = \dist_0(F)$. If $F = F_0$, then $\rho(v) = 0$ and $vv_0 \in E(H)$, as required. Otherwise assume $F \neq F_0$. By considering a shortest path from $F$ to $F_0$ in $M_G$, there is some face $F' \in V(M_G)$ that shares a vertex with $F$ and has $\dist_0(F') < \dist_0(F)$. Let $v'$ be a vertex on the boundary of both $F$ and $F'$. Then $\rho(v_F) \leq \rho(v') \leq \dist_0(F') < \dist_0(F) = \rho(v)$. Furthermore, by construction, $v$ and $v_F$ are adjacent in $H$, as required.
    
    By \cref{PlanarRadius},
    $\tw(G) \leq \tw(H) \leq 3 \rad(H) \leq 3 \rad(M_G) + 3$.
\end{proof}

Note that a version of \cref{PlanarRadiusMap} with $\rad(M_G)$ replaced by the eccentricity of the outerface in $M_G$ can be proved via outerplanarity\footnote{Say a plane graph $G$ is \defn{$k$-outerplane} if removing all the vertices on the boundary of the outerface leaves a $(k-1)$-outerplane subgraph, where a plane graph is \defn{$0$-outerplane} if it has no vertices. Consider a plane graph $G$, where $v_{\infty}$ is the vertex of $M_G$ corresponding to the outerface. Then one can show that if $v_{\infty}$ has eccentricity $k$ in $M_G$, then $G$ is $(k + 1)$-outerplane, and conversely, if $G$ is $k$-outerplane, then $v_{\infty}$ has eccentricity at most $k$ in $M_G$. \citet{Bodlaender98} showed that every $k$-outerplanar graph has treewidth at most $3k - 1$. The same proof shows that every $k$-outerplane graph has treewidth at most $3k - 1$ (which also follows from \citep{DMW17}).}. 

We use the following lemma about planarisations to extend \cref{PlanarRadiusMap} from plane drawings to arbitrary drawings.

\begin{lem}\label{Planarisationtw}
    For every drawing $D$ of a graph $G$, the planarisation $P_D$ of $D$ satisfies
    \begin{align*}
        \tw(G) \leq 2\tw(P_D) + 1 \quad \textnormal{and} \quad
        \tw(X_D) \leq 2 \tw(P_D) + 1.
    \end{align*}
\end{lem}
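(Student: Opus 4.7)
The plan is to build tree-decompositions of $G$ and of $X_D$ directly from one of $P_D$, losing only a factor of two (plus one) in the width. The key observation is that every dummy vertex of $P_D$ has degree $4$ and lies on exactly two edges of $G$, so replacing or splitting each dummy contributes to at most two ``objects'' on the target side.

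For the first inequality, I would introduce an auxiliary graph $\tilde{G}$ by \emph{uncrossing} the planarisation: each dummy $c$ of $P_D$ lies on two edges $e,f\in E(G)$, with two of its incident $P_D$-edges belonging to $P_e$ and the other two to $P_f$; replace $c$ by two new vertices $c_e,c_f$, with $c_e$ inheriting the two $P_e$-edges and $c_f$ the two $P_f$-edges. Then $\tilde{G}$ is literally a subdivision of $G$ (each $e\in E(G)$ is subdivided once per crossing on $e$). Given a tree-decomposition $(W_x)_{x\in V(T)}$ of $P_D$ of width $w=\tw(P_D)$, define $W_x' := (W_x\cap V(G))\cup \bigcup_{c\in W_x \text{ dummy}} \{c_e,c_f\}$. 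The subtree of $T$ indexing bags that contain $c_e$ (respectively $c_f$) is identical to the one that contained $c$, and every edge of $\tilde{G}$ arises from an edge of $P_D$ whose endpoints lie together in some bag of $P_D$ and hence in that bag of $(W_x')$. Thus $(W_x')$ is a valid tree-decomposition of $\tilde{G}$ with $|W_x'|\le 2|W_x|\le 2(w+1)$, so $\tw(\tilde{G})\le 2w+1$. Since $G$ is a topological minor of its subdivision $\tilde{G}$, treewidth monotonicity gives $\tw(G)\le\tw(\tilde{G})\le 2\tw(P_D)+1$.

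For the second inequality, starting from the same tree-decomposition $(W_x)_{x\in V(T)}$ of $P_D$, define
\[
    W_x'' := \bigl\{\, e\in E(G)\, :\, W_x \text{ contains a dummy vertex that lies on } P_e \,\bigr\}.
\]
Each dummy sits on exactly two edges of $G$, so $|W_x''|\le 2\,\bigl|\{\text{dummies in }W_x\}\bigr|\le 2(w+1)$. Every edge $ef$ of $X_D$ corresponds to a crossing, i.e.\ a common dummy $c\in V(P_e)\cap V(P_f)$; picking any bag $W_x\ni c$ puts both $e$ and $f$ into $W_x''$. For the subtree condition on a vertex $e\in V(X_D)$ that has at least one crossing, observe that the dummies of $P_e$ induce a (sub)path of $P_D$, and the union over the vertices of a connected subgraph of the corresponding subtrees is again a connected subtree of $T$; hence $\{x:e\in W_x''\}$ is a connected subtree. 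A crossing-free edge $e$ is an isolated vertex of $X_D$: for each such $e$, attach a new leaf to $T$ with bag $\{e\}$, which does not increase the width. This produces a tree-decomposition of $X_D$ of width at most $2\tw(P_D)+1$.

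I do not foresee a serious obstacle: both halves reduce to the single observation that dummies contribute to exactly two edges of $G$, combined with a standard fact that the bags meeting a connected subgraph form a connected subtree. The only mildly delicate point is the final tweak for crossing-free edges in the $X_D$ construction, needed only to ensure every vertex of $X_D$ is actually covered by the tree-decomposition.
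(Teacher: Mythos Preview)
Your proposal is correct and follows essentially the same approach as the paper: in both cases one takes a tree-decomposition of $P_D$ and replaces each dummy vertex by the two ``objects'' it represents (two subdivision vertices or two edge-heads for $G$; the two crossing edges for $X_D$), doubling the bag size. The only cosmetic difference is that for the first inequality the paper orients the edges and replaces each dummy directly by the two heads to obtain a tree-decomposition of $G$ itself, whereas you detour through the subdivision $\tilde{G}$ and invoke topological-minor monotonicity; your extra care with crossing-free edges in the $X_D$ argument is a point the paper leaves implicit.
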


\begin{proof}
    Consider a tree-decomposition $(T, \WW)$ of $P_D$ in which each bag has size at most $\tw(P_D) + 1$ . Now we prove the first inequality. Arbitrarily orient the edges of $G$. Each dummy vertex $x$ of $P_D$ corresponds to a crossing between two oriented edges $ab$ and $cd$ of $G$. For each dummy vertex $x$, replace each instance of $x$ in the tree-decomposition $(T, \WW)$ by $b$ and $d$. It is straightforward to verify this gives a tree-decomposition $(T, \WW')$ of $G$ with bags of size at most $2 \tw(P_D) + 2$. Hence $\tw(G) \leq 2\tw(P_D) + 1$.
    
    Now for the second inequality. Each dummy vertex $x$ of $P_D$ corresponds to a crossing between two edges $e$ and $f$ of $G$. For each dummy vertex $x$, replace each instance of $x$ in $(T, \WW)$ by $e$ and $f$. Also, for each vertex $v$ of $G$, delete all instances of $v$ from $(T, \WW)$. This gives a tree-decomposition $(T, \WW'')$ of $X_D$ with bags of size at most $2\tw(P_D) + 2$. Hence $ \tw(X_D) \leq 2 \tw(P_D) + 1$.
\end{proof}

We are now ready to prove \cref{TreewidthMapUB}.

\begin{proof}[Proof of~\cref{TreewidthMapUB}]
    Let $P_D$ be the planarisation of $D$. By definition, $M_D \cong M_{P_D}$. \Cref{PlanarRadiusMap} implies
    \begin{equation*}
        2 \tw(P_D) + 1 \leq 2(3 \rad(M_{P_D}) + 3) + 1 = 6 \rad(M_D) + 7.
    \end{equation*}
    \Cref{Planarisationtw} now gives the required result.
\end{proof}

\subsection{Map Graphs with Large Radii}\label{SectionToolsCircular}

The next lemma is a cornerstone of this paper. It shows that if the map graph of a circular drawing has large radius, then the crossing graph contains a useful substructure. For $a,b\in \RR$ where $a < b$, let \defn{$(a,b)$} denote the open interval $\set{r\in \RR \colon a < r < b}$.

\begin{lem}\label{Cycles}
    Let $D$ be a circular drawing of a graph $G$. If the map graph $M_D$ has radius at least $2t$, then the crossing graph $X_D$ contains $t$ vertex-disjoint induced cycles $C_1, \dotsc, C_t$ such that for all $i<j$ every vertex of $C_i$ has at least two neighbours in $C_j$. 
    Moreover, every vertex of $X_D$ has at most four neighbours in any $C_i$ $(1\le i\le t)$.
\end{lem}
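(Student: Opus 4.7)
The plan is to use the hypothesis $\rad(M_D) \geq 2t$ to locate a deeply buried face $F^*$ and then build a strictly nested sequence of closed regions $R_1 \subsetneq R_2 \subsetneq \cdots \subsetneq R_t$ around it, so that the cycles $C_k$ arise from cleaned-up boundaries of the $R_k$. Let $F_0$ denote the outer face of the planarisation $P_D$ and set $d(F) := \dist_{M_D}(F, F_0)$ for each face $F$. Since $\rad(M_D) \geq 2t$, the eccentricity of $F_0$ in $M_D$ is at least $2t$, so fix $F^*$ with $d(F^*) \geq 2t$. For each $k \in \set{1, \ldots, t}$, let $R_k$ be the connected component containing $F^*$ of $\bigcup\set{\overline F : d(F) \geq 2(t-k) + 1}$. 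The odd thresholds $2t-1, 2t-3, \ldots, 1$ decrease in steps of two as $k$ grows, which forces the strict nesting. Moreover, every original vertex of $G$ lies on $\partial F_0$ and thus has $d = 0$, so no $R_k$ contains any original vertex and each $R_k$ lies strictly inside the disk.

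The topological boundary $\partial R_k$ is a union of chord-arcs meeting only at dummy vertices of $P_D$. Let $\gamma_k$ be the component of $\partial R_k$ separating $F^*$ from $F_0$; it is a simple closed curve. Traversing $\gamma_k$ once yields a cyclic sequence of arcs $a_1, \ldots, a_m$ carried by chords $c_1, \ldots, c_m$, and consecutive chords $c_s, c_{s+1}$ cross at the dummy shared by $a_s$ and $a_{s+1}$. Hence $c_1, \ldots, c_m$ is a closed walk in $X_D$ whose consecutive entries are adjacent. Iterated shortcutting at repeated chords, and then at crossings between non-consecutive chords, produces an induced cycle $C_k$ of $X_D$ whose chord-set is contained in $\set{c_1, \ldots, c_m}$.

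For the cross-cycle condition, fix $i < j$. Since $R_i \subsetneq R_j$, the curve $\gamma_i$ lies inside $R_j$, so any chord $c \in C_i$ has a subarc on $\gamma_i$ inside $R_j$, while both endpoints of $c$ lie on the outer circle, outside $R_j$. The straight segment $c$ therefore exits $R_j$ on both sides of its $\gamma_i$-subarc, crossing $\gamma_j$ in at least two distinct points; each such crossing is with a distinct chord of $C_j$ because two straight chords meet at most once. Thus every chord of $C_i$ has at least two neighbours in $C_j$. Vertex-disjointness of the $C_k$ uses the threshold spacing of two together with the flexibility in the shortcutting step.

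The main technical obstacle is the ``at most four neighbours in any $C_i$'' bound. For an arbitrary chord $c$ in $X_D$, the neighbours of $c$ in $C_i$ are the chords of $C_i$ crossed by $c$, equivalently the points of $c \cap \gamma_i$ after extraction; since both endpoints of $c$ lie outside $R_i$, this count is even. The bound of four is obtained via a depth-profile analysis along $c$: the sequence of depths of the faces bordering $c$ on each of its two sides changes by at most one at each crossing along $c$ (because two faces adjacent at a dummy are adjacent in $M_D$), while the depth starts and ends at value at most one at the two endpoints of $c$. The parity of the threshold $2(t-i)+1$ defining $\gamma_i$ together with this bounded-step walk constraint restricts each side's profile to at most two transitions across the threshold, giving at most four in total. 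Carrying out this depth-profile estimate while coordinating it with the shortcut extraction so that the bound survives is the delicate part of the argument.
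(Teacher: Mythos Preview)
Your approach via nested depth-regions is genuinely different from the paper's. The paper fixes a point $p$ inside a face at distance at least $2t$ from the outer face, parametrises each chord $e$ by the angular interval $I_e = \{\theta : R_\theta \text{ crosses } e\}$ of rays $R_\theta$ emanating from $p$, and calls a set $E'$ of chords \emph{dominant} if $\bigcup_{e\in E'} I_e = \RR$. A minimally dominant set then automatically induces a cycle in $X_D$, is crossed by each ray at most twice, and hence is crossed by any single chord $uv$ at most four times, since every chord crossing $uv$ must cross the ray through $u$ or the ray through $v$. The cycles $C_i$ are obtained by iteratively peeling off minimally dominant subsets whose chords are maximal (for interval containment) among the remaining ones; maximality immediately gives the two-neighbours-in-$C_j$ property, and disjointness is built in.

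Your proposal, by contrast, has two real gaps.

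First, the depth-profile argument for the ``at most four'' bound does not work. An integer sequence starting and ending at $1$ and changing by at most one at each step can cross any fixed threshold an unbounded number of times (for instance $1,2,3,2,3,2,3,\dots,2,1$ oscillates across the level between $2$ and $3$ as often as one wishes). Neither the parity of the threshold nor the boundary values constrains the number of transitions. In the paper the bound of four has nothing to do with depth oscillation; it comes from the angular structure: a minimally dominant set meets each ray at most twice, and any chord determines only two such rays.

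Second, the shortcutting step is not compatible with the two-neighbour argument as you present it. You show that a chord $c \in C_i$ must cross the \emph{curve} $\gamma_j$ at least twice, and then conclude that these crossings are with chords of $C_j$. But $C_j$ is a shortcut of the walk along $\gamma_j$: the arcs of $\gamma_j$ through which $c$ exits $R_j$ may lie on chords that were discarded during shortcutting, so $c$ need not meet any chord that survives into $C_j$. The same lack of control undermines vertex-disjointness across different $k$; the sentence ``uses the threshold spacing of two together with the flexibility in the shortcutting step'' is not an argument, and indeed a single chord can contribute arcs to several $\gamma_k$ simultaneously. (There is also an off-by-one issue: at $k=t$ your threshold is $1$, so $R_t$ is the whole closed disk and $\gamma_t$ is the bounding circle itself, not a union of chord-arcs.)
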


\begin{proof}
    Let $F \in V(M_D)$ be a face with distance at least $2t$ from the outer face of $G$. Let $p$ be a point in the interior of $F$. Let $R_0$ be the infinite ray starting at $p$ and pointing vertically upwards. More generally, for $\theta \in \RR$, let $R_{\theta}$ be the infinite ray with endpoint $p$ that makes a clockwise angle of $\theta$ (radians) with $R_0$. In particular, $R_{\pi}$ is the ray pointing vertically downwards from $p$ and $R_{\theta + 2\pi} = R_{\theta}$ for all $\theta$.
   
    In the statement of the following claim, and throughout the paper, ``cross'' means internally intersect.
    
    \begin{claim}
        Every $R_{\theta}$ crosses at least $2t - 1$ edges of $G$.
    \end{claim}
    
    \begin{proof}
        Consider moving along $R_{\theta}$ from $p$ to the outer face. The distance in $M_D$ only changes when crossing an edge or a vertex of $G$ and changes by at most $1$ when doing so. Since each $R_{\theta}$ contains at most one vertex of $G$, it must cross at least $2t - 1$ edges.
    \end{proof}
    
    For each edge $e$ of $G$, define $I_{e} \coloneqq \set{\theta \colon e \text{ crosses } R_{\theta}}$. Since each edge is a line segment not passing through $p$, each $I_{e}$ is of the form $(a, a') + 2\pi\ZZ$ where $a < a' < a + \pi$. Also note that edges $e$ and $f$ cross exactly if $I_e \cap I_f \neq \emptyset$, $I_e \not \subseteq I_f$, and $I_f \not\subseteq I_e$.

    For a set of edges $E' \subseteq E(G)$ define $I_{E'} = \bigcup \set{I_e \colon e \in E'}$. We say that $E'$ is \defn{dominant} if $I_{E'} = \RR$ and is \defn{minimally dominant} if no proper subset of $E'$ is dominant. Note that if $e, f \in E'$ and $E'$ is minimally dominant, then $e$ and $f$ cross exactly if $I_e \cap I_f \neq \emptyset$.
    
    \begin{claim}
        If $E'$ is minimally dominant, then 
        \begin{enumerate}[label = \textnormal{(\roman*)}]
            \item every $R_{\theta}$ crosses at most two edges of $E'$,
            \item $E'$ induces a cycle in $X_D$,
            \item every edge of $G$ crosses at most four edges of $E'$.
        \end{enumerate}
    \end{claim}

    \begin{proof}
        We first prove (i). Suppose that there is some $R_{\theta}$ crossing distinct edges $e_1, e_2, e_3 \in E'$. Then $\theta \in I_{e_1} \cap I_{e_2} \cap I_{e_3}$ and $\theta + \pi \not \in I_{e_1} \cup I_{e_2} \cup I_{e_3}$. Hence we may write
        \begin{equation*}
            I_{e_i} = (a_i, a'_i) + 2\pi\ZZ, \qquad i = 1, 2, 3
        \end{equation*}
        where $\theta - \pi < a_i < \theta < a'_i < \theta + \pi$. By relabelling, we may assume that $a_1 < a_2 < a_3 < \theta$. Now, if $a'_3$ is not the largest of $a'_1, a'_2, a'_3$, then $(a_3, a'_3) \subseteq (a_1, a'_1) \cup (a_2, a'_2)$ and so $I_{e_3} \subseteq I_{e_1} \cup I_{e_2}$ which contradicts minimality of $E'$. Hence $a'_3 \geqslant a'_1, a'_2$. But then $(a_2, a'_2) \subseteq (a_1, a'_1) \cup (a_3, a'_3)$ and so $I_{e_2} \subseteq I_{e_1} \cup I_{e_3}$ which again contradicts minimality. This proves (i).
    
        We next show that $E'$ induces a connected subgraph of $X_D$. If $E'$ does not, then there is a partition $E_1 \cup E_2$ of $E'$ into non-empty sets such that no edge in $E_1$ crosses any edge in $E_2$. Since $E'$ is minimally dominant, this means $I_{E_1} \cap I_{E_2} = \emptyset$. Consider $\RR$ with the topology induced by the Euclidean metric, which is a connected space. But $I_{E_1}$ and $I_{E_2}$ are non-empty open sets that partition $\RR$. Hence, $E'$ induces a connected subgraph.
        
        We now show that $E'$ induces a 2-regular graph in $X_D$, which together with connectedness establishes (ii). Let $e \in E'$ and write $I_e = (a, a') + 2\pi\ZZ$ where $a < a' < a + \pi$. Since $E'$ is dominant, there are $f, f' \in E'$ with $a \in I_f$ and $a' \in I_{f'}$. If $f = f'$, then $I_e \subseteq I_f$ which contradicts minimality. Hence $f, f'$ are distinct and so $e$ has degree at least two in $X_D$. Suppose that $e$ has some neighbour $f''$ in $X_D$ distinct from $f, f'$. Since $I_{f''}$ is not a subset of $I_e$, it must contain at least one of $a, a'$. By symmetry, we may assume that $I_{f''}$ contains $a$. But then, for some sufficiently small $\epsilon > 0$, all of $I_e, I_f, I_{f''}$ contain $a + \epsilon$ and so $R_{a + \epsilon}$ crosses three edges of $E'$, which contradicts (i). Hence $e$ has exactly two neighbours in $E'$ which establishes (ii).
        
        Finally consider an arbitrary edge $e = uv$ of $G$. Let $R_u$ be the infinite ray from $p$ that contains $u$ and $R_v$ be the infinite ray from $p$ that contains $v$. Observe that every edge of $G$ that crosses $e$ also crosses $R_u$ or $R_v$. By (i), at most four edges in $E'$ cross $e$.
    \end{proof}
    
    For a set of edges $E' \subseteq E(G)$, say an edge $e \in E'$ is \defn{maximal in $E'$} if there is no $f \in E' \setminus \set{e}$ with $I_e \subseteq I_f$. Suppose $E'$ is dominant. Let $E'_{\textnormal{max}}$ be the set of maximal edges in $E'$. Clearly $E'_{\textnormal{max}}$ is still dominant and so has a minimally dominant subset. In particular, every dominant set of edges $E'$ has a subset $E_1$ that is minimally dominant and all of whose edges are maximal in $E'$.
    
    \begin{claim}
        Let $E' \subseteq E(G)$ and $E_1, E_2 \subseteq E'$. Suppose that all the edges of $E_1$ are maximal in $E'$ and that $E_2$ is dominant. Then every edge in $E_1$ crosses at least two edges in $E_2$.
    \end{claim}
    
    \begin{proof}
        Let $e_1 \in E_1$ and write $I_{e_1} = (a, a') + 2\pi\ZZ$ where $a < a' < a + \pi$. Since $E_2$ is dominant, there are $e_2, e_3 \in E_2$ with $a \in I_{e_2}$ and $a' \in I_{e_3}$. If $e_2 = e_3$, then $I_{e_1} \subseteq I_{e_2}$, which contradicts the maximality of $e_1$ in $E'$.
        
        By symmetry, it suffices to check that $e_1$ and $e_2$ cross. Note that for some sufficiently small $\varepsilon > 0$, $a + \varepsilon$ is in both $I_{e_1}$ and $I_{e_2}$ and so $I_{e_1} \cap I_{e_2} \neq \emptyset$. As $a \in I_{e_2} \setminus I_{e_1}$ we have $I_{e_2} \not\subseteq I_{e_1}$. Finally, the maximality of $e_1$ in $E'$ means $I_{e_1} \not\subseteq I_{e_2}$. Hence $e_1$ and $e_2$ do indeed cross.
    \end{proof}
    
    We are now ready to complete the proof. Note that a set of edges is dominant exactly if it crosses every $R_{\theta}$. By the first claim, $E = E(G)$ is dominant. Let $E_1 \subseteq E$ be minimally dominant such that every edge of $E_1$ is maximal in $E$. By part (i) of the second claim, every $R_{\theta}$ crosses at most two edges of $E_1$ and so, by the first claim, crosses at least $2t - 3$ edges of $E \setminus E_1$. Hence, $E \setminus E_1$ is dominant. Let $E_2 \subseteq E \setminus E_1$ be minimally dominant such that every edge of $E_2$ is maximal in $E \setminus E_1$. Continuing in this way we obtain pairwise disjoint $E_1, E_2, \dotsc, E_t \subset E$ such that, for all $i$,
    \begin{itemize}
        \item $E_i$ is minimally dominant,
        \item every edge of $E_i$ is maximal in $E \setminus (\bigcup_{i' < i} E_{i'})$,
        \item every $R_{\theta}$ crosses at most two edges of $E_i$,
        \item every $R_{\theta}$ crosses at least $2(t - i) - 1$ edges of $E \setminus (\bigcup_{i' \leq i} E_{i'})$.
    \end{itemize}
    By part (ii) of the second claim, every $E_i$ induces a cycle $C_i$ in $X_D$. Let $i < j$ and $E' \coloneqq E \setminus (\bigcup_{i' < i} E_{i'})$. Then $E_i, E_j \subseteq E'$ and every edge of $E_i$ is maximal in $E'$. Hence, by the third claim, every edge in $E_i$ crosses at least two edges in $E_j$. In particular, every vertex of $C_i$ has at least two neighbours in $C_j$.
    
    Finally, by part (iii) of the second claim, every vertex of $X_D$ has at most four neighbours in any $C_i$.
\end{proof}

\section{Structural Properties of Circular Drawings}\label{SectionCircularStructure}

\Cref{TreewidthMapUB} says that for any drawing $D$ of a graph $G$, the radius of $M_D$ provides an upper bound for $\tw(G)$ and $\tw(X_D)$. For a general drawing it is impossible to relate $\tw(X_D)$ to $\tw(G)$. Firstly, planar graphs can have arbitrarily large treewidth (for example, the ($n \times n$)-grid has treewidth $n$) and admit drawings with no crossings. In the other direction, $K_{3, n}$ has treewidth 3 and crossing number $\Omega(n^2)$, as shown by \citet{Kleitman70}. In particular, the crossing graph of any drawing of $K_{3, n}$ has average degree linear in $n$ and thus has arbitrarily large complete minors~\citep{Mader67,Mader68} and so arbitrarily large treewidth.

Happily, this is not so for circular drawings. Using the tools in \cref{SectionTools} we show that if a graph $G$ has large treewidth, then the crossing graph of any circular drawing of $G$ has large treewidth. In fact, the crossing graph must contain a large (topological) complete graph minor (see \cref{twHadwiger,twHajos}). In particular, if $X_D$ is $K_t$-minor-free, then $G$ has small treewidth. We further show that if $X_D$ is $K_t$-minor-free, then $G$ does not contain a subdivision of $K_{2, 4t}$ (\cref{NoK2kSubdivision}). Using these results, we deduce a product structure theorem for $G$ (\cref{ProductStructureCircular}).

In the other direction, we ask what properties of a graph $G$ guarantee that it has a circular drawing $D$ where $X_D$ has no $K_t$-minor. Certainly $G$ must have small treewidth. Adding the constraint that $G$ does not contain a subdivision of $K_{2, f(t)}$ is not sufficient (see \cref{TopologicalMinorCounterExample}) but a bounded maximum degree constraint is: we show that if $G$ has bounded maximum degree and bounded treewidth, then $G$ has a circular drawing where the crossing graph has bounded treewidth (\cref{twDegree}).

We also show that there are graphs with arbitrarily large complete graph minors that admit circular drawings whose crossing graphs are $2$-degenerate (see \cref{2Degen}). 

\subsection{Necessary Conditions for \texorpdfstring{$K_t$}{Kt}-Minor-Free Crossing Graphs}

This subsection studies the structure of graphs that have circular drawings whose crossing graph is (topological) $K_t$-minor-free. Much of our understanding of the structure of these graphs is summarised by the next four results (\cref{twHadwiger,NoK2kSubdivision,twHajos,ProductStructureCircular}). 

\twHadwiger*

\NoSubdivision*

\begin{thm}\label{twHajos}
    If a graph $G$ has a circular drawing where the crossing graph has no topological $K_t$-minor, then $G$ has treewidth at most $6t^2 + 6t + 1$.
\end{thm}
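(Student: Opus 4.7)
The plan is contrapositive: assume $\tw(G) \geq 6t^2 + 6t + 2$ and extract a topological $K_t$-minor in $X_D$. By \cref{TreewidthMapUB}, this lower bound on $\tw(G)$ forces $\rad(M_D) \geq t^2 + t - \tfrac{5}{6}$, and integrality upgrades this to $\rad(M_D) \geq t^2 + t = 2\binom{t+1}{2}$. Applying \cref{Cycles} with parameter $\binom{t+1}{2}$ then supplies vertex-disjoint induced cycles $C_1, \ldots, C_{\binom{t+1}{2}}$ in $X_D$ such that for all $i < j$, every vertex of $C_i$ has at least two neighbours in $C_j$.

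Next I would assemble a $K_t$-subdivision. Exploiting $\binom{t+1}{2} = t + \binom{t}{2}$, I designate $C_1, \ldots, C_t$ as \emph{branch cycles} and $C_{t+1}, \ldots, C_{\binom{t+1}{2}}$ as \emph{intermediate cycles}, and fix a bijection $\sigma$ from the unordered pairs $\{j, j'\} \subseteq \{1, \ldots, t\}$ to the intermediate cycles. Choose one branch vertex $v_j \in C_j$ for each $j \in \{1, \ldots, t\}$. For each pair $\{j, j'\}$ with $j < j'$, set $C_m \coloneqq \sigma(\{j, j'\})$; since $m \geq t + 1 > j'$, the conclusion of \cref{Cycles} guarantees that both $v_j$ and $v_{j'}$ have at least two neighbours in $C_m$. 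Pick a neighbour $u_j$ of $v_j$ in $C_m$ and then a neighbour $u_{j'}$ of $v_{j'}$ in $C_m$ with $u_{j'} \neq u_j$ (possible because $v_{j'}$ has at least two such neighbours); concatenating the edges $v_j u_j$ and $u_{j'} v_{j'}$ with either arc of $C_m$ from $u_j$ to $u_{j'}$ gives a path $P_{\{j, j'\}}$ from $v_j$ to $v_{j'}$ whose interior lies entirely inside $C_m$.

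The paths will be internally disjoint: since distinct pairs map to distinct intermediate cycles under $\sigma$, and the intermediate cycles are pairwise vertex-disjoint and disjoint from the branch cycles housing the $v_j$, the interiors of the $\binom{t}{2}$ paths are pairwise disjoint and disjoint from $\{v_1, \ldots, v_t\}$. Hence $\{v_1, \ldots, v_t\}$ together with $\{P_{\{j, j'\}}\}$ forms a $K_t$-subdivision in $X_D$, contradicting the hypothesis and completing the proof. The main subtlety is respecting the asymmetry of \cref{Cycles} — vertices of $C_i$ have a guaranteed neighbour count in $C_j$ only when $i < j$ — which the ordering that places all branch cycles before all intermediate cycles handles automatically. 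The bookkeeping in choosing the $u_j$ and $u_{j'}$ distinct is the only other place where the `at least two neighbours' (rather than just one) in \cref{Cycles} is genuinely used.
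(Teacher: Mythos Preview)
Your proof is correct and follows essentially the same approach as the paper: the paper factors the argument through \cref{RadiusHajos} (which bounds $\rad(M_D)$ in terms of $\hajos(X_D)$ via exactly your construction of a $K_t$-subdivision from the cycles of \cref{Cycles}) and then applies \cref{TreewidthMapUB}, while you inline these two steps into a single contrapositive argument. One incidental remark: your care in choosing $u_j \neq u_{j'}$ is unnecessary, since if they coincided the length-$2$ path $v_j u_j v_{j'}$ would serve just as well, so the ``at least two neighbours'' clause of \cref{Cycles} is not actually needed here.
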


From these we may deduce a product structure theorem for graphs that have a circular drawing whose crossing graph is $K_t$-minor-free. For two graphs $G$ and $H$, the \defn{strong product} $G \boxtimes H$ is the graph with vertex-set $V(G) \times V(H)$, and with an edge between two vertices $(v,w)$ and $(v',w')$ if and only if
$v = v'$ and $ww' \in E(H)$, or $w = w'$ and $vv' \in E(G)$, or $vv' \in E(G)$ and $ww' \in E(H)$.
Campbell et al.~\citep[Prop.~55]{UTW} showed that if a graph $G$ is $K_{2, t}$-topological minor-free and has treewidth at most $k$, then $G$ is isomorphic to a subgraph of $H \boxtimes K_{\OO(t^2k)}$ where $\tw(H) \leq 2$. Thus \cref{twHadwiger,NoK2kSubdivision} imply the following product structure result.

\begin{cor}\label{ProductStructureCircular}
    If a graph $G$ has a circular drawing where the crossing graph has no $K_t$-minor, then $G$ is isomorphic to a subgraph of $H \boxtimes K_{\OO(t^3)}$ where $\tw(H)\leq 2$.
\end{cor}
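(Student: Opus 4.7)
The plan is to combine the three ingredients already in hand: the treewidth bound \cref{twHadwiger}, the excluded topological minor \cref{NoK2kSubdivision}, and the cited product structure theorem of Campbell et al.\ for graphs of bounded treewidth that exclude some $K_{2,s}$ as a topological minor.

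First, let $G$ have a circular drawing $D$ such that $X_D$ has no $K_t$-minor. I would invoke \cref{twHadwiger} directly on this hypothesis to obtain $\tw(G) \leq 12t - 23$. In particular, $\tw(G) = O(t)$. Next, I would invoke \cref{NoK2kSubdivision} under the same hypothesis to deduce that $G$ does not contain $K_{2,4t}$ as a topological minor, so $G$ is $K_{2,4t}$-topological-minor-free.

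At this stage, $G$ satisfies the hypotheses of \citep[Prop.~55]{UTW} with $t$ (in their statement) set to $4t$ and $k$ set to $12t - 23$. Applying that proposition yields a graph $H$ with $\tw(H) \leq 2$ such that $G$ is isomorphic to a subgraph of
\begin{equation*}
    H \boxtimes K_{O((4t)^2 \cdot (12t-23))} = H \boxtimes K_{O(t^3)},
\end{equation*}
which is exactly the conclusion.

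There is no genuine obstacle here: the work has been done in \cref{twHadwiger,NoK2kSubdivision}, and the cited product structure theorem is applied as a black box. The only routine point to note is tracking the dependence on $t$: the topological-minor bound contributes $(4t)^2 = O(t^2)$ and the treewidth bound contributes another factor of $O(t)$, giving the $O(t^3)$ clique factor claimed.
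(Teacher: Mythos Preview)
Your proposal is correct and matches the paper's approach exactly: the paper derives the corollary in one sentence by combining \cref{twHadwiger}, \cref{NoK2kSubdivision}, and the cited Proposition~55 of Campbell et~al.\ with parameters $4t$ and $12t-23$, yielding the $O(t^3)$ clique factor just as you computed.
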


En route to proving these results, we use the cycle structure built by \cref{Cycles} to find (topological) complete minors in the crossing graph of circular drawings. We first show that the treewidth and Hadwiger number of $X_D$ as well as the radius of $M_D$ are all linearly tied.

\begin{lem}\label{CrossingGraphParams} 
    For every circular drawing $D$,
        \begin{equation*}
            \tw(X_D) \leq 6 \rad(M_D) + 7 \leq 12 \, h(X_D) - 11 \leq 12 \tw(X_D) + 1.
        \end{equation*}
\end{lem}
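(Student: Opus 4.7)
The first inequality is exactly the second bound of \cref{TreewidthMapUB}, and the third is the textbook fact $h(G)\le\tw(G)+1$ applied to $X_D$ (multiply by $12$ and rearrange). All the substance is in the middle inequality $6\rad(M_D)+7\le 12h(X_D)-11$, equivalently $h(X_D)\ge\tfrac{1}{2}(\rad(M_D)+3)$; this is a small refinement of \cref{Cycles}.

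Write $r=\rad(M_D)$ and set $t=\lfloor r/2\rfloor$. Assuming $r\ge 2$ (so $t\ge 1$; the handful of degenerate drawings with $r\le 1$ are checked directly), \cref{Cycles} produces $t$ pairwise vertex-disjoint induced cycles $C_1,\dotsc,C_t$ in $X_D$ such that for all $i<j$, every vertex of $C_i$ has at least two neighbours in $C_j$. Naively contracting each $C_i$ gives only a $K_t$-minor, i.e.\ $h(X_D)\ge t$, which is short by exactly $2$. The key observation is that a single cycle is already a $K_3$-minor, so opening $C_1$ up into three arcs buys the two extra branch sets.

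Concretely, partition $V(C_1)$ into three non-empty consecutive arcs $A_1,A_2,A_3$; each is a path and the three are pairwise joined by cycle edges of $C_1$. Take the $t+2$ branch sets $A_1,A_2,A_3,V(C_2),\dotsc,V(C_t)$: they are pairwise disjoint and connected. For pairwise adjacency, the three arcs form a triangle via cycle edges of $C_1$; each $A_i$ is non-empty and contains a vertex of $C_1$, which by \cref{Cycles} has at least two neighbours in each $V(C_j)$ for $j\ge 2$; and for $2\le j<k\le t$ the sets $V(C_j),V(C_k)$ are adjacent for the same reason. Hence $X_D$ has a $K_{t+2}$-minor, so $h(X_D)\ge t+2\ge\tfrac{r+3}{2}$, which rearranges to $12h(X_D)-11\ge 6r+7$. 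The only obstacle worth naming is spotting this factor-of-$2$ slack in a direct application of \cref{Cycles}: once one opens a cycle into three mutually adjacent arcs, everything else is bookkeeping.
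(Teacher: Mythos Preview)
Your argument is essentially the paper's own proof: both dispatch the first and third inequalities the same way, and for the middle inequality both invoke \cref{Cycles} and then split (equivalently, contract) $C_1$ into three mutually adjacent arcs to gain two extra branch sets and produce a $K_{t+2}$-minor. The only cosmetic difference is that the paper phrases it as a contradiction (``if $\rad(M_D)\ge 2h(X_D)-2$ then $X_D$ has a $K_{h(X_D)+1}$-minor''), whereas you argue directly; your remark about ``spotting the factor-of-$2$ slack'' is exactly the triangle trick the paper uses.
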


\begin{proof}
    The first inequality is exactly \cref{TreewidthMapUB}, while the final one is the well-known fact that $h(G)\leq \tw(G)+1$ for every graph $G$. To prove the middle inequality we need to show that for any circular drawing $D$,
    \begin{equation}\label{eqRadiusHadwiger}
        \rad(M_D) \leq 2\, h(X_D) - 3.
    \end{equation}
    Let $t \coloneqq h(X_D)$ and suppose, for a contradiction, that $\rad(M_D) \geq 2t - 2$. By \cref{Cycles}, $X_D$ contains $t - 1$ vertex disjoint cycles $C_1, \dotsc, C_{t - 1}$ such that for all $i < j$ every vertex of $C_i$ has a neighbour in $C_j$. Contracting $C_1$ to a triangle and each $C_i$ ($i \geq 2$) to a vertex gives a $K_{t + 1}$-minor in $X_D$. This is the required contradiction.
\end{proof}

Clearly the Haj\'{o}s number of a graph is upper bounded by the Hadwiger number. Our next lemma implies that the Haj\'{o}s number of $X_D$ is quadratically tied to the radius of $M_D$ and to the treewidth and Hadwiger number of $X_D$. 

\begin{lem}\label{RadiusHajos}
    For every circular drawing $D$,
    \begin{equation*}
        \rad(M_D) \leq \hajos(X_D)^2 + 3\, \hajos(X_D) + 1.
    \end{equation*}
\end{lem}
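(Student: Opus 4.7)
My plan is to mirror the minor-based argument used for \cref{CrossingGraphParams} but build a genuine topological $K_{t+1}$-minor where $t = \hajos(X_D)$, for which a single cycle per branch vertex is insufficient: one needs an extra cycle for every pair of branch vertices, and this is exactly what drives the quadratic loss. Concretely, I would argue by contrapositive: suppose $\rad(M_D) \ge t^2 + 3t + 2 = (t+1)(t+2)$. Since $(t+1)(t+2)$ is even, \cref{Cycles} applied with parameter $s \coloneqq (t+1)(t+2)/2$ produces vertex-disjoint induced cycles $C_1, \dotsc, C_s$ in $X_D$ with the property that for all $i < j$ every vertex of $C_i$ has at least two neighbours in $C_j$. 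I will then exhibit a topological $K_{t+1}$-minor in $X_D$, contradicting $\hajos(X_D) = t$, and conclude $\rad(M_D) \le t^2 + 3t + 1$.

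The construction designates the first $t+1$ cycles as branch cycles and the remaining $\binom{t+1}{2}$ cycles as connector cycles; crucially, $(t+1) + \binom{t+1}{2} = s$ so this partition uses every cycle. Pick an arbitrary branch vertex $v_i \in C_i$ for each $i \in \{1, \dotsc, t+1\}$, and fix a bijection $h$ from the pairs $\{i,j\} \subseteq \{1, \dotsc, t+1\}$ to $\{t+2, \dotsc, s\}$. For each pair $\{i,j\}$ with $i < j$, use the guarantee from \cref{Cycles} (applied twice, once with the pair $(i, h(\{i,j\}))$ and once with $(j, h(\{i,j\}))$, both of which are increasing) to select a neighbour $u_{ij}$ of $v_i$ and a neighbour $w_{ij}$ of $v_j$ lying in $C_{h(\{i,j\})}$, chosen so that $u_{ij} \ne w_{ij}$. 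The latter is always possible because $v_i$ and $v_j$ each have at least two neighbours in the cycle, so a straightforward case check on $|N(v_i) \cap N(v_j) \cap C_{h(\{i,j\})}|$ forces a valid selection. Finally, take the $v_i$--$v_j$ path to be $v_i, u_{ij}, \dotsc, w_{ij}, v_j$, where the middle portion is any path from $u_{ij}$ to $w_{ij}$ inside the cycle $C_{h(\{i,j\})}$.

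It then remains to check the three conditions for a topological $K_{t+1}$-minor. Internal disjointness is immediate: every internal vertex of the $v_i$--$v_j$ path lies in $C_{h(\{i,j\})}$, and the connector cycles are pairwise disjoint and also disjoint from all branch cycles (they have different indices in the family $C_1, \dotsc, C_s$). The branch vertex $v_i$ has degree exactly $t$ in the minor because its $t$ incident paths use first edges $v_i u_{ij}$ whose endpoints $u_{ij}$ lie in pairwise distinct connector cycles. And no path passes internally through any branch vertex, since branch vertices all live in $C_1, \dotsc, C_{t+1}$ while path interiors live in $C_{t+2}, \dotsc, C_s$. This yields the desired $K_{t+1}$ topological minor in $X_D$.

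I expect the only genuinely fiddly step to be the $u_{ij} \ne w_{ij}$ selection and the bookkeeping that verifies all the inequalities $i < h(\{i,j\})$ and $j < h(\{i,j\})$, which are needed to apply the asymmetric two-neighbour condition in \cref{Cycles}; these are exactly why it is convenient to place the branch cycles at the bottom of the index range and the connector cycles above. Everything else is counting. The resulting bound $\rad(M_D) \le t^2 + 3t + 1$ is what the lemma claims, and combined with \cref{CrossingGraphParams} this establishes that $\hajos(X_D)$ is quadratically tied to $\rad(M_D)$, $\tw(X_D)$, and $h(X_D)$ for circular drawings.
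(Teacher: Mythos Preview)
Your proposal is correct and follows essentially the same approach as the paper: both arguments assume $\rad(M_D)\ge \hajos(X_D)^2+3\hajos(X_D)+2$, invoke \cref{Cycles} to obtain $\binom{\hajos(X_D)+2}{2}$ nested cycles, designate the lowest-indexed $\hajos(X_D)+1$ cycles as branch cycles and biject the remaining $\binom{\hajos(X_D)+1}{2}$ cycles with the edges of $K_{\hajos(X_D)+1}$, and route each path through its assigned connector cycle. The paper uses the reparametrisation $t=\hajos(X_D)+1$ and simply asserts the existence of the $(v_i,v_j)$-path through $C_{\phi(ij)}$ without the $u_{ij}\ne w_{ij}$ bookkeeping (which is in fact unnecessary, since if $u_{ij}=w_{ij}$ the length-two path $v_i u_{ij} v_j$ already works), but otherwise the arguments are the same.
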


\begin{proof}
   Let $t = \hajos(X_D)+1$ and suppose, for a contradiction, that $\rad(M_D) \geq t^2+t$. By \cref{Cycles}, $X_D$ contains $(t^2+t)/2$ vertex disjoint cycles $C_1, \dotsc, C_{(t^2+t)/2}$ such that for all $i < j$ every vertex of $C_i$ has a neighbour in $C_j$. For each $i\in \set{1,\dots,t}$, let $v_i\in V(C_i)$. We assume that $V(K_t) = \set{1,\dots,t}$ and let $\phi \colon E(K_t)\to \set{t + 1,\dots,(t^2+t)/2}$ be a bijection. Then for each $ij\in E(K_t)$, there is a $(v_i,v_j)$-path $P_{ij}$ in $X_D$ whose internal vertices are contained in $V(C_{\phi(ij)})$. Since $\phi$ is a bijection, it follows that $(P_{ij}\colon ij\in E(K_t))$ defines a topological $K_t$-minor in $X_D$, a contradiction.
\end{proof}

We are now ready to prove \cref{twHadwiger,twHajos}. 

\begin{proof}[Proof of \cref{twHadwiger}]
    Let $D$ be a circular drawing of $G$ with $h(X_D) \leq t - 1$. By \eqref{eqRadiusHadwiger}, $\rad(M_D) \leq 2t - 5$. Finally, by \cref{TreewidthMapUB}, $\tw(G) \leq 12t - 23$.
\end{proof}

\begin{proof}[Proof of \cref{twHajos}]
    Let $D$ be a circular drawing of $G$ with $\hajos(X_D) \leq t - 1$. By \cref{RadiusHajos}, $\rad(M_D) \leq t^2 + t - 1$. Finally, by \cref{TreewidthMapUB}, $\tw(G) \leq 6t^2 + 6t + 1$.
\end{proof}

We now show that the bound on $\tw(G)$ in \cref{twHadwiger} is within a constant factor of being optimal. Let $G_n$ be the ($n \times n$)-grid, which has treewidth $n$ (see \citep{HW17}). \cref{twHadwiger} says that in every circular drawing $D$ of $G_n$, the crossing graph $X_D$ has a $K_t$-minor, where $t = \Omega(n)$. On the other hand, let $D$ be the circular drawing of $G_n$ obtained by ordering the vertices $R_1,R_2,\dots,R_n$, where $R_i$ is the set of vertices in the $i$-th row of $G_n$ (ordered arbitrarily). Let $E_i$ be the set of edges in $G_n$ incident to vertices in $R_i$; note that $\abs{E_i} \leq 3n-1$. If two edges cross, then they have end-vertices in some $E_i$. Thus $(E_1,\dots,E_n)$ is a path-decomposition of $X_D$ of width at most $3n$. In particular, $X_D$ has no $K_{3n+2}$-minor. Hence, the bound on $\tw(G)$ in \cref{twHadwiger} is within a constant factor of optimal. See \cite{SSSV03,SSSV04} for more on circular drawings of grid graphs.

Now we turn to subdivisions and the proof of \cref{NoK2kSubdivision}. As a warm-up, we give a simple proof in the case of no division vertices. 

\begin{prop}\label{NoK2k}
    For every $k\in \NN$, for every circular drawing $D$ of $K_{2, 4k - 1}$, $X_D$ contains $K_{k,k}$ as a subgraph.
\end{prop}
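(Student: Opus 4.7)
The plan is to combine pigeonhole with the standard interleaving criterion for chord crossings (two chords of a circle cross iff their endpoints alternate around the circle). Write the two partite sets of $K_{2,4k-1}$ as $\{u_1,u_2\}$ and $\{v_1,\dots,v_{4k-1}\}$. In any circular drawing $D$, the two points representing $u_1$ and $u_2$ split the circle into two open arcs, and each $v_j$ lies in one of them (the drawing axioms forbid two vertices at the same point). Since $\lceil(4k-1)/2\rceil = 2k$, one of the two arcs, call it $A$, contains at least $2k$ of the $v_j$'s; relabel these, in the cyclic order along $A$ from $u_1$ to $u_2$, as $w_1,\dots,w_{2k}$.

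Next I would set $e_i := u_1 w_i$ and $f_i := u_2 w_i$ for $1 \le i \le 2k$ and verify the key pattern:
\begin{equation*}
    \text{for } i \neq j, \quad e_i \text{ and } f_j \text{ cross in } D \iff i > j.
\end{equation*}
This is a one-line check from the interleaving criterion: the four points $u_1, w_{\min(i,j)}, w_{\max(i,j)}, u_2$ appear in this cyclic order along $A$, so the only interleaving pair among the chords $\{u_1 w_i, u_1 w_j, u_2 w_i, u_2 w_j\}$ is $\{u_1 w_{\max(i,j)},\, u_2 w_{\min(i,j)}\}$, i.e.\ $\{e_{\max(i,j)}, f_{\min(i,j)}\}$.

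Finally, let $L := \{e_{k+1},\dots,e_{2k}\}$ and $R := \{f_1,\dots,f_k\}$. These are disjoint (no edge is both of the form $u_1 w$ and $u_2 w$), and each has size $k$. For every $e_i \in L$ and $f_j \in R$, $i \ge k+1 > k \ge j$, so by the pattern above $e_i$ and $f_j$ cross; hence $e_i f_j \in E(X_D)$. This produces the required $K_{k,k}$ subgraph of $X_D$.

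There is no substantive obstacle here; the result is essentially a pigeonhole argument plus a single interleaving check, and the same idea will clearly need to be upgraded (with a more delicate argument) to handle the topological-minor strengthening in \cref{NoK2kSubdivision} where paths of subdivided edges replace single edges.
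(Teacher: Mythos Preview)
Your proof is correct and takes essentially the same approach as the paper's: pigeonhole one arc to contain at least $2k$ of the degree-$2$ vertices, then pair the $k$ chords from one hub to the far half with the $k$ chords from the other hub to the near half, noting that each such pair interleaves. Up to relabelling (your $e_{k+1},\dots,e_{2k}$ and $f_1,\dots,f_k$ are exactly the paper's $f_1,\dots,f_k$ and $e_1,\dots,e_k$), the arguments are identical; your explicit verification of the interleaving criterion is a slight expansion of what the paper leaves implicit.
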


\begin{proof}
    Let the vertex classes of $K_{2, 4k - 1}$ be $X$ and $Y$, where $X = \set{x, y}$ and $\abs{Y} = 4k - 1$. Vertices $x$ and $y$ split the circle into two arcs, one of which must contain at least $2k$ vertices from $Y$. Label these vertices $x, v_1, \dotsc, v_s, y$ where $s \geq 2k$ in order around the circle. For $i \in \set{1,\dots,k}$ define the edges $e_i = yv_i$ and $f_i = xv_{k + i}$. The $e_i$ and $f_i$ are vertices in $X_D$, and for all $i$ and $j$, edges $e_i$ and $f_j$ cross, as required.
\end{proof}

We now work towards the proof of \cref{NoK2kSubdivision}.

A \defn{linear drawing} of a graph $G$ places the vertices on the x-axis with edges drawn as semi-circles above the x-axis. In such a drawing, we consider the vertices of $G$ to be elements of $\RR$ given by their x-coordinate. Such a drawing can be wrapped to give a circular drawing of $G$ with an isomorphic crossing graph.  For an edge $uv\in E(G)$ where $u<v$, define \defn{$I_{uv}$} to be the open interval $(u,v)$. For a set of edges $E' \subseteq E(G)$, define {\defn{$I_{E'}$}\,$\coloneqq  \bigcup \set{I_e \colon e \in E'}$}. Two edges $uv,xy\in E(G)$ where $u<v$ and $x<y$ are \defn{nested} if $u<x<y<v$ or $x<u<v<y$.

\begin{lem}\label{K22ab}
    Let $a,b\in \RR$ where $a<b$, and let $D$ be a linear drawing of a graph $G$ where $G$ consists of two internally vertex-disjoint paths $P_1=(v_1,\dots,v_n)$ and $P_2=(u_1,\dots,u_m)$ such that $u_1,v_1\leq a<b\leq u_m,v_n$. Then there exists $E'\subseteq E(G)$ such that $(a,b)\subseteq I_{E'}$ and $E'$ induces a connected graph in $X_D$. Moreover, for $x\in \set{a,b}$, if $x\not \in V(P_1)\cap V(P_2)$, then $x\in I_{E'}$.
\end{lem}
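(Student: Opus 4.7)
The plan is to take $E'$ to be the set of \emph{peaks} of $E^* \coloneqq \set{e \in E(G) : I_e \cap (a, b) \neq \emptyset}$, where a peak is an $e \in E^*$ admitting no $f \in E^*$ with $I_e \subsetneq I_f$. Since every edge in $E^*$ is contained in some peak, $I_{E'} = I_{E^*}$, so it suffices to show (i) $(a, b) \subseteq I_{E^*}$, (ii) the peaks $E'$ induce a connected subgraph of $X_D$, and (iii) the moreover clause.

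For (i), first I would observe that $V(P_1) \cap V(P_2) \cap (a, b) = \emptyset$: since $P_1$ and $P_2$ are internally vertex-disjoint, any shared vertex is an endpoint, and the four endpoints $v_1, v_n, u_1, u_m$ all lie outside $(a, b)$ by hypothesis. Hence any $c \in (a, b)$ avoids $V(P_j)$ for some $j \in \set{1, 2}$. Walking $P_j$ from its start-vertex (at position $\leq a < c$) to its end-vertex (at position $\geq b > c$), no vertex of $P_j$ sits at $c$, so some consecutive pair of vertices must straddle $c$; the corresponding edge has $c$ in its open interval and lies in $E^*$.

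For (ii), the crucial observation is that two peaks $e, f \in E'$ with $I_e \cap I_f \neq \emptyset$ cannot be nested (by maximality) and hence cross in $D$, so adjacency inside $E'$ in $X_D$ coincides with interval overlap. Let $J$ be the connected component of the open set $I_{E'} \subseteq \RR$ that contains $(a, b)$. Every peak meets $(a, b) \subseteq J$ and is itself a connected open interval, so every peak's interval lies in $J$. Fixing any $e_0 \in E'$, the set $S$ of points of $J$ covered by peaks overlap-reachable from $e_0$ is nonempty and open; its complement in $J$ is also open by the same overlap argument (any peak covering a point near $S$ would overlap a reachable peak). Connectedness of $J$ then forces $S = J$, so every peak is overlap-reachable from $e_0$, and $E'$ is connected in $X_D$.

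Finally, for (iii), suppose $x = a \notin V(P_1) \cap V(P_2)$; by symmetry say $a \notin V(P_1)$, so $v_1 < a < v_n$ strictly. The same walking argument applied to $P_1$ with $c = a$ produces an edge $e$ of $P_1$ whose open interval strictly contains $a$; this interval extends past $a$ into $(a, b)$, so $e \in E^*$, and the peak containing $e$ also has $a$ in its interval. The case $x = b$ is symmetric. The main subtlety I anticipate is the connectedness claim in (ii): the clopen-component argument in $\RR$ is essential because peaks can only be ``linked'' through overlap rather than through a shared covered vertex, and the maximality of peaks is exactly what converts overlap into a crossing edge of $X_D$.
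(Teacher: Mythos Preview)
Your argument is correct and runs along the same lines as the paper's: first show that $(a,b)\subseteq I_{E(G)}$ via a walk along one of the two paths, then pass to a subfamily $E'$ in which no two intervals are nested, and finally use a clopen argument on an open interval of $\RR$ to convert ``overlap $\Rightarrow$ cross'' into connectedness of $X_D[E']$.

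The one real difference is the choice of $E'$. The paper takes $E'$ to be a \emph{minimal} subset of $E(G)$ with $(a,b)\subseteq I_{E'}$; minimality forces no two edges of $E'$ to be nested, and the clopen argument is then run on $(a,b)$ itself. You instead take $E'$ to be the \emph{maximal} elements (your ``peaks'') of $E^*=\{e:I_e\cap(a,b)\neq\emptyset\}$ and run the clopen argument on the component $J$ of $I_{E'}$ containing $(a,b)$ (in fact $J=I_{E'}$, since every peak meets $(a,b)$). The payoff of your choice is that the ``moreover'' clause comes for free: any edge whose interval contains $x\in\{a,b\}$ lies below some peak already in $E'$, so $x\in I_{E'}$ automatically. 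In the paper's version one must instead append an extra edge through $x$ to $E'$ and verify that it crosses something already present. Either way the core mechanism---non-nested overlapping chords must cross, and an open cover of a real interval by pairwise non-nested intervals is ``overlap-connected''---is identical.
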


\begin{proof}
We first show the existence of $E'$. Observe that $(a,b)\subseteq I_{E(P_1)}\cup \set{v_1,\dots,v_n}$. If $G$ contains an edge $uv$ where $u\leq a<b\leq v$, then we are done by setting $E'=\set{uv}$. So assume that $G$ has no edge of that form. Then there is a vertex $v\in V(P_1)$ such that $a<v<b$. 
Each such vertex $v$ is not in $V(P_2)$, implying $v\in I_{E(P_2)}$. Therefore $(a,b)\subseteq I_{E(G)}$. Let $E'$ be a minimal set of edges of $E(G)$ such that $(a,b)\subseteq I_{E'}$. By minimality, no two edges in $E'$ are nested. We claim that $X_D[E']$ is connected. If not, then there is a partition $E_1 \cup E_2$ of $E'$ into non-empty sets such that no edge in $E_1$ crosses any edge in $E_2$. Since $E'$ is minimal, this means $I_{E_1} \cap I_{E_2} = \emptyset$. Consider $(a,b)$ with the topology induced by the Euclidean metric, which is a connected space. But $I_{E_1}\cap (a,b)$ and $I_{E_2}\cap (a,b)$ are non-empty open sets that partition $(a,b)$, a contradiction. Hence, $X_D[E']$ is connected. 
    
Finally, let $x\in \set{a,b}$ and suppose that $x\not \in V(P_1)\cap V(P_2)$. Then $G$ has an edge $uv$ such that $u<x<v$. If $x\in I_{E'}$, then we are done. Otherwise, $E'$ contains an edge incident to $x$. Since $a<u<b$ or $a<v<b$, it follows that $uv$ crosses an edge in $E'$. So adding $uv$ to $E'$ maintains the connectivity of $X_D[E']$ and now $x\in I_{E'}$.
\end{proof}

\begin{lem}\label{K23ab}
    Let $G$ be a subdivision of $K_{2,3}$ and let $x,y\in V(G)$ be the vertices with degree $3$. For every circular drawing $D$ of $G$, there exists a component $Y$ in $X_D$ that contains an edge incident to $x$ and an edge incident to $y$.
\end{lem}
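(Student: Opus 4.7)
The plan is to apply Lemma~\ref{K22ab} after carefully linearizing the circular drawing $D$, and then use the third $xy$-path to force the resulting connected set of edges in $X_D$ to reach both $x$ and $y$.

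Linearize $D$ by cutting the circle at a point immediately counter-clockwise of $x$; in the resulting linear drawing, $x$ is the leftmost vertex at position $a=0$, $y$ sits at some position $b>0$, and every other vertex is at a position in $(0,b)\cup(b,\infty)$ depending on which of the two arcs between $x$ and $y$ it lies on. Let $P_1,P_2,P_3$ be the three internally disjoint $xy$-paths in $G$.

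Apply \cref{K22ab} to the pair $(P_1,P_2)$ to obtain a set $E'\subseteq E(P_1\cup P_2)$ that induces a connected subgraph in $X_D$ and satisfies $(a,b)\subseteq I_{E'}$. Let $Y$ be the $X_D$-component containing $E'$. Because $x$ is the leftmost vertex, every edge of $G$ has its left endpoint at position $\geq 0$; since $E'$ must cover points of $(a,b)$ arbitrarily close to $0$, some edge of $E'$ has left endpoint exactly $0$, and is therefore incident to $x$. So $Y$ contains an $x$-edge. If $E'$ also contains an edge incident to $y$ (one whose right endpoint is exactly $b$) we are done. Otherwise the edges covering points of $(a,b)$ close to $b$ must include some edge $f\in E'$ whose right endpoint strictly exceeds $b$: in the linearization $f$ ``passes over $y$'', and in the circular picture $f$ is a chord joining a vertex on the arc from $x$ to $y$ (positions in $(0,b)$) to a vertex on the arc from $y$ back to $x$ (positions in $(b,\infty)$).

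We now bring in the third path $P_3$. Apply \cref{K22ab} also to $(P_1,P_3)$ and $(P_2,P_3)$, giving connected subgraphs $E'_{13},E'_{23}$ of $X_D$, each covering $(a,b)$ and (by the same leftmost argument) each containing an edge incident to $x$. A pigeonhole argument on the three $x$-edges $e^x_1,e^x_2,e^x_3$ shows that some single $x$-edge lies in at least two of the sets $E',E'_{13},E'_{23}$; through that common edge, two of these three connected sets merge into a larger connected subset of $X_D$ contained in $Y$. One then analyses how the unique edge of $P_3$ incident to $y$ relates to $f$ (or to its neighbours in $E'$), splitting into cases according to which of the two arcs contains the last internal vertex of $P_3$; in each case the interval structure forces a crossing that places a $y$-edge in $Y$.

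I expect the main obstacle to be precisely this last case analysis: ruling out the scenario in which none of $E',E'_{13},E'_{23}$ contains a $y$-edge. Here one must argue that the three ``over-$y$'' edges (one per pair) combined with the internally disjoint structure of $P_1,P_2,P_3$ on both arcs produce a chain of crossings from $f$ to an edge incident to $y$. A cleaner alternative I would also attempt is to perform the same construction with a second cut placed immediately clockwise of $y$ (so that $y$ becomes rightmost), obtain a symmetric connected set $E''$ containing a $y$-edge, and then show via the shared path structure that $E'$ and $E''$ necessarily lie in the same $X_D$-component.
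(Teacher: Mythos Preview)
Your proposal has a genuine gap, which you partly acknowledge. The pigeonhole step is simply false: since $E'\subseteq E(P_1)\cup E(P_2)$, $E'_{13}\subseteq E(P_1)\cup E(P_3)$, and $E'_{23}\subseteq E(P_2)\cup E(P_3)$, the $x$-edge in each set lies in $\{e^x_1,e^x_2\}$, $\{e^x_1,e^x_3\}$, $\{e^x_2,e^x_3\}$ respectively, and nothing prevents these from being $e^x_1$, $e^x_3$, $e^x_2$ with no edge shared. Even if you could force two of the sets to merge (say by choosing the minimal $E'$ in \cref{K22ab} more carefully so that $E'$ and $E'_{13}$ both use $e^x_1$), you would still only have a larger connected set containing an $x$-edge; no part of your argument produces a crossing that reaches a $y$-edge in the ``over-$y$'' case. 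Your symmetric alternative of cutting near $y$ to build $E''$ has the same defect: you would then need to show that $E'$ and $E''$ lie in the same component of $X_D$, and the interval structure you have set up does not force any crossing between them.

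The paper's proof organises the argument differently, and the reorganisation is the missing idea. Call an edge \emph{vertical} if it joins the two arcs between $x$ and $y$ (your ``over-$y$'' edges). If there are no vertical edges, then by pigeonhole two of $P_1,P_2,P_3$ lie entirely on one arc; applying \cref{K22ab} along that arc gives a connected $E'$ in which your leftmost argument works at \emph{both} ends, since $x$ and $y$ are now the extreme points of the relevant linear drawing. If vertical edges exist, the paper orders them $e_1,\dots,e_k$ along one arc and, for each consecutive pair $e_i,e_{i+1}$, applies \cref{K22ab} not to two full $(x,y)$-paths but to two short subpaths running between $e_i$ and $e_{i+1}$ inside a single arc; here the ``moreover'' clause of \cref{K22ab} applies and links $e_i$ to $e_{i+1}$ in $X_D$. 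This shows all vertical edges lie in one component $Y$. Since every vertical edge separates $x$ from $y$ in the drawing, a pigeonhole on the three $x$-edges (two of which land on the same arc) then connects $Y$ to an $x$-edge via either a direct crossing with $e_1$ or another invocation of \cref{K22ab}; symmetrically for $y$. The crux is to work with the vertical edges as the backbone of the component, rather than with three parallel applications of \cref{K22ab} to pairs of full paths.
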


\begin{proof}
    Let $P_1,P_2,P_3$ be the internally disjoint $(x,y)$-paths in $G$. Let $\mathcal{U}=(u_1,\dots,u_{m})$ be the sequence of vertices on the clockwise arc from $x$ to $y$ (excluding $x$ and $y$). Let $\mathcal{V}=(v_1,\dots,u_{n})$ be the sequence of vertices on the anti-clockwise arc from $x$ to $y$ (excluding $x$ and $y$). Say an edge $uv\in E(G)$ is \defn{vertical} if $u\in \mathcal{U}$ and $v\in \mathcal{V}$.
    
    Suppose that no edge of $G$ is vertical. By the pigeonhole principle, we may assume that $V(P_1)\cup V(P_2)\subseteq \mathcal{U}\cup \set{x,y}$. The claim then follows by applying \cref{K22ab} along the clockwise arc from $x$ to $y$.
    
    Now assume that $E(G)$ contains at least one vertical edge. Let $e_1,\dots,e_k$ be an ordering of the vertical edges of $G$ such that if $e_i$ is incident to $u_{i'}$ and $e_{i + 1}$ is incident to $u_{j'}$, then $i'\leq j'$. In the case when $u_{i'}=u_{j'}$, then $e_i$ and $e_{i+1}$ are ordered by their endpoints in $\mathcal{V}$. 
    
    \begin{claim}
        For each $i\in \set{1,\dots,k-1}$, there exists $E_i\subseteq E(G)$ such that $E_i\cup \set{e_i,e_{i+1}}$ induces a connected subgraph of $X_D$.
    \end{claim}
    \begin{proof}
        Clearly the claim holds if $e_i$ and $e_{i + 1}$ cross or if there is an edge in $G$ that crosses both $e_i$ and $e_{i+1}$. So assume that $e_i$ and $e_{i + 1}$ do not cross and no edge crosses both $e_i$ and $e_{i+1}$. Assume $e_i=u'v'$ and $e_{i+1}=u''v''$, where $u', u'' \in \mathcal{U}$ and $v', v'' \in \mathcal{V}$. Let $j\in \set{1,2,3}$. If $P_j$ does not contain $e_i$, then $P_j$ contains neither endpoint of $e_i$. Since $e_i$ separates $x$ from $y$ in the drawing, $P_j$ contains $e_i$ or an edge that crosses $e_i$. Likewise, $P_j$ contains $e_{i+1}$ or an edge that crosses $e_{i+1}$. Let $P_j'=(p_1,\dots,p_m)$ be a vertex-minimal subpath of $P_j$ such that $p_1p_2$ is $e_i$ or crosses $e_i$, and $p_{m-1}p_m$ is $e_{i+1}$ or crosses $e_{i+1}$. By minimality, no edge in $E(P_j')\setminus \set{p_1p_2,p_{m-1}p_m}$ crosses $e_i$ or $e_{i+1}$. Therefore, by the ordering of the vertical edges, no edge in $E(P_j')\setminus \set{p_1p_2,p_{m-1}p_m}$ is vertical. As such, either $\set{p_2,\dots,p_{m-1}}\subseteq \mathcal{U}$ or $\set{p_2,\dots,p_{m-1}}\subseteq \mathcal{V}$. By the pigeonhole principle, without loss of generality, $V(P_1')\cup V(P_2')\subseteq \mathcal{U}$. Since $V(P_1')$ and $V(P_2')$ have distinct endpoints, the claim then follows by applying \cref{K22ab} along the clockwise arc between $u'$ and $u''$.
    \end{proof}
    
    It follows from the claim that all the vertical edges are contained in a single component $Y$ of $X_D$. Now consider the three edges in $G$ incident to $x$. By the pigeonhole principle, without loss of generality, two of these edges are of the form $xu_i, xu_j$ where $i<j$. Let $u_{a}$ be the vertex in $\mathcal{U}$ incident to the vertical edge $e_1$. If $a<j$, then $e_1$ crosses $xu_j$. If $a=j$, then by the ordering of the vertical edges, the path $P_i$ that contains the edge $xu_i$ also contains an edge that crosses both $e_1$ and $xu_j$. Otherwise, $j<a$ and applying \cref{K22ab} to the clockwise arc between $u_j$ and $u_{a}$, it follows that $xu_j$ is also in $Y$. By symmetry, there is an edge incident to $y$ that is in $Y$, as required.
\end{proof}

We are now ready to prove \cref{NoK2kSubdivision}.

\begin{proof}[Proof of \cref{NoK2kSubdivision}]
    Let $G$ be a subdivision of $K_{2, 4t}$ and let $D$ be a circular drawing of $G$. We show that $X_D$ contains a $K_t$-minor. Let $x,y$ be the degree $4t$ vertices in $G$. Let $\mathcal{U}=(u_1,\dots,u_{m})$ be the sequence of vertices on the clockwise arc from $x$ to $y$ (excluding $x$ and $y$). Let $\mathcal{V}=(v_1,\dots,u_{n})$ be the sequence of vertices on the anti-clockwise arc from $x$ to $y$ (excluding $x$ and $y$). Say an edge $uv\in E(G)$ is \defn{vertical} if $u\in \mathcal{U}$ and $v\in \mathcal{V}$.
    
    Let $\ell$ be the number of vertical edges in $G$. Let $k\coloneqq \min \set{\ell,t}$ and let $d\coloneqq t-k$. Then $G$ contains $4d$ paths $P_1,\dots,P_{4d}$ that contain no vertical edge. We say that $P_i$ is a \defn{$\mathcal{U}$-path} (respectively, \defn{$\mathcal{V}$-path}) if it contains an edge incident to a vertex in $\mathcal{U}$ ($\mathcal{V}$). By the pigeonhole principle, without loss of generality, $P_1,\dots,P_{2d}$ are $\mathcal{U}$-paths. By pairing the paths and then applying \cref{K22ab} to the clockwise arc from $x$ to $y$, it follows that $X_D$ contains $d$ vertex-disjoint connected subgraphs $Y_1,\dots,Y_{d}$ in $X_D$ where each $Y_i$ contains an edge (in $G$) incident to $x$ and an edge incident to $y$. Consider distinct $i,j\in \set{1,\dots,d}$. Let $xu_{i'}\in V(Y_i)$ and $xu_{j'}\in V(Y_j)$ and assume that $i'<j'$. Since $xu_{j'}$ separates $u_{i'}$ from $y$ in the drawing, and $P_1,\dots, P_{2d}$ are internally disjoint, it follows that there is an edge in $V(Y_i)$ that crosses $xu_{j'}$. So $Y_1,\dots, Y_d$ are pairwise adjacent, which form a $K_d$-minor in $X_D$. 

    Let $\tilde{E}\coloneqq \set{e_1,\dots,e_k}$ be any set of $k$ vertical edges in $G$. Since $t=d+k$, there are $4k$ internally disjoint $(x,y)$-paths distinct from $P_1,\dots,P_{4d}$, at least $3k$ of which avoid $\tilde{E}$. Grouping these paths into $k$ sets each with three paths, it follows from \cref{K23ab} that there exists $k$ vertex-disjoint connected subgraphs $Z_1,\dots,Z_{k}$ in $X_D$ where each $Z_i$ contains an edge (in $G$) incident to $x$ and an edge incident to $y$. Since each $e\in \tilde{E}$ separates $x$ and $y$ in the drawing, it follows that each $V(Y_i)$ and $V(Z_j)$ contains an edge (in $G$) that crosses $e$. Thus, by contracting each $Y_i$ into a vertex and each $Z_j\cup \set{e_j}$ into a vertex and then deleting all other vertices in $X_D$, we obtain the desired $K_{t}$-minor in $X_D$.
\end{proof}

\subsection{Sufficient Conditions for \texorpdfstring{$K_t$}{Kt}-Minor-Free Crossing Graphs}\label{SectionCircularBoundedtw}

It is natural to consider whether the converse of \cref{twHadwiger,NoK2kSubdivision} holds. That is, does there exist a function $f$ such that if a $K_{2,t}$-topological minor-free graph $G$ has treewidth at most $k$, then there is a circular drawing of $G$ whose crossing graph is $K_{f(t,k)}$-minor-free. Our next result shows that this is false in general. A \defn{$t$-rainbow} in a circular drawing of a graph is a non-crossing matching consisting of $t$ edges between two disjoint arcs in the circle.

\begin{lem}\label{TopologicalMinorCounterExample}
    For every $t\in \NN$, there exists a $K_{2,4}$-topological minor-free graph $G$ with $\tw(G)=2$ such that, for every circular drawing $D$ of $G$, the crossing graph $X_D$ contains a $K_t$-minor.
\end{lem}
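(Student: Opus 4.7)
The plan is to exhibit an explicit graph $G = G_t$ satisfying all three conditions: $\tw(G) = 2$, no topological $K_{2,4}$-minor, and $X_D$ contains a $K_t$-minor in every circular drawing $D$. The natural family to try is obtained by gluing many copies of $K_{2,3}$ (the smallest non-outerplanar series-parallel graph) along small separators in a pattern that is tight enough to force interaction across copies in any cyclic arrangement of the vertices, yet loose enough to preserve treewidth~$2$ and to avoid four internally disjoint paths between any pair of vertices.

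First I would verify $\tw(G) = 2$ via an explicit tree decomposition of width~$2$, since single-vertex gluings, $2$-sums along edges, and edge subdivisions all preserve $\tw\le 2$. Second, I would verify that $G$ contains no $K_{2,4}$-topological minor by checking, for the only candidate pairs (namely two vertices of degree $\ge 4$), that the graph has a vertex cut of size at most $3$ separating them; Menger's theorem then bounds the number of internally disjoint paths by $3$.

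The main step---and the main obstacle---is to show that in every circular drawing $D$ of $G$, the crossing graph $X_D$ contains a $K_t$-minor. Since $K_{2,3}$ is non-outerplanar, every copy contributes at least one forced crossing, but a naive gluing (disjoint copies, copies glued at a single cut vertex forming a bouquet, or a chain along shared edges) still admits a cyclic ordering placing each copy in its own arc. In such a drawing the crossings within different copies are entirely decoupled, giving $t$ disjoint components in $X_D$ and no $K_t$-minor. The construction must therefore build in a global obstruction preventing arc-separation: for instance, an auxiliary cycle or set of long chords whose endpoints are spread across many copies. Then any cyclic ordering must interleave the copies, forcing edges of distinct copies to pairwise cross. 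Each copy then becomes a branch set of $X_D$, and the forced inter-copy crossings make these branch sets pairwise adjacent, yielding the desired $K_t$-minor.

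The delicate point is balancing these requirements: enforcing enough interleaving in every cyclic ordering pushes toward dense gluing patterns that risk creating either $K_4$-minors (raising treewidth above~$2$) or additional internally disjoint paths (creating $K_{2,4}$-topological minors). A plausible approach is to mimic the M\"obius-ladder-like ``twist'' phenomenon using only series-parallel structure---for example, by attaching $K_{2,3}$-gadgets at carefully chosen positions along a long cycle, or by a recursive doubling construction that routes new edges through all existing gadgets, so that antipodal gadgets are forced to interact via a shared chord in every circular drawing.
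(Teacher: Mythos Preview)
Your proposal is a research plan, not a proof: you never actually specify the graph $G_t$, and you explicitly leave the central difficulty unresolved. You correctly observe that gluing copies of $K_{2,3}$ at cut vertices or along edges fails because any such $G$ admits a circular drawing in which the copies occupy disjoint arcs, making $X_D$ disconnected. But your proposed remedies (``an auxiliary cycle'', ``recursive doubling'', ``M\"obius-ladder-like twist'') are left as suggestions, with no construction given and no argument that any of them simultaneously (a) keeps $\tw\le 2$, (b) avoids a topological $K_{2,4}$, and (c) forces a $K_t$-minor in $X_D$ for \emph{every} circular ordering. Point (c) is the whole content of the lemma, and establishing it for ``every circular drawing'' is genuinely hard: you would need a structural lemma guaranteeing that in any cyclic ordering of $V(G)$ some $t$ gadgets pairwise interleave, and nothing in your outline supplies such a lemma.

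For comparison, the paper takes a completely different and much cleaner route. The graph is simply a cubic tree $T$ of large pathwidth together with one apex vertex $v$ adjacent to everything; this immediately gives $\tw(G)=2$, and $K_{2,4}$-topological-minor-freeness follows because $G-v$ has maximum degree~$3$. The crux is handled by two off-the-shelf results rather than a bespoke interleaving argument: Pupyrev's theorem says that any circular drawing of a graph of large pathwidth either has a crossing graph of large chromatic number or contains a large rainbow (a non-crossing nested matching). In the first case, $\chi$-boundedness of circle graphs yields a large clique in $X_D$. In the second case, each rainbow edge $a_ib_i$ topologically separates the apex $v$ from all deeper rainbow endpoints, so the apex edges $va_j$, $vb_j$ with $j>i$ must cross $a_ib_i$; this produces a $K_{t,2t}$ subgraph of $X_D$ and hence a $K_t$-minor. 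The apex is doing exactly the job your ``auxiliary long chords'' were meant to do, but in a way that makes the crossing analysis a two-line pigeonhole argument rather than an open-ended combinatorial problem.
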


\begin{proof}
    Let $T$ be any tree with maximum degree $3$ and sufficiently large pathwidth (as a function of $t$). Such a tree exists as the complete binary tree of height $2h$ has pathwidth $h$. Let $G$ be obtained from $T$ by adding a dominant vertex $v$, so $G$ has treewidth 2. Since $G - v$ has maximum degree $3$, it follows that $G$ is $K_{2,4}$-topological minor-free.
    
    Let $D$ be a circular drawing of $G$ and let $D_T$ be the induced circular drawing of $T$. Since $T$ has sufficiently large pathwidth, a result of \citet[Thm.~2]{Pupyrev20a} implies that $X_D$ has large chromatic number or a $4t$-rainbow\footnote{The result of \citet{Pupyrev20a} is in terms of stacks and queues but is equivalent to our statement.}. Since the class of circle graphs is $\chi$-bounded~\cite{Gyarfas-DM85}, it follows that if $X_D$ has large chromatic number, then it contains a large clique and we are done. So we may assume that $D_T$ contains a $4t$-rainbow. By the pigeonhole principle, there is a subset $\set{a_1b_1,\dots,a_{2t}b_{2t}}$ of the rainbow edges such that $a_ib_i$ topologically separates $v$ from $a_j$ and $b_j$ whenever $i<j$. As such, $a_ib_i$ crosses the edges $va_j$ and $vb_j$ in $D$ whenever $i<j$. Therefore $X_D$ contains a $K_{t,2t}$ subgraph with bipartition $(\set{a_1b_1,\dots,a_tb_t},\set{va_{t + 1},vb_{t + 1},\dots,va_{2t},vb_{2t}})$ and this contains a $K_{t}$-minor. 
\end{proof}

\Cref{TopologicalMinorCounterExample} is best possible in the sense that $K_{2,4}$ cannot be replaced by $K_{2,3}$. An easy exercise shows that every biconnected $K_{2,3}$-topological minor-free graph is either outerplanar or $K_4$. It follows (by considering the block-cut tree) that every $K_{2,3}$-minor-free graph has a circular 1-planar drawing, so the crossing graph consists of isolated edges and vertices.

While $K_{2,k}$-topological minor-free and bounded treewidth is not sufficient to imply that a graph has a circular drawing whose crossing graph is $K_t$-minor-free, we now show that bounded degree and bounded treewidth is sufficient.

\begin{prop}\label{twDegree}
    For $k,\Delta\in\NN$, every graph $G$ with treewidth less than $k$ and maximum degree at most $\Delta$ has a circular drawing in which the crossing graph $X_D$ has treewidth at most $(6\Delta + 1)(18k\Delta)^2 - 1$.
\end{prop}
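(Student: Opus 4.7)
My plan is to construct an explicit circular drawing $D$ of $G$ from a tree-decomposition of $G$, and then bound $\tw(X_D)$ by first bounding the maximum degree of $X_D$ and then deriving a path-decomposition of $X_D$ from the construction. Take a tree-decomposition $(T, \WW)$ of $G$ of width at most $k-1$, so each bag $B_t$ has $|B_t|\leq k$; by subdividing edges of $T$ we may assume that $T$ has maximum degree at most $3$ (a standard refinement). Root $T$ arbitrarily and perform a depth-first search. For each $v\in V(G)$, let $r_v$ be the $T$-node closest to the root whose bag contains $v$ (equivalently, the root of the subtree of bags containing $v$). Place the vertices of $G$ on a circle in the cyclic order given by the DFS-discovery time of $r_v$, breaking ties among vertices sharing a common $r_v$ arbitrarily, to obtain $D$.

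The first technical step is to show that $X_D$ has maximum degree at most $18k\Delta$. Fix an edge $e=uv\in E(G)$. Since $u$ and $v$ share a bag, by the tree-decomposition property one of $r_u,r_v$ is an ancestor of the other in $T$; assume $r_u$ is an ancestor of $r_v$. An edge $e'=u'v'$ crosses $e$ in $D$ iff the DFS-positions of $u',v'$ interleave those of $u,v$. A case analysis of the DFS-interval structure in $T$ forces the $T$-nodes $r_{u'},r_{v'}$ of any crossing edge into a localized ancestor window of $T$ around the path from $r_u$ to $r_v$; since each bag in this window has at most $k$ vertices and each vertex has at most $\Delta$ incident edges in $G$, the count of crossing edges is at most $18k\Delta$.

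The second step is to use this max-degree bound to build a path-decomposition of $X_D$. Traversing $T$ in DFS order gives a natural linearization of $V(G)$; for each position $p$ in this linearization, let $A_p\subseteq E(G)$ be the set of edges active at $p$ (one endpoint placed on or before $p$, the other strictly after). The tree-decomposition structure bounds $|A_p|\leq (18k\Delta)^2$ and pairs of crossing edges appear together in some $A_p$, so $(A_p)_p$ is a valid path-decomposition of $X_D$. Augmenting each bag by the at most $6\Delta$ cross-neighbours each active edge can contribute at position $p$ yields bags of size at most $(6\Delta+1)(18k\Delta)^2$, giving $\tw(X_D)\leq (6\Delta+1)(18k\Delta)^2-1$. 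The main obstacle is the combinatorial analysis in the first step: verifying that the DFS-based cyclic order genuinely confines interleaving edges to a window of $T$ of count $O(k\Delta)$ independent of the depth of $T$. This will either require a careful local argument isolating the few bags that can simultaneously contain both endpoints of a crossing edge, or the use of a balanced tree-decomposition (for instance, of depth $O(\log|V(G)|)$ from a balanced separator theorem) combined with a more refined placement rule, in order to absorb the potential depth-dependence into the stated constants.
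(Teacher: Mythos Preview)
Your Step~2 contains a concrete and fatal gap. The bags $A_p$ you define---edges of $G$ with one endpoint at or before position $p$ and the other strictly after---have size equal to the cutwidth of your linear arrangement at $p$, and cutwidth is \emph{not} bounded by any function of $k$ and $\Delta$ alone. Take $G$ to be the complete binary tree of height $h$: it has treewidth~$1$ and maximum degree~$3$, yet your DFS-based arrangement (indeed any linear arrangement) has cutwidth $\Theta(h)$. So the assertion $|A_p|\le (18k\Delta)^2$ is simply false for this family, and no $(6\Delta+1)$-augmentation repairs it. (For this particular $G$ your drawing happens to have $X_D$ edgeless, so the proposition holds trivially; the point is that your \emph{argument} via $|A_p|$ collapses.) Your suggested fallback to a balanced tree-decomposition of depth $O(\log|V(G)|)$ does not help either, since the bound would then depend on $|V(G)|$ rather than only on $k,\Delta$. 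Steps~1 and~2 also sit awkwardly together: if $(A_p)_p$ really were a valid path-decomposition of $X_D$ with $|A_p|\le (18k\Delta)^2$, you would already have width $(18k\Delta)^2-1$ and would need neither the max-degree bound from Step~1 nor the augmentation. The constants $6\Delta$ and $18k\Delta$ appear in your sketch with no derivation; they look reverse-engineered from the target.

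The paper's proof avoids linearising altogether. It invokes a known embedding theorem (Distel--Wood, refining Ding--Oporowski and Wood): every graph with $\tw(G)<k$ and $\Delta(G)\le\Delta$ is a subgraph of $T\boxtimes K_m$, where $T$ is a tree of maximum degree $6\Delta$ and $m=18k\Delta$. One draws $T$ on the circle without crossings and replaces each $T$-vertex by $m$ consecutive points, obtaining a circular drawing of $T\boxtimes K_m$. The key observation is that two edges of $T\boxtimes K_m$ can cross in this drawing only if they share a $T$-endpoint. Hence, for each $v\in V(T)$, letting $W_v$ be the set of edges incident to some copy of $v$ yields a \emph{tree}-decomposition of $X_D$ indexed by $T$ itself, with each bag of size at most $\binom{m}{2}+6\Delta\cdot m^2\le (6\Delta+1)m^2$. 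Using a tree-decomposition indexed by $T$, rather than a path-decomposition from a DFS linearisation, is exactly what sidesteps the unbounded-cutwidth obstruction you ran into; and the constants $6\Delta$ and $18k\Delta$ are precisely those supplied by the cited embedding result, not something derivable from your direct construction.
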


\begin{proof}
    Refining a method from \citep{DO95,Wood09}, \citet{DW} proved that any such $G$ is isomorphic to a subgraph of $T \boxtimes K_{m}$ where $T$ is a tree with maximum degree $\Delta_T \coloneqq 6\Delta$ and $m \coloneqq 18k\Delta$. Since the treewidth of the crossing graph does not increase when deleting edges and vertices from the drawing, it suffices to show that $T \boxtimes K_{m}$ admits a circular drawing in which the crossing graph $X_D$ has treewidth at most $(\Delta_T + 1)m^2 - 1$. Without loss of generality, assume that $V(K_m) = \set{1,\dots,m}$. Take a circular drawing of $T$ such that no two edges cross (this can be done since $T$ is outerplanar). For each vertex $v \in V(T)$, replace $v$ by $((v,1),\dotsc, (v,m))$ to obtain a circular drawing $D$ of $T\boxtimes K_m$. Observe that if two edges $(u,i)(v,j)$ and $(x,a)(y,b)$ cross in $D$, then $\set{u,v} \cap \set{x,y} \neq \emptyset$. For each vertex $v \in V(T)$, let $W_v$ be the set of edges of $T \boxtimes K_{m}$ that are incident to some $(v,i)$. We claim that $(W_v \colon v\in V(T))$ is a tree-decomposition of $X_D$ with the desired width. Clearly each vertex of $X_D$ is in a bag and for each vertex $e\in V(X_D)$, the set $\set{x\in V(T)\colon e\in W_x}$ induces a graph isomorphic to either $K_2$ or $K_1$ in $T$. Moreover, by the above observation, if $e_1 e_2 \in E(X_D)$, then there exists some node $x \in V(T)$ such that $e_1, e_2\in W_x$. Finally, since there are $\binom{m}{2}$ intra-$K_m$ edges and $\Delta_T \cdot m^2$ cross-$K_m$ edges, it follows that $\abs{W_v} \leq (\Delta_T + 1) m^2$ for all $v\in V(T)$, as required.
\end{proof}

We conclude this subsection with the following open problem:
\begin{quote}
    Does there exist a function $f$ such that every $K_{2,k}$-minor-free graph $G$ has a circular drawing $D$ in which the crossing graph $X_D$ is $K_{f(k)}$-minor-free?
\end{quote}


\subsection{Circular Drawings and Degeneracy}\label{SectionDegeneracy}

\Cref{twHadwiger,twHajos} say that if a graph $G$ has a circular drawing $D$ where the crossing graph $X_D$ excludes a fixed (topological) minor, then $G$ has bounded treewidth. Graphs excluding a fixed (topological) minor have bounded average degree and degeneracy~\citep{Mader67,Mader68}. Despite this, we now show that $X_D$ having bounded degeneracy is not sufficient to bound the treewidth of $G$. In fact, it is not even sufficient to bound the Hadwidger number of $G$.

\begin{thm}\label{2Degen}
    For every $t\in\NN$, there is a graph $G_t$ and a circular drawing $D$ of $G_t$ such that:
    \begin{itemize}
        \item $G_t$ contains a $K_t$-minor,
        \item $G_t$ has maximum degree 3, and
        \item $X_D$ is 2-degenerate.
    \end{itemize}
\end{thm}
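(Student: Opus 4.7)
The plan is to construct $G_t$ and its circular drawing $D$ explicitly, by induction on $t$. The construction will produce, at each stage, a 2-degenerate elimination ordering of the chords (equivalently, an order $e_1,e_2,\dotsc,e_m$ of the edges of $G_t$ in which each $e_i$ crosses at most two of $e_1,\dotsc,e_{i-1}$), which directly certifies the 2-degeneracy of $X_D$.

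Base case: $G_1 = K_1$. Inductive step: suppose we already have a subcubic graph $G_{t-1}$ with a circular drawing $D_{t-1}$ whose crossing graph admits a 2-degenerate ordering, together with a $K_{t-1}$-minor whose branch sets occupy $t-1$ disjoint arcs $A_1,\dotsc,A_{t-1}$ in cyclic order around the circle. To build $G_t$ I would insert a new arc $A_t$ between two existing arcs (say between $A_{t-1}$ and $A_1$), place a path gadget on $A_t$ to serve as a new branch set $B_t$, and add new edges realising the $K_t$-minor. Subcubicness is preserved by, whenever necessary, extending a branch set $B_i$ by a short pendant to free up a low-degree vertex for the new incident chord.

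The crucial design decision is how to realise the new minor edges so that each new chord crosses at most two previously drawn chords. A naive approach, adding one direct chord from $B_t$ to each of the other $t-1$ branch sets, fails: a chord linking $B_t$ to a ``middle'' arc $B_j$ can pick up $\Theta(t^2)$ crossings with the chords already present between arcs on either side of $B_j$, which is incompatible with 2-degeneracy. To bypass this, I would realise the long-range minor edges indirectly, using the flexibility of minor contraction: the required edge from $B_t$ to a far branch set $B_i$ in the contracted $K_t$ would be supplied by a short path in $G_t$ that first reaches a cyclically adjacent branch set $B_j$ via one local chord, and then continues through the (previously extended) structure of $B_j$ into $B_i$. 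Thus every new chord we actually draw connects $B_t$ only to an arc lying within a bounded cyclic neighbourhood of $A_t$, so each such chord crosses only the few chords already present in that local region; a direct geometric case analysis bounds this by two, and the new chords, appended to the existing 2-degenerate ordering, extend it to one for $X_{D_t}$.

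The main obstacle is the global bookkeeping required to keep the induction going. One must verify that the branch sets, as they are repeatedly extended into ``routers'' across inductive steps, remain connected and subcubic, and that enough fresh vertices remain available at each arc's boundary to host the next round of connections. A concrete strengthened invariant---for instance, that each arc $A_i$ ends (at both cyclic extremities) in a short path of fresh degree-$\leq 2$ vertices reserved for future inductive use---should suffice, yielding a graph $G_t$ with $|V(G_t)| = \Theta(t^2)$, which matches the known lower bound on subcubic graphs containing a $K_t$-minor.
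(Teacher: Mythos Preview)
Your proposal has a genuine gap at the central step. You write that the minor edge from $B_t$ to a far branch set $B_i$ is ``supplied by a short path in $G_t$ that first reaches a cyclically adjacent branch set $B_j$ via one local chord, and then continues through the (previously extended) structure of $B_j$ into $B_i$.'' But a path from $B_t$ to $B_i$ whose internal vertices lie in $B_j$ does \emph{not} yield an edge between $B_t$ and $B_i$ after contracting the branch sets: it collapses to the length-$2$ walk $B_t\,B_j\,B_i$, and the edge $B_tB_i$ is simply absent from the contracted graph. A $K_t$-minor model requires, for every pair $i\neq j$, an actual edge of $G_t$ with one endpoint in $B_i$ and the other in $B_j$; routing through a third branch set cannot substitute for this.

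This is not merely missing bookkeeping; it reflects a real obstruction to the disjoint-arcs layout itself. If the $t$ branch sets lie on $t$ pairwise disjoint arcs, then for each pair $i,j$ there must be a chord with one end in $A_i$ and the other in $A_j$, and two such chords cross whenever the index pairs $\{i,j\}$ and $\{k,\ell\}$ interleave cyclically---independently of where within the arcs the endpoints sit. These $\binom{t}{2}$ chords therefore produce $\binom{t}{4}$ crossings, giving a subgraph of $X_D$ of average degree $\Theta(t^2)$, which cannot be $2$-degenerate. The paper avoids this by abandoning disjoint arcs altogether: the branch sets are monotone paths $P_0,\dots,P_{t-1}$ whose vertex sets are \emph{interleaved} on the line, with the vertices of $P_s$ at spacing $2^{1-s}$. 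This dyadic spacing guarantees that between any two consecutive vertices of $P_s$ lies exactly one vertex of $P_0\cup\cdots\cup P_{s-1}$, so each edge of $P_s$ crosses at most two earlier path edges; the edges joining distinct branch sets are then chosen so that no vertex lies strictly between their endpoints, and hence cross nothing.
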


\begin{proof}
    We draw $G_t$ with vertices placed on the x-axis (x-coordinate between 1 and $t$) and edges drawn on or above the x-axis. This can then be wrapped to give a circular drawing of $G_t$.
    
    For real numbers $a_1 < a_2 < \dotsb < a_n$, we say a path $P$ is drawn as a \defn{monotone path} with vertices $a_1, \dotsc, a_n$ if it is drawn as follows where each vertex has x-coordinate equal to its label:
    \begin{figure}[ht]
        \centering
        \includegraphics{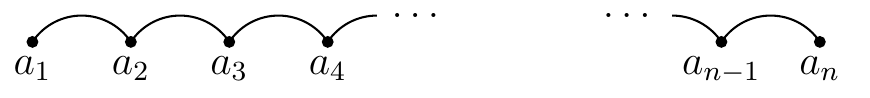}
    \end{figure}
    
    In all our monotone paths, $a_1, a_2, \dotsc, a_n$ will be an arithmetic progression. We construct our drawing of $G_t$ as follows (see \cref{PathByPath} for the construction with $t = 4$). 
    
    \begin{figure}[ht]
    \centering
    \begin{subfigure}{.9\textwidth}
        \centering
        \includegraphics{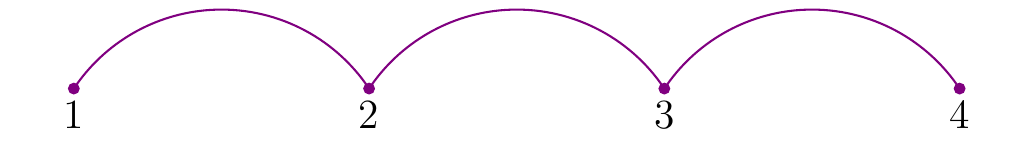}
    \end{subfigure}
    
    \bigskip
    
    \begin{subfigure}{.9\textwidth}
        \centering
        \includegraphics{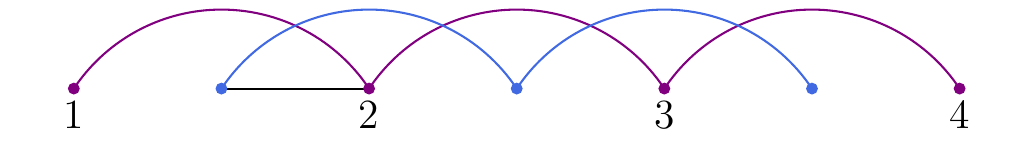}
    \end{subfigure}
    
    \bigskip
    
    \begin{subfigure}{.9\textwidth}
        \centering
        \includegraphics{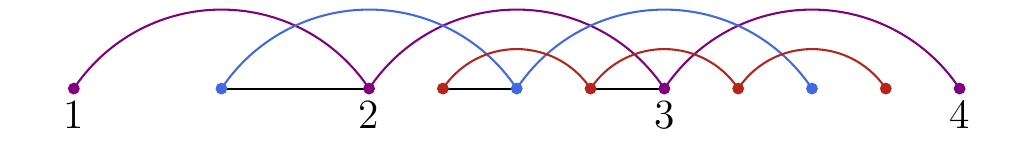}
    \end{subfigure}
    
    \bigskip
    
    \begin{subfigure}{.9\textwidth}
        \centering
        \includegraphics{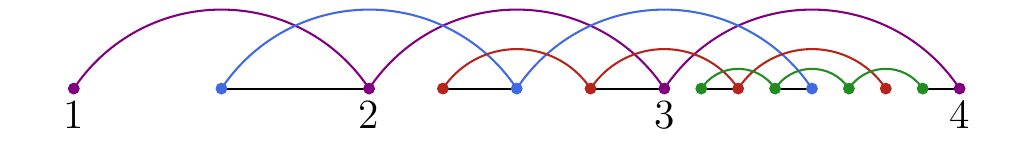}
    \end{subfigure}
    \caption{$G_4$ built up path-by-path, where  $P_0$ is purple, $P_1$ is blue, $P_2$ is red, $P_3$ is green, and the $e_{r, s}$ are black.}
    \label{PathByPath}
\end{figure}

First let $P_0$ be the monotone path with vertices $1, 2, \dotsc, t$. For $s \in \set{1,2,\dots,t-1}$, let $P_s$ be the monotone path with vertices
    \begin{equation*}
        s + 2^{-s}, s + 3 \cdot 2^{-s}, s + 5 \cdot 2^{-s}, \dotsc, t - 2^{-s}.
    \end{equation*}
Observe that these paths are vertex-disjoint. For $0 \leq  r < s \leq t - 1$, let $I_{r,s}$ be the interval 
    \begin{equation*}
        [s + 2^{-r} - 2^{-s}, s + 2^{-r}].
    \end{equation*}
    Note that the lower end-point of $I_{r, s}$ is a vertex in $P_s$ and the upper end-point is a vertex in $P_r$. Also note that no vertex of any $P_i$ lies in the interior of $I_{r, s}$. Indeed, for $i > s$, the vertices of $P_i$ have value at least $s + 2^{-r}$ and for $i \leq s$, the denominator of the vertices of $P_i$ precludes them from being in the interior. Hence for all $r < s$ we may draw a horizontal edge $e_{r, s}$ between the end-points of $I_{r, s}$.
    
    Graph $G_t$ and the drawing $D$ are obtained as a union of the $P_s$ together with all the $e_{r, s}$. The paths $P_s$ are vertex-disjoint and edge $e_{r, s}$ joins $P_r$ to $P_s$, so $G_t$ contains a $K_t$-minor. We now show that the $I_{r, s}$ are pairwise disjoint. Note that $I_{r, s} \subset (s, s + 1]$ so two $I$ with different $s$ values are disjoint. Next note that $I_{r, s} \subset (s + 2^{-(r + 1)}, s + 2^{-r}]$ for $r \leq s - 2$ while $I_{s - 1, s} = [s + 2^{-s}, s + 2^{-(s - 1)}]$ and so two $I$ with the same $s$ but different $r$ values are disjoint. In particular, any vertex $v$ is the end-point of at most one $e_{r, s}$ and so has degree at most three. Hence, $G_t$ has maximum degree three.

    Each edge $e_{r, s}$ is horizontal and crosses no other edges so has no neighbours in $X_D$. Next consider an edge $aa'$ of $P_s$. We have $a' = a + 2 \cdot 2^{-s}$. Exactly one vertex in $V(P_0) \cup V(P_1) \cup \dotsb \cup V(P_{s})$ lies between $a$ and $a'$: their midpoint, $m = a + 2^{-s}$. Vertex $m$ has at most two non-horizontal edges incident to it and so, in $X_D$, every $aa' \in E(P_s)$ has at most two neighbours in $E(P_0) \cup E(P_1) \cup \dotsb \cup E(P_{s})$. Thus $X_D$ is 2-degenerate, as required.
\end{proof}

\subsection{Applications to General Drawings}\label{SectionGlobalDrawings}

This section studies the global structure of graphs admitting a general (not necessarily circular) drawing. In particular, consider the following question: if a graph $G$ has a drawing $D$, then what graph-theoretic assumptions about $X_D$ guarantee that $G$ is well-structured? Even 1-planar graphs contain arbitrarily large complete graph minors~\citep{DEW17}, so one cannot expect $G$ to exclude a fixed minor. 

The following definition works well in this setting. \citet{Eppstein-Algo00} defined a graph class $\GG$ to have
the \defn{treewidth-diameter property}, more recently called \defn{bounded local treewidth}, if there is a function $f$ such that for every graph $G \in \GG$, for every vertex $v\in V(G)$ and for every integer $r\geq 0$, the subgraph of $G$ induced by the vertices at distance at most $r$ from $v$ has treewidth at most $f(r)$. If $f$ is linear (polynomial), then $\GG$ has \defn{linear \textnormal{(}polynomial\textnormal{)} local treewidth}.

\Cref{PlanarRadius} shows that planar graphs have linear local treewidth. More generally, \citet{DMW17} showed that $k$-planar graphs have linear local treewidth (in fact, $k$-planar graphs satisfy a stronger product structure theorem~\citep{DMW}). On the other hand, \citet{HW21b} showed that $1$-gap planar graphs do not have polynomial local treewidth. They also asked whether $k$-gap planar graphs have bounded local treewidth. We show that this is false in a stronger sense. 

\Citet{DGK-LICS} defined a graph class $\GG$ to \defn{locally exclude a minor} if for each $r\in\NN$ there is a graph $H_r$ such that for every graph $G\in\GG$ every subgraph of $G$ with radius at most $r$ contains no $H_r$-minor. Observe that if $\GG$ has bounded local treewidth, then $\GG$ locally excludes a minor. 

By \cref{2Degen}, for each $t\in\NN$, there is a graph $G_t$ that  contains a $K_t$-minor and has a circular drawing $D$ such that $X_D$ is 2-degenerate. Let $G'_t$ be the graph obtained from $G_t$ by adding a dominant vertex into the outer-face of $D$. So the graph $G'_t$ is a $2$-degenerate crossing, has radius $1$, and contains a $K_t$-minor. Thus,  graphs that are $2$-degenerate crossing do not locally exclude a minor, implying they do not have bounded local treewidth, thus answering the above question of \citet{HW21b}. Since every graph that is 2-degenerate crossing is 2-gap-planar, we conclude that 2-gap-planar graphs also do not locally exclude a minor (and do not have bounded local treewidth). This result highlights a substantial difference between $k$-planar graphs and $k$-gap-planar graphs (even for $k=2$). We now prove the following stronger result.

\begin{prop}
\label{StarForestPlanar}
For every $t\in\NN$ there is a graph $G$ and a drawing $D$ of $G$ such that:
\begin{itemize}
    \item $G$ has radius 1,
    \item $G$ contains a $K_{t+1}$-minor, and
    \item $X_D$ is a star-forest.
\end{itemize}    
Thus the graph $G$ is a 1-degenerate crossing and 1-gap-planar.
\end{prop}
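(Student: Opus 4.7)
The plan is to refine the construction of Theorem~\ref{2Degen} so that the drawing has a star-forest crossing graph (indeed a matching), and then to add a dominant vertex in the outer face exactly as in the paragraph preceding this proposition. This will simultaneously strengthen the 2-degeneracy of Theorem~\ref{2Degen} to ``star-forest'' and, via the added dominant vertex, boost the $K_t$-minor to a $K_{t+1}$-minor.

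First I would modify the drawing of $G_t$. In Theorem~\ref{2Degen}'s drawing each path-edge $aa' \in E(P_s)$ is a semicircle above the $x$-axis whose only crossings with other path-edges lie at its midpoint $m$, namely with the (at most two) path-edges incident to $m$. I would reroute some path-edges as semicircles \emph{below} the $x$-axis so that, at every vertex $m$, exactly one of $m$'s two incident path-edges lies above the axis and the other lies below. Then any arc drawn above that passes over $m$ crosses at most one of $m$'s two incident path-edges, and likewise for arcs below. Carried out consistently across the levels $s = 0, 1, \dotsc, t-1$, and supplemented by a bounded number of edge subdivisions to resolve the crossings arising from edges of higher-indexed paths whose midpoints lie inside the interval of a lower-level arc, this produces a drawing $D'$ of a subdivision $H_t$ of $G_t$ whose crossing graph $X_{D'}$ is a matching.

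Next I would construct $G$ by adjoining a new vertex $v$ adjacent to every vertex of $H_t$, placing $v$ in a face of the planarisation $P_{D'}$ that is incident to every vertex of $H_t$, and routing each $vu$-edge inside that face so that it crosses no edge of $D'$. The resulting drawing $D$ of $G$ satisfies $X_D = X_{D'}$, which is a matching and hence a star-forest. The three bulleted properties then follow: $v$ is dominant so $G$ has radius $1$; contracting the $t$ branch sets witnessing the $K_t$-minor of $H_t$ (inherited from $G_t$) together with $\{v\}$ yields a $K_{t+1}$-minor; and $X_D$ is a star-forest by construction. The ``Thus'' clause is immediate, since every star-forest is $1$-degenerate (yielding $1$-degenerate crossing) and every $1$-degenerate crossing graph is $1$-gap-planar.

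The main obstacle is the first step. The naive above/below alternation at midpoints handles only the two ``nearest'' crossings of each edge; in Theorem~\ref{2Degen}'s drawing an edge of $P_s$ can additionally be crossed by arcs of higher-indexed paths $P_{s'}$ with $s' > s$ whose midpoints lie inside its interval. Controlling all such crossings simultaneously requires an inductive bookkeeping across the levels, together with a judicious placement of subdivision vertices at specific crossing points. Executing this coherently, and also verifying that the resulting rerouted drawing still admits a face of the planarisation incident to every vertex of $H_t$ (so that Step~2 goes through), is where the main technical work of the proof lies.
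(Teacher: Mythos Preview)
Your proposal has two genuine gaps, and the paper's proof takes an entirely different route that sidesteps both.

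\textbf{Gap 1: Step 1 is a hope, not a proof.} You yourself write that ``executing this coherently \ldots\ is where the main technical work of the proof lies,'' and then stop. The above/below alternation at a midpoint $m$ deals only with the two edges incident to $m$, but an arc $aa' \in E(P_s)$ in the construction of Theorem~\ref{2Degen} can also be crossed by edges of $P_{s'}$ for every $s' > s$ whose intervals interlock with $[a,a']$, and the number of such crossings is unbounded in $t$. Your remedy (``a bounded number of edge subdivisions'') is not specified: where the subdivision vertices go, what invariant is maintained level by level, and why the resulting crossing graph is a matching are all left open. As written, there is no construction to check.

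\textbf{Gap 2: Step 2 likely fails after Step 1.} The reason the dominant-vertex trick works for the drawing of Theorem~\ref{2Degen} is that it wraps to a circular drawing, so every vertex lies on the outer face. Once you send arcs to both sides of the $x$-axis, a vertex $m$ with an arc passing over it from above and another passing under it from below is enclosed and lies on no face that also meets the extreme vertices. Hence there need not be any face of $P_{D'}$ incident to every vertex of $H_t$, and your crossing-free routing of the edges $vu$ is unavailable. You flag this as something to verify, but it is not merely a verification: it appears to be false in general for the rerouted drawing you describe.

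\textbf{What the paper does instead.} The paper abandons the nested-arcs picture altogether and builds a grid-like drawing in the plane. For each $ij \in E(K_t)$ it draws a \emph{vertical} segment in column $\phi(ij)$ between rows $i$ and $j$; along each row $i$ it draws a \emph{horizontal} path $P_i$. Every crossing is then horizontal--vertical. One subdivides each horizontal edge between consecutive vertical crossings, so that every horizontal edge is crossed at most once; thus $X_D$ is a star-forest with vertical edges as centres. Radius $1$ is obtained not by finding a universal face, but by running an extra vertical edge from every vertex up to a new, uncrossed horizontal path $P_{t+1}$ at height $t+1$ and then contracting $P_{t+1}$ to a single vertex. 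Contracting each $P_i$ gives the $K_{t+1}$-minor. This avoids both of your obstacles: the star-forest structure is immediate from the horizontal/vertical dichotomy, and no face incident to all vertices is ever needed.
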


\begin{proof}
Let $\phi \colon E(K_t) \to \set{1,\dots,\binom{t}{2}}$ be a bijection. As illustrated in \cref{HorizontalVerticalConstruction}, for each $ij\in E(K_t)$, draw vertices at $(\phi(ij),i), (\phi(ij),j)\in \RR^2$ together with a straight vertical edge between them (red edges in \cref{HorizontalVerticalConstruction}). 

For each $i \in \set{1,2,\dots,t}$, draw a straight horizontal edge between each pair of consecutive vertices along the $y=i$ line. Let $G_0$ be the graph obtained. Let $P_i$ be the subgraph of $G_0$ induced by the vertices on the $y=i$ line. Then $P_i$ is a path on $t-1$ vertices (green edges in \cref{HorizontalVerticalConstruction}). 

For each vertex $v$ in $P_1\cup\dots\cup P_t$ add a `vertical' edge from $v$ to a new vertex $v'$ drawn with y-coordinate $t+1$ (brown edges in \cref{HorizontalVerticalConstruction}). 

    \begin{figure}[!ht]
         \centering
         \includegraphics{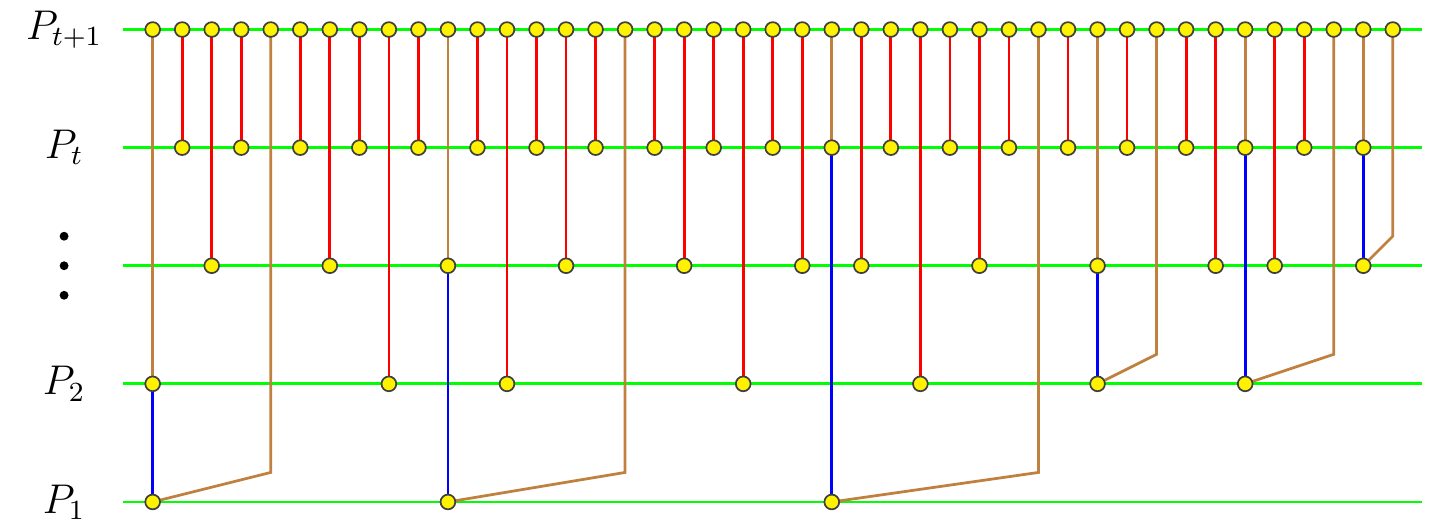}
         \caption{The graph $G_1$ in the proof of \cref{StarForestPlanar}.}
         \label{HorizontalVerticalConstruction} 
    \end{figure}

For $i=1,2,\dots,t$ complete the following step. If two vertical edges $e$ and $f$ cross an edge $g$ in $P_i$ at points $x$ and $y$ respectively, and no other vertical edge crosses $g$ between $x$ and $y$, then subdivide $g$ between $x$ and $y$, introducing a new vertex $v$, and add a new vertical edge from $v$ to a new vertex $v'$ with y-coordinate $t+1$ (red edges in \cref{HorizontalVerticalConstruction}). 

Finally, add a path $P_{t+1}$ through all the vertices with y-coordinate $t+1$. We obtain a graph $G_1$ and a  drawing $D_1$ of $G_1$. Each crossing in $D_1$ is between a vertical and a horizontal edge, and each horizontal edge is crossed by at most one edge. Thus $X_{D_1}$ is a star-forest. 

By construction, no edge in $P_{t+1}$ is crossed in $D_1$, and every vertex has a neighbour in $P_{t+1}$. Thus contracting $P_{t+1}$ to a single vertex gives a graph $G$ with radius 1 and a drawing $D$ of $G$ in which $X_{D}\cong X_{D_1}$. Thus the graph $G$ is 1-degenerate crossing and 1-gap-planar. Finally, $G$ contains a $K_{t+1}$-minor, obtained by contracting each horizontal path $P_i$ into a single vertex.   
\end{proof}

\section{Structural Properties of Circle Graphs}\label{SectionCircleStructure}

Recall that a circle graph is the intersection graph of a set of chords of a circle. More formally, let $C$ be a circle in $\RR^2$. A \defn{chord} of $C$ is a closed line segment with distinct endpoints on $C$. Two chords of $C$ either cross, are disjoint, or have a common endpoint. Let $S$ be a set of chords of a circle $C$ such that no three chords in $S$ cross at a single point. Let $G$ be the crossing graph of $S$. Then $G$ is called a \defn{circle graph}. Note that a graph $G$ is a circle graph if and only if $G \cong X_D$ for some circular drawing $D$ of a graph $H$, and in fact one can take $H$ to be a matching.

We are now ready to prove \cref{circlelargetw,Tied}. While the treewidth of circle graphs has previously been studied from an algorthmic perspective~\cite{Kloks96}, to the best of our knowledge, these theorems are the first structural result on the treewidth of circle graphs.

\circlelargetw*

\begin{proof}
    Let $D$ be a circular drawing of a graph such that $G \cong X_D$. Let $M_D$ be the map graph of $D$. Since $\tw(X_D)=\tw(G)\geq 12t+2$, it follows by \cref{TreewidthMapUB} that $M_D$ has radius at least $2t$. The claim then follows from \cref{Cycles}.
\end{proof}

\Tied*

\begin{proof}
     Let $G$ be a circle graph and let $D$ be a circular drawing with $G \cong X_D$. By \cref{CrossingGraphParams},
    \begin{equation*}
        \tw(G) \leq 6 \rad(M_D) + 7 \leq 12 \, h(G) - 11 \leq 12 \tw(G) + 1.
    \end{equation*}
    So the Hadwiger number and treewidth are linearly tied for circle graphs. This inequality and \cref{RadiusHajos} imply 
    \begin{equation*}
        \hajos(G) - 1 \leq h(G) - 1 \leq \tw(G) \leq 6\rad(M_D) + 7 \leq 6\hajos(G)^2 + 18\, \hajos(G) + 13.
    \end{equation*}
    Hence the Haj\'{o}s number is quadratically tied to both the treewidth and Hadwiger number for circle graphs. Finally, $K_{t, t}$ is a circle graph which has treewidth $t$, Hadwiger number $t + 1$, and Haj\'{o}s number $\Theta(\sqrt{t})$. Hence, `quadratic' is best possible.
\end{proof}

We now discuss several noteworthy consequences of \cref{Tied,circlelargetw}. Recently, there has been significant interest in understanding the unavoidable induced subgraphs of graphs with large treewidth \citep{LR22,ACV22,ACHS22a,ACHS22b,AACHSV22,ACDHRSV21,ACHS22,PSTT21,ST21,AACHS22}. Obvious candidates of unavoidable induced subgraphs include complete graphs, complete bipartite graphs, subdivision of large walls, and line graphs of subdivision of large walls. We say that a hereditary class of graphs $\GG$ is \defn{induced-$\tw$-bounded} if there is a function $f$ such that for every graph $G\in \GG$ with $\tw(G)\geq f(t)$, $G$ contains $K_t$, $K_{t,t}$, a subdivision of the $(t \times t)$-wall, or a line graph of a subdivision of the $(t \times t)$-wall as an induced subgraph\footnote{This definition is motivated by analogy to $\chi$-boundedness; see \citep{SS20}.
Note that while the language of `induced $\tw$-bounded' is original to this paper, \citet{ACHS22b} has previously used this definition under the guise of `special' and \citet{AACHS22} has used it under the guise of `clean'.}. While the class of all graphs is not induced-$\tw$-bounded \cite{AACHSV22,ST21,BBDEGHTW,Pohoata14,Davies22b}, many natural graph classes are. For example, \Citet{AAKST21} showed that every proper minor-closed class is induced-$\tw$-bounded and \citet{Kor22} recently showed that the class of graphs with bounded maximum degree is induced-$\tw$-bounded. We now show that the class of circle graphs is not induced-$\tw$-bounded.

\begin{thm}\label{CircleGraphstwBounded}
    The class of circle graphs is not induced-$\tw$-bounded.
\end{thm}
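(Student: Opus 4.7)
The plan is to disprove induced-$\tw$-boundedness by exhibiting, for each integer $n$, a circle graph $G_n$ with $\tw(G_n) \geq n$ that avoids $K_{t_0}$, $K_{t_0, t_0}$, subdivisions of the $(t_0 \times t_0)$-wall, and line graphs of such subdivisions as induced subgraphs, for some fixed $t_0$ independent of $n$. The two wall-related ``usual suspects'' come for free: it is already noted in the introduction that no circle graph contains a subdivision of a sufficiently large wall or the line graph of such a subdivision, even as a (non-induced) subgraph. So the substantive task reduces to constructing circle graphs of arbitrarily large treewidth with simultaneously bounded induced clique number and bounded induced biclique number.

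The construction would use a chord diagram with $n$ nested ``cycle-layers,'' inspired by the unavoidable structure isolated in \cref{circlelargetw}. Each layer consists of a long sequence of chords whose consecutive members cross, so that in the crossing graph each layer becomes a long induced cycle $C_i$ of length at least five (in particular triangle-free and $C_4$-free). The layers are then arranged so that (a) for every pair $i \neq j$ there is at least one crossing between a chord of layer $i$ and a chord of layer $j$, and (b) the crossing pattern between any two (or three) layers is sparse, with no three chords pairwise crossing and no six chords forming the adjacency pattern of an induced $K_{3,3}$. Condition (a) ensures that contracting each $C_i$ to a single vertex yields a $K_n$-minor in $G_n$, so that $\tw(G_n) \geq h(G_n) - 1 \geq n - 1$. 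Condition (b), combined with the induced-cycle structure within each layer, rules out the remaining usual suspects.

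The main obstacle is the tension between achieving large treewidth (which typically wants dense crossings) and avoiding $K_{t_0}$ and $K_{t_0, t_0}$ as induced subgraphs (which wants sparse, highly structured crossings). The nested-layer design threads this needle by placing the entire ``global'' complexity --- the $K_n$-minor that witnesses large treewidth --- into the inter-layer crossings, while keeping every local pairwise interaction sparse enough that no small complete or complete bipartite induced subgraph appears. The technical heart of the proof is producing an explicit chord diagram that realises both conditions simultaneously, and performing the finite case analysis (over how the vertices of a candidate induced $K_{t_0}$ or $K_{t_0, t_0}$ distribute among the layers) to certify the required avoidance.
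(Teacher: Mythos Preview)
Your outline is plausible but stops short of a proof: you never produce the chord diagram, and you defer the verification of conditions (a) and (b) to an unspecified ``finite case analysis.'' That is a genuine gap. The tension you yourself identify between (a) (every pair of layers joined by a crossing, to force a $K_n$-minor) and (b) (no three pairwise-crossing chords, no induced $K_{3,3}$) is exactly the technical content of the result, and without an explicit diagram there is nothing to check. A secondary issue: your appeal to the introduction for the wall and line-graph-of-wall exclusions is circular --- the introduction only \emph{asserts} that fact, and its justification (via \cref{CrossingGraphParams}, which shows that $K_5$-minor-free circle graphs have treewidth at most $49$, whereas large walls and their line graphs are planar with large treewidth) is given precisely inside the paper's proof of \cref{CircleGraphstwBounded}.

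The paper avoids your construction problem altogether by reusing two earlier results. It invokes \cref{2Degen}, which already supplies, for each $t$, a graph $G_t$ containing a $K_t$-minor together with a circular drawing whose crossing graph $X_t$ is $2$-degenerate. Then \cref{twHadwiger} forces $\tw(X_t)\to\infty$ as $t\to\infty$, while $2$-degeneracy of $X_t$ immediately excludes $K_4$ and $K_{3,3}$ even as (non-induced) subgraphs, since both have minimum degree $3$. So the paper's proof is essentially two citations once \cref{2Degen} and \cref{twHadwiger} are in hand; your route, if completed, would in effect re-engineer a variant of the \cref{2Degen} construction by hand, with the bookkeeping replacing the single word ``$2$-degenerate.''
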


\begin{proof}
    We first show that for all $t \geq 50$, no circle graph contains a subdivision of the $(t \times t)$-wall or a line graph of a subdivision of the $(t \times t)$-wall as an induced subgraph. As the class of circle graphs is hereditary, it suffices to show that for all $t \geq 50$, these two graphs are not circle graphs. These two graphs are planar (so $K_5$-minor-free) and have treewidth $t \geq 50$. However, \cref{CrossingGraphParams} implies that every $K_5$-minor-free circle graph has treewidth at most 49, which is the required contradiction.
    
    Now consider the family of couples of graphs $((G_t,X_t)\colon t\in \NN)$ given by \cref{2Degen} where $X_t$ is the crossing graph of the drawing of $G_t$. Then $(X_t \colon t\in \NN)$ is a family of circle graphs. Since $(G_t \colon t \in \NN)$ has unbounded treewidth, \cref{twHadwiger} implies that $(X_t \colon t \in \NN)$ also has unbounded treewidth. Moreover, since $X_t$ is $2$-degenerate for all $t\in \NN$, it excludes $K_4$ and $K_{3,3}$ as (induced) subgraphs, as required.
\end{proof}

While the class of circle graphs is not induced-$\tw$-bounded, \cref{circlelargetw} describes the unavoidable induced subgraphs of circle graphs with large treewidth. To the best of our knowledge, this is the first theorem to describe the unavoidable induced subgraphs of a natural hereditary graph class that is not induced-$\tw$-bounded.  In fact, it does so with a linear lower bound on the treewidth of the unavoidable induced subgraphs.

\cref{circlelargetw} can also be used to describe the unavoidable induced subgraphs of circle graphs with large pathwidth.

\begin{thm}\label{CircleGraphLargePW}
    There exists a function $f$ such that every circle graph $G$ with $\pw(G)\geq f(t)$ contains:
    \begin{itemize}
        \item a subdivision of a complete binary tree with height $t$ as an induced subgraph, or
        \item the line graph of a subdivision of a complete binary tree with height $t$ as an induced subgraph, or
        \item an induced subgraph $H$ that consists of\/ $t$ vertex-disjoint cycles $(C_1, \dotsc, C_t)$ such that for all $i<j$ every vertex of $C_i$ has at least two neighbours in $C_j$. Moreover, every vertex of $G$ has at most four neighbours in any $C_i$ $(1\le i\le t)$.
    \end{itemize}
\end{thm}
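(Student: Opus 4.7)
The plan is a clean dichotomy on $\tw(G)$. I would first set the treewidth threshold at $12t+2$. If $\tw(G) \geq 12t+2$, then \cref{circlelargetw} directly yields the induced subgraph described in the third bullet, namely $t$ vertex-disjoint cycles $C_1, \dotsc, C_t$ with the stated adjacency property and the degree-at-most-four condition. In this case we are immediately done.

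Otherwise $\tw(G) \leq 12t+1$, and I would reduce to the regime of graphs with bounded treewidth but arbitrarily large pathwidth. Here I would appeal to a black-box induced-subgraph analogue of the Robertson--Seymour excluded-pathwidth theorem: there exists a function $g$ such that every graph $G'$ satisfying $\tw(G') \leq k$ and $\pw(G') \geq g(k,t)$ contains, as an induced subgraph, either a subdivision of the complete binary tree of height $t$ or the line graph of such a subdivision. Setting $f(t) \coloneqq g(12t+1,\,t)$ and using the hypothesis $\pw(G) \geq f(t)$, this produces one of the first two outcomes of the theorem.

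The main obstacle is the availability and precise form of the induced dichotomy for bounded-treewidth, large-pathwidth graphs; that is the substantial ingredient and the reason the proof is not elementary. Once it is in hand, the rest is routine: no further structural facts about circle graphs are required beyond \cref{circlelargetw}, and the two-case split on $\tw(G)$ with the threshold tuned to \cref{circlelargetw} handles everything. Note that the linear dependence of the treewidth threshold on $t$ in \cref{circlelargetw} is harmless, because $g$ absorbs it into the definition of $f$.
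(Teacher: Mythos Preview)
Your proposal is correct and matches the paper's proof essentially line for line: the same dichotomy on $\tw(G)$ at threshold $12t+2$, the same appeal to \cref{circlelargetw} in the large-treewidth case, and the same black-box result in the bounded-treewidth case. The paper attributes that black-box (the induced-subgraph analogue of the excluded-pathwidth theorem) to Hickingbotham, and defines $f(t)=\max\{g(12t+2,t),\,12t+2\}$, but otherwise the argument is identical to yours.
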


\begin{proof}
    If $\tw(G)\geq 12t + 2$, then the claim follows from \cref{circlelargetw}. Now assume $\tw(G)<12t + 2$. \citet{Hickingbotham22} showed that there is a function $g(k,t)$ such that every graph with treewidth less than $k$ and pathwidth at least $g(k,t)$ contains a subdivision of a complete binary tree with height $t$ as an induced subgraph or the line graph of a subdivision of a complete binary tree with height $t$ as an induced subgraph. The result follows with $f(t) \coloneqq \max\set{g(12t + 2,t),12t + 2}$.
\end{proof}

We now discuss applications of \cref{circlelargetw} to vertex-minor-closed classes. For a vertex $v$ of a graph $G$, to \defn{locally complement at $v$} means to replace the induced subgraph on the neighbourhood of $v$ by its complement. A graph $H$ is a \defn{vertex-minor} of a graph $G$ if $H$ can be obtained from $G$ by a sequence of vertex deletions and local complementations. Vertex-minors were first studied by \citet{Bouchet87,Bouchet88} under the guise of isotropic systems. The name `vertex-minor' is due to \citet{Oum05}. Circle graphs are a key example of a vertex-minor-closed class.

We now show that a vertex-minor-closed graph class is induced-$\tw$-bounded if and only if it has bounded rank-width. Rank-width is a graph parameter introduced by \citet{OS06} that describes whether a graph can be decomposed into a tree-like structure by simple cuts. For a formal definition and surveys on this parameter, see \citep{Oum17,HOSG08}. \citet{Oum05} showed that rank-width is closed under vertex-minors.

\begin{thm}\label{boundedrankwidth}
    A vertex-minor-closed class $\GG$ is induced-$\tw$-bounded if and only if it has bounded rankwidth.
\end{thm}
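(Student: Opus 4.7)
The plan is to prove each direction separately, using results stated earlier in the paper together with two classical tools from the literature.

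For the implication induced-$\tw$-bounded $\Rightarrow$ bounded rankwidth, I would argue by contrapositive. Suppose $\GG$ is vertex-minor-closed with unbounded rankwidth. By the Geelen--Kwon--McCarty--Wollan excluded-vertex-minor theorem quoted in the introduction, $\GG$ contains every circle graph as a vertex-minor of some member, and since $\GG$ is vertex-minor-closed, every circle graph therefore lies in $\GG$. But \cref{CircleGraphstwBounded} already exhibits a family of circle graphs of unbounded treewidth (the $2$-degenerate circle graphs arising from \cref{2Degen}) that contain none of the usual suspects as induced subgraphs. This family lies in $\GG$ and directly witnesses the failure of induced-$\tw$-boundedness.

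For the converse, bounded rankwidth $\Rightarrow$ induced-$\tw$-bounded, I would combine two external ingredients: the Oum--Seymour theorem that rankwidth and cliquewidth are functionally equivalent, and the Gurski--Wanke theorem that every graph of bounded cliquewidth which excludes $K_{n,n}$ as a subgraph has treewidth bounded in terms of $n$ and the cliquewidth. Fix $t$, set $N \coloneqq R(t,t)$, and let $k$ bound the rankwidth of $\GG$. If $G \in \GG$ has treewidth exceeding the resulting bound (which depends only on $k$ and $N$), then $G$ must contain $K_{N,N}$ as a subgraph with bipartition $(A,B)$. A Ramsey argument on the induced subgraph on $A$ yields either an induced $K_t$, in which case we are done, or an independent set $A' \subseteq A$ of size $t$; the symmetric dichotomy applied to $B$ gives either an induced $K_t$ or an independent set $B' \subseteq B$ of size $t$. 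In the remaining case, $(A',B')$ spans an induced $K_{t,t}$, completing the proof.

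The proof requires essentially no new combinatorial work beyond invoking these existing tools together with the circle-graph counterexamples of \cref{CircleGraphstwBounded}; the main obstacle is chasing the correct forms of the Oum--Seymour equivalence and the Gurski--Wanke bound and translating between the rankwidth and cliquewidth settings. As a byproduct, the converse direction yields a slightly stronger conclusion: only $K_t$ and $K_{t,t}$ ever appear as witnesses, since subdivisions of large walls and their line graphs have unbounded rankwidth and hence cannot lie in $\GG$ at all.
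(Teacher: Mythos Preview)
Your argument is correct in both directions. The contrapositive direction (unbounded rankwidth $\Rightarrow$ not induced-$\tw$-bounded) is identical to the paper's: both invoke \citet{GKMW23} to place all circle graphs inside $\GG$ and then appeal to \cref{CircleGraphstwBounded}.

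For the converse, the paper takes a shorter path than you do: it quotes a single result of \citet*{ACHS22b} stating directly that in any class of bounded rankwidth, large treewidth forces an induced $K_t$ or $K_{t,t}$. Your route instead rebuilds this from more primitive components (Oum--Seymour to pass to cliquewidth, Gurski--Wanke to extract a $K_{N,N}$ subgraph, then Ramsey on each side). This is a genuine alternative and has the virtue of being self-contained modulo very classical tools, whereas the paper's version relies on a heavier black box; on the other hand, the paper's citation gives the result in one line. Your closing observation that only $K_t$ and $K_{t,t}$ are ever needed as witnesses is exactly what the cited \citet*{ACHS22b} result says, so the two approaches arrive at the same strengthened conclusion.
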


\begin{proof}
   Suppose $\GG$ has bounded rankwidth. By a result of \citet*{ACHS22b}, there is a function $f$ such that every graph in $\GG$ with treewidth at least $f(t)$ contains $K_t$ or $K_{t,t}$ as an induced subgraph. Thus $\GG$ is induced-$\tw$-bounded. Now suppose $\GG$ has unbounded rank-width. By a result of \citet*{GKMW23}, $\GG$ contains all circle graphs. It therefore follows by \cref{CircleGraphstwBounded} that $\GG$ is not induced-$\tw$-bounded.
\end{proof}

We conclude with the following question: 
\begin{quote}
    Let $\GG$ be a vertex-minor-closed class with unbounded rank-width. What are the unavoidable induced subgraphs of graphs in $\GG$ with large treewidth? 
\end{quote}
The cycle structure (or variants thereof) in \cref{circlelargetw} must be included in the list of unavoidable induced subgraphs. The case when $\GG$ excludes a large wall and a line graph of a large wall as vertex-minors is of particular interest.

\subsection*{Acknowledgements} This research was initiated at the \href{https://www.matrix-inst.org.au/events/structural-graph-theory-downunder-ll/}{Structural Graph Theory Downunder II} program of the Mathematical Research Institute MATRIX (March 2022). Thanks to all the participants, especially Marc Distel, for helpful conversations.

\fontsize{10pt}{11pt}
\selectfont
\bibliographystyle{DavidNatbibStyle}
\bibliography{main.bbl}

\def\soft#1{\leavevmode\setbox0=\hbox{h}\dimen7=\ht0\advance \dimen7
  by-1ex\relax\if t#1\relax\rlap{\raise.6\dimen7
  \hbox{\kern.3ex\char'47}}#1\relax\else\if T#1\relax
  \rlap{\raise.5\dimen7\hbox{\kern1.3ex\char'47}}#1\relax \else\if
  d#1\relax\rlap{\raise.5\dimen7\hbox{\kern.9ex \char'47}}#1\relax\else\if
  D#1\relax\rlap{\raise.5\dimen7 \hbox{\kern1.4ex\char'47}}#1\relax\else\if
  l#1\relax \rlap{\raise.5\dimen7\hbox{\kern.4ex\char'47}}#1\relax \else\if
  L#1\relax\rlap{\raise.5\dimen7\hbox{\kern.7ex
  \char'47}}#1\relax\else\message{accent \string\soft \space #1 not
  defined!}#1\relax\fi\fi\fi\fi\fi\fi}
\begin{thebibliography}{79}
\providecommand{\natexlab}[1]{#1}
\providecommand{\msn}[1]{MR:\,\href{http://www.ams.org/mathscinet-getitem?mr=MR{#1}}{#1}}
\providecommand{\ZBL}[1]{Zbl:\,\href{https://www.zentralblatt-math.org/zmath/en/search/?q=an:#1}{#1}}
\providecommand{\url}[1]{\texttt{#1}}
\providecommand{\urlprefix}{}
\expandafter\ifx\csname urlstyle\endcsname\relax
  \providecommand{\doi}[1]{doi:\discretionary{}{}{}#1}\else
  \providecommand{\doi}{doi:\discretionary{}{}{}\begingroup
  \urlstyle{rm}\Url}\fi

\bibitem[{Aboulker et~al.(2021)Aboulker, Adler, Kim, Sintiari, and
  Trotignon}]{AAKST21}
\textsc{Pierre Aboulker, Isolde Adler, Eun~Jung Kim, Ni~Luh~Dewi Sintiari, and
  Nicolas Trotignon}.
\newblock \href{https://doi.org/10.1016/j.ejc.2021.103394}{On the tree-width of
  even-hole-free graphs}.
\newblock \emph{European J. Combin.}, 98:\#103394, 2021.

\bibitem[{Abrishami et~al.(2022{\natexlab{a}})Abrishami, Alecu, Chudnovsky,
  Hajebi, and Spirkl}]{AACHS22}
\textsc{Tara Abrishami, Bogdan Alecu, Maria Chudnovsky, Sepehr Hajebi, and
  Sophie Spirkl}.
\newblock \href{http://arxiv.org/abs/2207.05538}{Induced subgraphs and tree
  decompositions {VII}. {B}asic obstructions in {$H$}-free graphs}.
\newblock 2022{\natexlab{a}}, arXiv:2207.05538.

\bibitem[{Abrishami et~al.(2022{\natexlab{b}})Abrishami, Alecu, Chudnovsky,
  Hajebi, Spirkl, and Vušković}]{AACHSV22}
\textsc{Tara Abrishami, Bogdan Alecu, Maria Chudnovsky, Sepehr Hajebi, Sophie
  Spirkl, and Kristina Vušković}.
\newblock \href{http://arxiv.org/abs/2205.04420}{Induced subgraphs and tree
  decompositions {V}. {O}ne neighbor in a hole}.
\newblock 2022{\natexlab{b}}, arXiv:2205.04420.

\bibitem[{Abrishami et~al.(2021)Abrishami, Chudnovsky, Dibek, Hajebi,
  Rzążewski, Spirkl, and Vušković}]{ACDHRSV21}
\textsc{Tara Abrishami, Maria Chudnovsky, Cemil Dibek, Sepehr Hajebi, Paweł
  Rzążewski, Sophie Spirkl, and Kristina Vušković}.
\newblock \href{http://arxiv.org/abs/2108.01162}{Induced subgraphs and tree
  decompositions {II}. {T}oward walls and their line graphs in graphs of
  bounded degree}.
\newblock 2021, arXiv:2108.01162.

\bibitem[{Abrishami et~al.(2022{\natexlab{c}})Abrishami, Chudnovsky, Hajebi,
  and Spirkl}]{ACHS22}
\textsc{Tara Abrishami, Maria Chudnovsky, Sepehr Hajebi, and Sophie Spirkl}.
\newblock \href{https://doi.org/10.19086/aic.2022.6}{Induced subgraphs and tree
  decompositions {III}. {T}hree-path-configurations and logarithmic treewidth}.
\newblock \emph{Adv. Comb.}, \#6, 2022{\natexlab{c}}.

\bibitem[{Abrishami et~al.(2022{\natexlab{d}})Abrishami, Chudnovsky, Hajebi,
  and Spirkl}]{ACHS22a}
\textsc{Tara Abrishami, Maria Chudnovsky, Sepehr Hajebi, and Sophie Spirkl}.
\newblock \href{http://arxiv.org/abs/2203.06775}{Induced subgraphs and tree
  decompositions {IV}. ({E}ven hole, diamond, pyramid)-free graphs}.
\newblock 2022{\natexlab{d}}, arXiv:2203.06775.

\bibitem[{Abrishami et~al.(2022{\natexlab{e}})Abrishami, Chudnovsky, Hajebi,
  and Spirkl}]{ACHS22b}
\textsc{Tara Abrishami, Maria Chudnovsky, Sepehr Hajebi, and Sophie Spirkl}.
\newblock \href{http://arxiv.org/abs/2207.05538}{Induced subgraphs and tree
  decompositions {VI}. {G}raphs with 2-cutsets}.
\newblock 2022{\natexlab{e}}, arXiv:2207.05538.

\bibitem[{Abrishami et~al.(2022{\natexlab{f}})Abrishami, Chudnovsky, and
  Vu\v{s}kovi\'{c}}]{ACV22}
\textsc{Tara Abrishami, Maria Chudnovsky, and Kristina Vu\v{s}kovi\'{c}}.
\newblock \href{https://doi.org/10.1016/j.jctb.2022.05.009}{Induced subgraphs
  and tree decompositions {I}. {E}ven-hole-free graphs of bounded degree}.
\newblock \emph{J. Combin. Theory Ser. B}, 157:144--175, 2022{\natexlab{f}}.

\bibitem[{Bae et~al.(2018)Bae, Baffier, Chun, Eades, Eickmeyer, Grilli, Hong,
  Korman, Montecchiani, Rutter, and T{\'{o}}th}]{GapPlanar18}
\textsc{Sang~Won Bae, Jean{-}Fran{\c{c}}ois Baffier, Jinhee Chun, Peter Eades,
  Kord Eickmeyer, Luca Grilli, Seok{-}Hee Hong, Matias Korman, Fabrizio
  Montecchiani, Ignaz Rutter, and Csaba~D. T{\'{o}}th}.
\newblock \href{https://doi.org/10.1016/j.tcs.2018.05.029}{Gap-planar graphs}.
\newblock \emph{Theor. Comput. Sci.}, 745:36--52, 2018.

\bibitem[{Bekos et~al.(2017)Bekos, Bruckdorfer, Kaufmann, and
  Raftopoulou}]{BBKR17}
\textsc{Michael~A. Bekos, Till Bruckdorfer, Michael Kaufmann, and Chrysanthi~N.
  Raftopoulou}.
\newblock \href{https://doi.org/10.1007/s00453-016-0203-2}{The book thickness
  of 1-planar graphs is constant}.
\newblock \emph{Algorithmica}, 79(2):444--465, 2017.

\bibitem[{Bodlaender(1988)}]{Bod88}
\textsc{Hans Bodlaender}.
\newblock \href{https://dspace.library.uu.nl/handle/1874/16567}{Planar graphs
  with bounded treewidth}.
\newblock 1988.
\newblock Technical Report RUU-CS-88-14, Department of Computer Science,
  University of Utrecht.

\bibitem[{Bodlaender(1998)}]{Bodlaender98}
\textsc{Hans~L. Bodlaender}.
\newblock \href{https://doi.org/10.1016/S0304-3975(97)00228-4}{A partial
  $k$-arboretum of graphs with bounded treewidth}.
\newblock \emph{Theoret. Comput. Sci.}, 209(1-2):1--45, 1998.

\bibitem[{Bonamy et~al.(2022)Bonamy, Bonnet, D{\'{e}}pr{\'{e}}s, Esperet,
  Geniet, Hilaire, Thomass{\'{e}}, and Wesolek}]{BBDEGHTW}
\textsc{Marthe Bonamy, {\'{E}}douard Bonnet, Hugues D{\'{e}}pr{\'{e}}s, Louis
  Esperet, Colin Geniet, Claire Hilaire, St{\'{e}}phan Thomass{\'{e}}, and
  Alexandra Wesolek}.
\newblock \href{http://arxiv.org/abs/2206.00594}{Sparse graphs with bounded
  induced cycle packing number have logarithmic treewidth}.
\newblock 2022, arXiv:2206.00594.

\bibitem[{Bouchet(1987)}]{Bouchet87}
\textsc{Andr\'{e} Bouchet}.
\newblock \href{https://doi.org/10.1016/S0195-6698(87)80027-6}{Isotropic
  systems}.
\newblock \emph{European J. Combin.}, 8(3):231--244, 1987.

\bibitem[{Bouchet(1988)}]{Bouchet88}
\textsc{Andr\'{e} Bouchet}.
\newblock \href{https://doi.org/10.1016/0095-8956(88)90055-X}{Graphic
  presentations of isotropic systems}.
\newblock \emph{J. Combin. Theory Ser. B}, 45(1):58--76, 1988.

\bibitem[{Campbell et~al.(2022)Campbell, Clinch, Distel, Gollin, Hendrey,
  Hickingbotham, Huynh, Illingworth, Tamitegama, Tan, and Wood}]{UTW}
\textsc{Rutger Campbell, Katie Clinch, Marc Distel, J.~Pascal Gollin, Kevin
  Hendrey, Robert Hickingbotham, Tony Huynh, Freddie Illingworth, Youri
  Tamitegama, Jane Tan, and David~R. Wood}.
\newblock \href{http://arxiv.org/abs/2206.02395}{Product structure of graph
  classes with bounded treewidth}.
\newblock 2022, arXiv:2206.02395.

\bibitem[{Davies(2022{\natexlab{a}})}]{Davies22b}
\textsc{James Davies}.
\newblock \href{https://doi.org/10.14760/OWR-2022-1}{Counterexample to the
  conjecture}.
\newblock In \emph{Graph Theory}, vol.~1 of \emph{Mathematisches
  Forschungsinstitut Oberwolfach Reports}, pp. 62--63. 2022{\natexlab{a}}.

\bibitem[{Davies(2022{\natexlab{b}})}]{Davies22a}
\textsc{James Davies}.
\newblock \href{https://doi.org/10.1090/proc/16044}{Improved bounds for
  colouring circle graphs}.
\newblock \emph{Proc. Amer. Math. Soc.}, 150:5121--5135, 2022{\natexlab{b}}.

\bibitem[{Davies(2022{\natexlab{c}})}]{Davies22}
\textsc{James Davies}.
\newblock \href{http://hdl.handle.net/10012/18679}{Local properties of graphs
  with large chromatic number}.
\newblock Ph.D. thesis, University of Waterloo, 2022{\natexlab{c}}.

\bibitem[{Davies and McCarty(2021)}]{DM21}
\textsc{James Davies and Rose McCarty}.
\newblock \href{https://doi.org/10.1112/blms.12447}{Circle graphs are
  quadratically {$\chi$}-bounded}.
\newblock \emph{Bull. Lond. Math. Soc.}, 53(3):673--679, 2021.

\bibitem[{Dawar et~al.(2007)Dawar, Grohe, and Kreutzer}]{DGK-LICS}
\textsc{Anuj Dawar, Martin Grohe, and Stephan Kreutzer}.
\newblock \href{https://doi.org/10.1109/LICS.2007.31}{Locally excluding a
  minor}.
\newblock In \emph{Proc. 22nd Annual IEEE Symp. on Logic in Computer Science
  \textup{(LICS '07)}}, pp. 270--279. IEEE, 2007.

\bibitem[{de~Fraysseix(1984)}]{dF84}
\textsc{Hubert de~Fraysseix}.
\newblock \href{https://doi.org/10.1016/S0195-6698(84)80005-0}{A
  characterization of circle graphs}.
\newblock \emph{European J. Combin.}, 5(3):223--238, 1984.

\bibitem[{Diestel(2018)}]{Diestel5}
\textsc{Reinhard Diestel}.
\newblock Graph theory, vol. 173 of \emph{Graduate Texts in Mathematics}.
\newblock Springer, 5th edn., 2018.

\bibitem[{Ding and Oporowski(1995)}]{DO95}
\textsc{Guoli Ding and Bogdan Oporowski}.
\newblock \href{https://doi.org/10.1002/jgt.3190200412}{Some results on tree
  decomposition of graphs}.
\newblock \emph{J. Graph Theory}, 20(4):481--499, 1995.

\bibitem[{Distel and Wood(2022)}]{DW}
\textsc{Marc Distel and David~R. Wood}.
\newblock \href{http://arxiv.org/abs/2210.12577}{Tree-partitions with bounded
  degree trees}.
\newblock 2022, arXiv:2210.12577.

\bibitem[{Dujmovic et~al.(2022)Dujmovic, Eppstein, Hickingbotham, Morin, and
  Wood}]{DEHMW22}
\textsc{Vida Dujmovic, David Eppstein, Robert Hickingbotham, Pat Morin, and
  David~R. Wood}.
\newblock \href{https://doi.org/10.1007/s00493-021-4585-7}{Stack-number is not
  bounded by queue-number}.
\newblock \emph{Combinatorica}, 42(2):151--164, 2022.

\bibitem[{Dujmovi\'c et~al.(2017)Dujmovi\'c, Eppstein, and Wood}]{DEW17}
\textsc{Vida Dujmovi\'c, David Eppstein, and David~R. Wood}.
\newblock \href{https://doi.org/10.1137/16M1062879}{Structure of graphs with
  locally restricted crossings}.
\newblock \emph{SIAM J. Discrete Math.}, 31(2):805--824, 2017.

\bibitem[{Dujmovi{\'c} et~al.(2017)Dujmovi{\'c}, Morin, and Wood}]{DMW17}
\textsc{Vida Dujmovi{\'c}, Pat Morin, and David~R. Wood}.
\newblock \href{https://doi.org/10.1016/j.jctb.2017.05.006}{Layered separators
  in minor-closed graph classes with applications}.
\newblock \emph{J. Combin. Theory Ser. B}, 127:111--147, 2017.

\bibitem[{Dujmovi{\'c} et~al.(2019)Dujmovi{\'c}, Morin, and Wood}]{DMW}
\textsc{Vida Dujmovi{\'c}, Pat Morin, and David~R. Wood}.
\newblock \href{http://arxiv.org/abs/1907.05168}{Graph product structure for
  non-minor-closed classes}.
\newblock 2019, arXiv:1907.05168.

\bibitem[{Dujmovi\'{c} and Wood(2007)}]{DujWoo07}
\textsc{Vida Dujmovi\'{c} and David~R. Wood}.
\newblock \href{https://doi.org/10.1007/s00454-007-1318-7}{Graph treewidth and
  geometric thickness parameters}.
\newblock \emph{Discrete Comput. Geom.}, 37(4):641--670, 2007.

\bibitem[{Dur\'{a}n et~al.(2014)Dur\'{a}n, Grippo, and Safe}]{DGS14}
\textsc{Guillermo Dur\'{a}n, Luciano~N. Grippo, and Mart\'{\i}n~D. Safe}.
\newblock \href{https://doi.org/10.1016/j.dam.2012.12.021}{Structural results
  on circular-arc graphs and circle graphs: {A} survey and the main open
  problems}.
\newblock \emph{Discrete Appl. Math.}, 164:427--443, 2014.

\bibitem[{Eppstein(2000)}]{Eppstein-Algo00}
\textsc{David Eppstein}.
\newblock \href{https://doi.org/10.1007/s004530010020}{Diameter and treewidth
  in minor-closed graph families}.
\newblock \emph{Algorithmica}, 27(3--4):275--291, 2000.

\bibitem[{Eppstein and Gupta(2017)}]{EG17}
\textsc{David Eppstein and Siddharth Gupta}.
\newblock \href{https://doi.org/10.1145/3139958.3139999}{Crossing patterns in
  nonplanar road networks}.
\newblock In \emph{Proc. 25th {ACM} {SIGSPATIAL} Int'l Conf. on Advances in
  Geographic Information Systems}, pp. 40:1--9. 2017.

\bibitem[{Gansner and Koren(2006)}]{GK06}
\textsc{Emden~R. Gansner and Yehuda Koren}.
\newblock \href{https://doi.org/10.1007/978-3-540-70904-6\_37}{Improved
  circular layouts}.
\newblock In \textsc{Michael Kaufmann and Dorothea Wagner}, eds., \emph{Proc.
  14th International Symposium on Graph Drawing \textup{({GD} 2006)}}, vol.
  4372 of \emph{Lecture Notes in Comput. Sci.}, pp. 386--398. Springer, 2006.

\bibitem[{Geelen et~al.(2023)Geelen, Kwon, McCarty, and Wollan}]{GKMW23}
\textsc{Jim Geelen, O-joung Kwon, Rose McCarty, and Paul Wollan}.
\newblock \href{https://doi.org/10.1016/j.jctb.2020.08.004}{The grid theorem
  for vertex-minors}.
\newblock \emph{J. Combin. Theory, Ser. B}, 158:93--116, 2023.

\bibitem[{Grigoriev and Bodlaender(2007)}]{GB07}
\textsc{Alexander Grigoriev and Hans~L. Bodlaender}.
\newblock \href{https://doi.org/10.1007/s00453-007-0010-x}{Algorithms for
  graphs embeddable with few crossings per edge}.
\newblock \emph{Algorithmica}, 49(1):1--11, 2007.

\bibitem[{Gy{\'a}rf{\'a}s(1985)}]{Gyarfas-DM85}
\textsc{Andr{\'a}s Gy{\'a}rf{\'a}s}.
\newblock \href{https://doi.org/10.1016/0012-365X(85)90044-5}{On the chromatic
  number of multiple interval graphs and overlap graphs}.
\newblock \emph{Discrete Math.}, 55(2):161--166, 1985.

\bibitem[{Halton(1991)}]{Halton91}
\textsc{John~H. Halton}.
\newblock \href{https://doi.org/10.1016/0020-0255(91)90052-V}{On the thickness
  of graphs of given degree}.
\newblock \emph{Inform. Sci.}, 54(3):219--238, 1991.

\bibitem[{Harvey and Wood(2017)}]{HW17}
\textsc{Daniel~J. Harvey and David~R. Wood}.
\newblock \href{https://doi.org/10.1002/jgt.22030}{Parameters tied to
  treewidth}.
\newblock \emph{J. Graph Theory}, 84(4):364--385, 2017.

\bibitem[{Hickingbotham(2022)}]{Hickingbotham22}
\textsc{Robert Hickingbotham}.
\newblock \href{http://arxiv.org/abs/2206.15054}{Induced subgraphs and path
  decompositions}.
\newblock 2022, arXiv:2206.15054.

\bibitem[{Hickingbotham and Wood(2021)}]{HW21b}
\textsc{Robert Hickingbotham and David~R. Wood}.
\newblock \href{http://arxiv.org/abs/2111.12412}{Shallow minors, graph products
  and beyond planar graphs}.
\newblock 2021, arXiv:2111.12412.

\bibitem[{Hlinen{\'{y}} et~al.(2008)Hlinen{\'{y}}, Oum, Seese, and
  Gottlob}]{HOSG08}
\textsc{Petr Hlinen{\'{y}}, Sang{-}il Oum, Detlef Seese, and Georg Gottlob}.
\newblock \href{https://doi.org/10.1093/comjnl/bxm052}{Width parameters beyond
  tree-width and their applications}.
\newblock \emph{Comput. J.}, 51(3):326--362, 2008.

\bibitem[{Hobbs(1969)}]{Hobbs69}
\textsc{Arthur~M. Hobbs}.
\newblock A survey of thickness.
\newblock In \emph{Recent Progress in Combinatorics: Proc. 3rd Waterloo Conf.
  on Combinatorics}, pp. 255--264. Academic Press, 1969.

\bibitem[{Huynh and Wood(2021)}]{HW21c}
\textsc{Tony Huynh and David~R. Wood}.
\newblock \href{https://doi.org/10.4153/S0008414X21000316}{Tree densities of
  sparse graph classes}.
\newblock \emph{Canadian J. Math.}, 2021.

\bibitem[{Kaufmann et~al.(2020)Kaufmann, Bekos, Klute, Pupyrev, Raftopoulou,
  and Ueckerdt}]{MBKPRU20}
\textsc{Michael Kaufmann, Michael~A. Bekos, Fabian Klute, Sergey Pupyrev,
  Chrysanthi~N. Raftopoulou, and Torsten Ueckerdt}.
\newblock \href{https://doi.org/10.20382/jocg.v11i1a12}{Four pages are indeed
  necessary for planar graphs}.
\newblock \emph{J. Comput. Geom.}, 11(1):332--353, 2020.

\bibitem[{Kleitman(1970)}]{Kleitman70}
\textsc{Daniel~J. Kleitman}.
\newblock \href{https://doi.org/10.1016/S0021-9800(70)80087-4}{The crossing
  number of {$K_{5,n}$}}.
\newblock \emph{J. Combinatorial Theory}, 9:315--323, 1970.

\bibitem[{Kloks(1996)}]{Kloks96}
\textsc{Ton Kloks}.
\newblock \href{https://doi.org/10.1142/S0129054196000099}{Treewidth of circle
  graphs}.
\newblock \emph{Int. J. Found. Comput. Sci.}, 7(2):111--120, 1996.

\bibitem[{Klute and N{\"{o}}llenburg(2019)}]{KN19}
\textsc{Fabian Klute and Martin N{\"{o}}llenburg}.
\newblock \href{https://doi.org/10.20382/jocg.v10i2a4}{Minimizing crossings in
  constrained two-sided circular graph layouts}.
\newblock \emph{J. Comput. Geom.}, 10(2):45--69, 2019.

\bibitem[{Korhonen(2022)}]{Kor22}
\textsc{Tuukka Korhonen}.
\newblock \href{http://arxiv.org/abs/2203.13233}{Grid induced minor theorem for
  graphs of small degree}.
\newblock 2022, arXiv:2203.13233.

\bibitem[{Kostochka and Kratochv{\'{\i}}l(1997)}]{KK97}
\textsc{Alexandr Kostochka and Jan Kratochv{\'{\i}}l}.
\newblock \href{https://doi.org/10.1016/S0012-365X(96)00344-5}{Covering and
  coloring polygon-circle graphs}.
\newblock \emph{Discrete Math.}, 163(1--3):299--305, 1997.

\bibitem[{Lozin and Razgon(2022)}]{LR22}
\textsc{Vadim Lozin and Igor Razgon}.
\newblock \href{https://doi.org/10.1016/j.ejc.2022.103517}{Tree-width
  dichotomy}.
\newblock \emph{European J. Combin.}, 103:\# 103517, 2022.

\bibitem[{Mader(1967)}]{Mader67}
\textsc{Wolfang Mader}.
\newblock \href{https://doi.org/10.1007/BF01364272}{Homomorphieeigenschaften
  und mittlere {K}antendichte von {G}raphen}.
\newblock \emph{Math. Ann.}, 174:265--268, 1967.

\bibitem[{Mader(1968)}]{Mader68}
\textsc{Wolfgang Mader}.
\newblock \href{https://doi.org/10.1007/BF01350657}{Homomorphies\"atze f\"ur
  {G}raphen}.
\newblock \emph{Math. Ann.}, 178:154--168, 1968.

\bibitem[{McCarty(2021)}]{McCarty21}
\textsc{Rose McCarty}.
\newblock \href{http://hdl.handle.net/10012/17633}{Local structure for
  vertex-minors}.
\newblock Ph.D. thesis, University of Waterloo, 2021.

\bibitem[{Mutzel et~al.(1998)Mutzel, Odenthal, and Scharbrodt}]{MOS98}
\textsc{Petra Mutzel, Thomas Odenthal, and Mark Scharbrodt}.
\newblock \href{https://doi.org/10.1007/PL00007219}{The thickness of graphs: a
  survey}.
\newblock \emph{Graphs Combin.}, 14(1):59--73, 1998.

\bibitem[{Oum(2005)}]{Oum05}
\textsc{Sang-il Oum}.
\newblock \href{https://doi.org/10.1016/j.jctb.2005.03.003}{Rank-width and
  vertex-minors}.
\newblock \emph{J. Combin. Theory Ser. B}, 95(1):79--100, 2005.

\bibitem[{Oum(2017)}]{Oum17}
\textsc{Sang-il Oum}.
\newblock \href{https://doi.org/10.1016/j.dam.2016.08.006}{Rank-width:
  algorithmic and structural results}.
\newblock \emph{Discrete Appl. Math.}, 231:15--24, 2017.

\bibitem[{Oum and Seymour(2006)}]{OS06}
\textsc{Sang-il Oum and Paul Seymour}.
\newblock \href{https://doi.org/10.1016/j.jctb.2005.10.006}{Approximating
  clique-width and branch-width}.
\newblock \emph{J. Combin. Theory Ser. B}, 96(4):514--528, 2006.

\bibitem[{Pach and T{\'o}th(1997)}]{PachToth97}
\textsc{J{\'a}nos Pach and G{\'e}za T{\'o}th}.
\newblock \href{https://doi.org/10.1007/BF01215922}{Graphs drawn with few
  crossings per edge}.
\newblock \emph{Combinatorica}, 17(3):427--439, 1997.

\bibitem[{Pach and T{\'o}th(2000)}]{PT00}
\textsc{J{\'a}nos Pach and G{\'e}za T{\'o}th}.
\newblock \href{https://doi.org/10.1006/jctb.2000.1978}{Which crossing number
  is it anyway?}
\newblock \emph{J. Combin. Theory Ser. B}, 80(2):225--246, 2000.

\bibitem[{Pach and Wenger(2001)}]{PW01}
\textsc{J\'{a}nos Pach and Rephael Wenger}.
\newblock \href{https://doi.org/10.1007/PL00007258}{Embedding planar graphs at
  fixed vertex locations}.
\newblock \emph{Graphs Combin.}, 17(4):717--728, 2001.

\bibitem[{Pilipczuk et~al.(2021)Pilipczuk, Sintiari, Thomass{\'{e}}, and
  Trotignon}]{PSTT21}
\textsc{Marcin Pilipczuk, Ni~Luh~Dewi Sintiari, St{\'{e}}phan Thomass{\'{e}},
  and Nicolas Trotignon}.
\newblock \href{https://doi.org/10.1002/jgt.22675}{({T}heta, triangle)-free and
  (even hole, {$K_4$})-free graphs. {P}art 2: {B}ounds on treewidth}.
\newblock \emph{J. Graph Theory}, 97(4):624--641, 2021.

\bibitem[{Pohoata(2014)}]{Pohoata14}
\textsc{Cosmin Pohoata}.
\newblock
  \href{http://arks.princeton.edu/ark:/88435/dsp012514nk67q}{Unavoidable
  induced subgraphs of large graphs}.
\newblock Senior thesis, Department of Mathematics, Princeton University, 2014.

\bibitem[{Pupyrev(2020)}]{Pupyrev20a}
\textsc{Sergey Pupyrev}.
\newblock \href{http://arxiv.org/abs/2007.15102}{Book embeddings of graph
  products}.
\newblock 2020, arXiv:2007.15102.

\bibitem[{Reed(2003)}]{Reed03}
\textsc{Bruce~A. Reed}.
\newblock \href{https://doi.org/10.1007/0-387-22444-0\_4}{Algorithmic aspects
  of tree width}.
\newblock In \emph{Recent advances in algorithms and combinatorics}, vol.~11,
  pp. 85--107. Springer, 2003.

\bibitem[{Robertson and Seymour(1984)}]{RS-III}
\textsc{Neil Robertson and Paul Seymour}.
\newblock \href{https://doi.org/10.1016/0095-8956(84)90013-3}{Graph minors.
  {III}. {P}lanar tree-width}.
\newblock \emph{J. Combin. Theory Ser. B}, 36(1):49--64, 1984.

\bibitem[{Schaefer(2018)}]{Schaefer18}
\textsc{Marcus Schaefer}.
\newblock Crossing numbers of graphs.
\newblock CRC, 2018.

\bibitem[{Schaefer(2022)}]{Schaefer22}
\textsc{Marcus Schaefer}.
\newblock \href{https://www.combinatorics.org/DS21}{The graph crossing number
  and its variants: A survey}.
\newblock \emph{Electron. J. Combin.}, \#DS21, 2022.

\bibitem[{Scott and Seymour(2020)}]{SS20}
\textsc{Alex Scott and Paul Seymour}.
\newblock \href{https://doi.org/10.1002/jgt.22601}{A survey of
  {$\chi$}-boundedness}.
\newblock \emph{J. Graph Theory}, 95(3):473--504, 2020.

\bibitem[{Shahrokhi et~al.(2003)Shahrokhi, S{\'{y}}kora, Sz{\'{e}}kely, and
  Vrt'o}]{SSSV03}
\textsc{Farhad Shahrokhi, Ondrej S{\'{y}}kora, L{\'{a}}szl{\'{o}}~A.
  Sz{\'{e}}kely, and Imrich Vrt'o}.
\newblock \href{https://doi.org/10.1007/3-540-45071-8\_49}{Bounds for convex
  crossing numbers}.
\newblock In \textsc{Tandy~J. Warnow and Binhai Zhu}, eds., \emph{Computing and
  combinatorics \textup{(COCOON '03)}}, vol. 2697 of \emph{Lecture Notes in
  Comput. Sci.}, pp. 487--495. Springer, 2003.

\bibitem[{Shahrokhi et~al.(2004)Shahrokhi, S{\'y}kora, Sz{\'e}kely, and
  Vrt'o}]{SSSV04}
\textsc{Farhad Shahrokhi, Ondrej S{\'y}kora, Laszlo~A. Sz{\'e}kely, and Imrich
  Vrt'o}.
\newblock \href{https://doi.org/10.1090/conm/342/06145}{The gap between
  crossing numbers and convex crossing numbers}.
\newblock In \emph{Towards a theory of geometric graphs}, vol. 342 of
  \emph{Contemp. Math.}, pp. 249--258. Amer. Math. Soc., 2004.

\bibitem[{Sintiari and Trotignon(2021)}]{ST21}
\textsc{Ni~Luh~Dewi Sintiari and Nicolas Trotignon}.
\newblock \href{https://doi.org/10.1002/jgt.22666}{({T}heta, triangle)-free and
  (even hole, {$K_4$})-free graphs---{P}art 1: {L}ayered wheels}.
\newblock \emph{J. Graph Theory}, 97(4):475--509, 2021.

\bibitem[{Six and Tollis(1999)}]{ST99}
\textsc{Janet~M. Six and Ioannis~G. Tollis}.
\newblock \href{https://doi.org/10.1007/3-540-46648-7\_11}{A framework for
  circular drawings of networks}.
\newblock In \textsc{Jan Kratochv{\'{\i}}l}, ed., \emph{Proc. 7th International
  Symposium on Graph Drawing \textup{(GD'99)}}, vol. 1731 of \emph{Lecture
  Notes in Comput. Sci.}, pp. 107--116. Springer, 1999.

\bibitem[{Sz{\'e}kely(2004)}]{Szekely04}
\textsc{L{\'a}szl{\'o}~A. Sz{\'e}kely}.
\newblock \href{https://doi.org/10.1016/S0012-365X(03)00317-0}{A successful
  concept for measuring non-planarity of graphs: the crossing number}.
\newblock \emph{Discrete Math.}, 276(1--3):331--352, 2004.

\bibitem[{Tutte(1963)}]{Tutte63a}
\textsc{William~T. Tutte}.
\newblock The thickness of a graph.
\newblock \emph{Nederl. Akad. Wetensch. Proc. Ser. A 66=Indag. Math.},
  25:567--577, 1963.

\bibitem[{Wood(2009)}]{Wood09}
\textsc{David~R. Wood}.
\newblock \href{https://doi.org/10.1016/j.ejc.2008.11.010}{On
  tree-partition-width}.
\newblock \emph{European J. Combin.}, 30(5):1245--1253, 2009.

\bibitem[{Wood and Telle(2007)}]{WT07}
\textsc{David~R. Wood and Jan~Arne Telle}.
\newblock \href{http://nyjm.albany.edu/j/2007/13-8.html}{Planar decompositions
  and the crossing number of graphs with an excluded minor}.
\newblock \emph{New York J. Math.}, 13:117--146, 2007.

\bibitem[{Yannakakis(1989)}]{Yann89}
\textsc{Mihalis Yannakakis}.
\newblock \href{https://doi.org/10.1016/0022-0000(89)90032-9}{Embedding planar
  graphs in four pages}.
\newblock \emph{J. Comput. System Sci.}, 38(1):36--67, 1989.

\bibitem[{Yannakakis(2020)}]{Yann20}
\textsc{Mihalis Yannakakis}.
\newblock \href{https://doi.org/10.1016/j.jctb.2020.05.008}{Planar graphs that
  need four pages}.
\newblock \emph{J. Combin. Theory Ser. B}, 145:241--263, 2020.

\end{thebibliography}
\end{document}